\documentclass[a4paper,reqno,12pt]{amsart}
\textwidth 16cm
\topmargin 0cm
\oddsidemargin 0cm
\evensidemargin 0cm
\setlength{\textheight}{658pt}
\parskip 0.2cm
%row=ligne
%column=colone
%\usepackage{pdfsync}
\usepackage{rotating,pdflscape,color,amssymb,hyperref,booktabs,subfigure,psfrag,amsmath,eufrak,bbm,multirow}
\usepackage[colorinlistoftodos]{todonotes}
\author[Sheehan Olver]{Sheehan Olver}\address{Sheehan Olver, School of Mathematics and Statistics, The University of Sydney, Australia.}
\email{Sheehan.Olver@sydney.edu.au}
\urladdr{http://www.maths.usyd.edu.au/u/olver/}

\author[Raj Rao Nadakuditi]{Raj Rao Nadakuditi}\address{Raj Rao Nadakuditi, Department of Electrical Engineering and Computer Science, University of Michigan, 1301 Beal Avenue, Ann Arbor, MI 48109. USA.}
\email{rajnrao@eecs.umich.edu}
\urladdr{http://www.eecs.umich.edu/\~\/rajnrao/}

\title[Free probability calculator]{Numerical computation of convolutions in free probability theory}
\keywords{Random matrices, free probability}
\subjclass[2000]{15A52, 46L54, 60F99} %rand mat
%60B15, %pro measures on groups, fourier transf, factorisation
%60F05, %TCL
%46L54}%free proba

\date{\today}

\newcommand{\bxp}{\boxplus}
\newcommand{\bxt}{\boxtimes}
\newcommand{\bxd}{\boxdot}

\newtheorem{Th}{Theorem}[section]

\newtheorem{propo}[Th]{Proposition}

\newtheorem{lem}[Th]{Lemma}

\newtheorem{cor}[Th]{Corollary}

\long\def\symbolfootnote[#1]#2{\begingroup
\def\thefootnote{\fnsymbol{footnote}}\footnote[#1]{#2}\endgroup}

\theoremstyle{definition}
\newtheorem{Def}[Th]{Definition}

\newcommand{\ud}{\mathrm{d}}
\newcommand{\pro}{probability }

\newcommand{\f}{\dfrac}

\newcommand{\supp}{\operatorname{supp}}

\def\C{{\mathbb C}}

\definecolor{DarkGreen}{rgb}{0,.55,0}

%\addtolength{\oddsidemargin}{.4in}
%\addtolength{\evensidemargin}{.4in}
\newcount\subitemcount
\newcount\itemcount
\theoremstyle{definition}
\newtheorem{remark}{Remark}
\newtheorem{algorithm}{Algorithm}
\def\algref#1{Algorithm~\ref{alg:#1}}
\def\Algorithm#1#2\par{
		\def\given{\vskip .1in\par Given }
		\def\return##1;{ compute ##1 as follows:}		
		\def\step{\vskip .1in \par {\bf \the\itemcount: \quad} \advance\itemcount by 1 \subitemcount=1}
		\def\substep{\vskip .1in\par$\qquad${\tt \the\subitemcount: \quad} \advance\subitemcount by 1}	
	\begin{algorithm}\label{alg:#1}
		\itemcount=1		
		#2
	\end{algorithm}
}

\newcommand{\smallbox}[1]{\mbox{\small$\displaystyle #1$}}
\newcommand{\tinybox}[1]{\mbox{\tiny$\displaystyle #1$}}

\def\sgn{{\rm sgn}\,}

\begin{document}

\begin{abstract}
We develop a numerical approach for computing the additive, multiplicative and compressive convolution operations from free probability theory.  We utilize the regularity properties of free convolution to identify (pairs of) `admissible' measures whose convolution results in a so-called `invertible measure' which is either a smoothly-decaying measure supported on the entire real line (such as the Gaussian) or square-root decaying measure supported on a compact interval (such as the semi-circle). This class of measures is important because these measures along with their Cauchy transforms can be accurately represented via a Fourier or Chebyshev series expansion, respectively.  Thus, knowledge of the functional inverse of their Cauchy transform suffices for numerically recovering the invertible measure via a non-standard yet well-behaved Vandermonde system of equations. We describe explicit algorithms for computing the  inverse Cauchy transform alluded to and recovering the associated measure with spectral accuracy.  Convergence is guaranteed under broad assumptions on the input measures.  
%\tableofcontents
\end{abstract}

\maketitle
\section{Introduction}

\newcommand{\minitab}[2][l]{\begin{tabular}{#1}#2\end{tabular}}
\setlength{\heavyrulewidth}{0.1em}
\newcommand{\otoprule}{\midrule[\heavyrulewidth]}
\renewcommand*\arraystretch{1.5}

\def\Xint#1{\mathchoice
{\XXint\displaystyle\textstyle{#1}}%
{\XXint\textstyle\scriptstyle{#1}}%
{\XXint\scriptstyle\scriptscriptstyle{#1}}%
{\XXint\scriptscriptstyle\scriptscriptstyle{#1}}%
\!\int}
\def\XXint#1#2#3{{\setbox0=\hbox{$#1{#2#3}{\int}$ }
\vcenter{\hbox{$#2#3$ }}\kern-.6\wd0}}
\def\ddashint{\Xint=}
\def\dashint{\Xint-}

\def\E{e}\def\I{i}

We propose a powerful method that allows us to numerically calculate the `free' \cite{vdn91} additive, multiplicative and compressive convolution of a large class of probability measures. We see this method as complementing the symbolic techniques previously developed in \cite{re08} for so-called algebraic measures, \textit{i.e.}, measures whose Cauchy transforms are algebraic.

Using the method developed in this paper, we can, for example, compute \textit{with spectral accuracy} the free additive convolution of the semi-circle and the Gaussian which arises in \cite{bryc2006spectral}  (see Figure \ref{SemiPlusGauss}); or the free compression of the Gaussian which arises  in \cite{belinschi2010normal} (see Figure \ref{GaussCompress}); or even  the free additive convolution of the Gaussian with the counting measure on a single realization of a Gaussian Orthogonal Ensemble as a way to get insight on the rate of convergence to the asymptotic result in \cite{bryc2006spectral} (see Figures \ref{SemiPlusGauss} \& \ref{DSemiPlusGauss}). We go well beyond these simple examples and hope that the proposed method allows practitioners to experiment with free probability convolutions so that  they may find new applications of the underlying theory.

We consider the  free convolution operations on measures $\mu_A$ and $\mu_B$ (and compression factor $\alpha \in (0,1)$) listed in the first column of Table \ref{tab:operations}.
Each operation takes in one or two measures, and returns a new measure.  What is known in each case is a relationship in transform space; i.e., there are transforms \cite{voiculescu1986addition,voiculescu1987multiplication,vdn91,nica1996multiplication} $R_\mu(y)$ and $S_\mu(y)$ so that the convolution operation can be represented simply as in the second column of Table \ref{tab:operations}.

The {\it Cauchy transform} of a measure $\mu$ on the real line is defined as:  $$G_\mu(z)=\int\f{\ud \mu(x)}{z-x}  \qquad \textrm{for } z \notin \supp \mu.$$
The key observation is that each transform $R_\mu$ and $S_\mu$ can be expressed in terms of the functional inverse of the Cauchy transform $G_\mu^{-1}$  (which we refer to as the {\em inverse Cauchy transform}).  Therefore, we reduce the problem to the following two subtasks:

\begin{enumerate}
	\item calculate the inverse Cauchy transform of the input measures pointwise; and
	\item recover the output measure from knowledge of its inverse Cauchy transform.
\end{enumerate}

\subsection{Types of measures, their utility and the key underlying idea}

\begin{table}
\centering
\begin{tabular}{p{3.55cm}ll}
\otoprule
\makebox[3.25cm][c]{{\bf Operation}} &  \makebox[4.55cm][c]{{\bf Transform Operation}} &  \makebox[4.25cm][c]{{\bf Key Transform}}\\
\otoprule
%\multirow{3}{*}{}
\multirow{2}*[-1ex]{\smallbox{\minitab[c]{Free Addition \\ $\mu_{C} = \mu_{A} \boxplus \mu_{B}$}}} &
\multirow{2}*[-2.25ex]{\smallbox{\minitab[l]{$R_{\mu_C}(y) = R_{\mu_A}(y) + R_{\mu_B}(y)$}}}
& \multirow{2}*[0ex]{\smallbox{\minitab[l]{$G_{\mu}(z) = \displaystyle \int\dfrac{\ud \mu(x)}{z-x},$ \\[0.35cm]
$R_{\mu}(y) = G_{\mu}^{-1}(y) - \dfrac{1}{y}$}}}\\[1.50cm]

\midrule
\multirow{2}*[-1ex]{\smallbox{\minitab[c]{Free Multiplication \\ $\mu_{C} = \mu_{A} \boxtimes \mu_{B}$}}} &
\multirow{1}*[-2.25ex]{\smallbox{\minitab[l]{$S_{\mu_C}(y) = S_{\mu_A}(y) \, S_{\mu_B}(y)$}}}
& \multirow{2}*[0ex]{\smallbox{\minitab[l]{$T_{\mu}(z) = \displaystyle \int\dfrac{x \ud \mu(x)}{z-x},$ \\[0.35cm]
$S_{\mu}(y) = \dfrac{1+y}{y} \cdot \dfrac{1}{T_{\mu}^{-1}(y)}$}}}\\[1.50cm]
\midrule
\multirow{2}*[-1ex]{\minitab[c]{Free Compression \\ $\mu_{C} = \alpha \bxd \mu_A$}} &
\multirow{1}*[-2.25ex]{\smallbox{\minitab[l]{$R_{\mu_C}(y) = R_{\mu_A}(\alpha y) $}}}
& \multirow{2}*[0ex]{\smallbox{\minitab[l]{$G_{\mu}(z) = \displaystyle  \int\dfrac{\ud \mu(x)}{z-x},$\\[0.35cm]
$R_{\mu}(y) = G_{\mu}^{-1}(y) - \dfrac{1}{y}$}}}\\[1.50cm]
\otoprule
\end{tabular}
\caption{Free convolution operations considered in this paper.}
\label{tab:operations}
\end{table}

We will focus on the following types of measures:

\begin{Def}
	A measure $\mu$ is a {\it smoothly decaying measure} if it has the form
	 $$\ud\mu(x) = \psi(x)  \ud x,$$
where $\psi \in C^1(-\infty,\infty)$, $\psi'$ has  bounded variation and $\psi(x) = {\alpha \over x} + O(x^{-2})$ as $x \rightarrow \pm \infty$ for some constant $\alpha$.   A {\it Schwartz measure} is a smoothly decaying measure such that $\psi$ is Schwartz: $\psi \in C^\infty(-\infty,\infty)$ and 
	$$\int_{-\infty}^\infty \psi(x) |x|^k dx < \infty$$
for $k = 0,1,2,\ldots$. 
\end{Def}

\begin{Def}\label{JacobiMeasureDef}
	A measure $\mu$ is a {\it Jacobi measure} if it has the form
	$$d \mu = \psi(x) (x-a)^\alpha (b - x)^\beta dx$$
where $\alpha,\beta > -1$,  $\psi \in C^1[a,b]$ and $\psi'$ has bounded variation.   $\mu$ is {\it precisely a Jacobi measure} if it is a Jacobi measure such that $\psi(a),\psi(b) \neq 0$.
	A {\it square root decaying measure}  is a Jacobi measure with $\alpha = \beta = {1 \over 2}$.  
\end{Def}

\begin{Def}
	A measure $\mu$ is a {\it half square root/smoothly decaying measure} if it is supported on $(a,\infty)$ (similarly, $(-\infty,b)$ and has the form
	$$d \mu = \psi(x) \sqrt{x-a}dx$$
where $\psi \in C^1[a,\infty)$, $\psi'$ has  bounded variation and $\psi(x) = {\beta \over x} + O(x^{-2})$ as $x \rightarrow + \infty$ for some constant $\beta$. A measure is {\it precisely half square root decaying/Schwartz} if $\psi \in C^\infty[a,\infty)$, $\psi(a) \neq 0$ and 
	$$\int_{a}^\infty \psi(x) \sqrt{x - a} |x|^k dx < \infty$$
for $k = 0,1,\ldots$
\end{Def}

In this paper, the class of \textit{admissible measures} (see Section \ref{InvC}) are measures for which the inverse Cauchy transform (and hence the $R$ or $S$ transforms) can be accurately computed pointwise on an appropriate domain:
	
\begin{Def}
	A measure is {\it admissible} if it is of the following type:
\begin{enumerate}
	\item smoothly decaying measures,
	\item Jacobi measures,
	\item half square root/smooth decaying measures,
	\item point measures or
	\item finite combinations of the above with compact support.
\end{enumerate}
\end{Def}
The class of \textit{invertible measures} are a subset of the class of admissible measures:
\begin{Def}
	A measure is {\it invertible} if its Cauchy transform is single-valued off its support and it is one of the following:
\begin{itemize}
\item precisely square-root decaying measure,
\item Schwartz measure, or
\item half precisely square-root/Schwartz measure.
\end{itemize}
\end{Def}

Invertible measures are a class of measures for which we can guarantee \textit{recovery} of the output measure accurately from knowledge of its inverse Cauchy transform.  The theory of Section \ref{regularity} states  broad conditions for which  the result of a free probability operation is an invertible measure.    The utility of the invertible measures can be discerned from Table \ref{tab:rep of measures}.

The key idea behind the proposed method is that invertible measures that are represented via a Chebyshev or Fourier series expansion as in the second column of Table \ref{tab:rep of measures}, have Cauchy transforms whose series expansions are closely related, as listed in the third column of Table \ref{tab:rep of measures}. Thus, given a series truncation, we can efficiently compute the inverse Cauchy transform $G_{\mu}^{-1}(y_{i})$ at $y_{i}$ by a companion matrix method. This knowledge of the inverse Cauchy transform $G_{\mu}^{-1}(y_i)$ at points $\{y_{i}\}_{i=1}^{m}$ coupled with the relationship (valid for $y$ in the image of $G_\mu$ for invertible measures):
$$G_{\mu}(G_{\mu}^{-1}(y)) = y,$$
implies that the desired series expansion coefficients $\{\psi_{k}\}_{i=1}^{n}$ for the Cauchy transform representation in the third column of Table \ref{tab:rep of measures} can be recovered by solving the Vandermonde system defined by:
$$ G_{\mu}(G_{\mu}^{-1}(y_{i})) = y_{i} \qquad \textrm{ for } i = 1, \ldots, m > n.$$
This yields  \algref{smoothlydecaying} and \algref{sqrtsupp} for smoothly-decaying and square-root decaying measures, respectively. Choosing $n$ and $m$ appropriately yields the desired level of accuracy. Once these expansion coefficients are computed, we recover the measure $\mu$ via the series expansion in the second column of Table \ref{tab:rep of measures}.

The recognition that the class of invertible measures has a nice series representation for \textit{both} the measure and its Cauchy transform is an important ingredient of the method; this insight originated in \cite{SOHilbertTransform} and might be of independent interest to free probabilists. Representing the measures via another basis that yields a sparser series representation of the measure but that does not yield a sparse, directly computable and invertible, series representation of Cauchy transform does not lead to an algorithm for computing the inverse Cauchy transform, thereby stalling progress {in the development of a numerical approach}.

The paper is organized as follows. In Section \ref{regularity}, we discuss the analytic properties of the Cauchy transform that motivate the construction of the numerical method, and guarantee convergence.  In Section \ref{InvC}, we describe a numerical approach for the first sub-task, \textit{i.e.}, the calculation the inverse Cauchy transform for several types of admissible measures that arise in practice.  We then solve the inverse problem in Section \ref{recoverM}: we develop an algorithm to recover an unknown measure based on pointwise evaluation of its inverse Cauchy transform.  In each stage, we achieve spectral accuracy.  In the remaining sections, we apply this numerical algorithm to  free additive, multiplicative and compressive convolution.

\begin{table}
\centering
\begin{tabular}{p{3.25cm}p{4.35cm}p{6.25cm}}
\otoprule
\makebox[3.25cm][c]{Type} & \makebox[4.3cm][c]{Measure} & \makebox[5.55cm][c]{Cauchy transform}\\
\otoprule
%\multirow{3}{*}{}
\multirow{3}*[3ex]{\smallbox{\minitab[c]{Square-Root \\Decaying \\(e.g. Semi-Circle)}}} &
\tinybox{\minitab[l]{$d\mu(x) = \psi(x) \dfrac{2 \sqrt{x-a}\sqrt{b-x}}{b-a} dx,$ \\[0.35cm]
            $\psi(x)  = \sum_{k=0}^{\infty} \psi_{k}\, U_{k}(M_{(a,b)}^{-1}(x))$\\[0.35cm]}}
&\tinybox{\minitab[l]{$G_{\mu}(z) = \pi \sum_{k=1}^{\infty} \psi_{k-1} J_{+}^{-1}(M_{(a,b)}^{-1}(z))^{k},$ \\[0.35cm]
where $J_+^{-1}(z) =  z - \sqrt{z-1} \sqrt{1 + z}$ and, \\[0.35cm]
$M_{(a,b)}(z) = {a + b \over 2} + {b -a \over 2} z$}}
\\[0.35cm]
\midrule
\multirow{3}*[3ex]{\smallbox{\minitab[c]{Half Square Root \\/Smoothly Decaying}}} &
\tinybox{\minitab[l]{$d\mu(x) = \psi(x) \dfrac{2 \sqrt{x-a}}{1 + x - a} dx,$ \\[0.35cm]
            $\psi(x)  = \sum_{k=0}^{\infty} \psi_{k}\, U_{k}(M_{(a,\infty)}^{-1}(x))$\\[0.35cm]}}
&\tinybox{\minitab[l]{$G_{\mu}(z) = \pi \sum_{k=1}^{\infty} \psi_{k-1} \left[J_{+}^{-1}(M_{(a,\infty)}^{-1}(z))^{k}-1\right],$ \\[0.35cm]
where $J_+^{-1}(z) =  z - \sqrt{z-1} \sqrt{1 + z}$ and, \\[0.35cm]
$M_{(a,\infty)}(x) = a + { 1 + x \over 1 - x}$}}
\\[0.35cm]
\midrule
\multirow{3}*[5ex]{\smallbox{\minitab[c]{Smoothly \\Decaying  \\(e.g. Gaussian)}}} &
\tinybox{\minitab[l]{$d\mu(x) = \psi(x) dx$, \\[0.35cm]
$\psi(x) = \sum_{k=-\infty}^{\infty} \psi_{k}  u(x)^{k}$,\\[0.35cm]
where $\psi_{k} = \overline{\psi}_{-k}$ and,  \\[0.35cm]
 $u(x) = \dfrac{i-x}{i+x}$}}
&
\tinybox{\minitab[l]{$G_{\mu}(z)=-\sum_{k=0}^{\infty}(-1)^{k}\psi_{k} \,\,\,+ $ \\[0.35cm]
$- 2\pi \begin{cases} \sum_{k=0}^{\infty} \psi_{k}\, u(z)^{k}, \Im(z) > 0 \\[0.35cm]
\sum_{k=-1}^{-\infty} \psi_{k}\, u(z)^{k}, \Im(z) < 0 \\[0.35cm]
\end{cases},$\\[0.35cm]
where $u(z) = \dfrac{i-z}{i+z}$}}
\\[0.35cm]
\otoprule
\end{tabular}
\caption{Series representation of invertible measures and their associated Cauchy transforms.}
\label{tab:rep of measures}
\end{table}

%\begin{table}
%\centering
%\begin{tabular}{p{3.25cm}p{6.25cm}l}
%\otoprule
%\makebox[3.25cm][c]{Type} & \makebox[6.25cm][c]{Cauchy transform} & \minitab[c]{Inverse Cauchy transform}\\
%\otoprule
%%\multirow{3}{*}{}
%\multirow{3}*[3ex]{\minitab[c]{Square-Root \\Decaying \\(e.g. Semi-Circle)}}
%&\minitab[l]{$G_{\mu}(z) = \pi \sum_{k=1}^{\infty} \psi_{k} J_{+}^{-1}(M_{(a,b)}(z))^{k},$ \\[0.35cm]
%where $J_{+}^{-1}(z) = z- \sqrt{z^{2}-1}$ and, \\[0.35cm]
%$M_{(a,b)}(z) = .... $}
%& \minitab[l]{$G_{\mu}^{-1}(y)=i \dfrac{1-\lambda(y)}{1+\lambda(y)}$}\\[0.35cm]
%\midrule
%\multirow{3}*[5ex]{\minitab[c]{Smoothly \\Decaying  \\(e.g. Gaussian)}} &
%
%\minitab[l]{$G_{\mu}(z)=-\sum_{k=0}^{\infty}(-1)^{k}\psi_{k} \,\,\,+ $ \\[0.35cm]
%$- 2\pi \begin{cases} \sum_{k=0}^{\infty} \psi_{k}\, u(z)^{k}, \Im(z) > 0 \\[0.35cm]
%\sum_{k=-1}^{-\infty} \psi_{k}\, u(z)^{k}, \Im(z) < 0 \\[0.35cm]
%\end{cases},$\\[0.35cm]
%where $u(z) = \left(\dfrac{i-z}{i+z}\right)$}
%
%& \minitab[l]{$G_{\mu}^{-1}(y) = \dfrac{1}{2} \left( w(y) + \dfrac{1}{w(y)}\right)$} \\[0.35cm]
%\otoprule
%\end{tabular}
%\caption{Placeholder.}
%\label{tab:cauchy transform of measures}
%\end{table}

\section{Regularity properties of free convolution and its implication}\label{regularity}

We think of admissible measures as candidate `input' measures that we would like to convolve using the operations in Table \ref{tab:operations}.   In this viewpoint, invertible measures are the generic `output' measures that result from the convolution of admissible  `input' measures.  Table \ref{tab:rep of measures} lists the class of invertible measures; recall that these are measures that can be recovered accurately from knowledge of their inverse Cauchy transform. In contrast, admissible measures are those for which we can compute the inverse Cauchy transform accurately.

The semi-circle and Gaussian measures are both invertible and admissible; the uniform measure on an interval and the (discrete) point measure are admissible but not invertible.  Invertible measures are thus a proper subset of the class of admissible measures. Might this be a shortcoming of our proposed method? We assert otherwise and argue why the mathematics of free convolution gives us license to carve out the smaller class of invertible measures from the larger class of admissible measures.

Simply put, the free convolution of two admissible measures, under broad conditions,  results in an invertible measure. An important implication is that we can predict the form of the convolved measure and apply a suitable algorithm (see Section \ref{recoverM}) for recovering the measure from its inverse Cauchy transform.

In the discussion of the theory, we focus on free addition.  The {\it $R$-transform}, defined as $$R_\mu(y):=G_\mu^{-1}(y)-1/y,$$ is the analogue of the logarithm of the Fourier transform for free additive convolution. The free additive convolution of \pro measures on the real line   is denoted by the symbol $\bxp$ and can be characterized as follows.

Let $A_n$ and $B_n$ be independent $n \times n$ symmetric (or Hermitian) random matrices that   are invariant, in law, by conjugation by any orthogonal (or unitary) matrix. Suppose that, as $n \longrightarrow \infty$, $\mu_{A_{n}} \rightharpoonup \mu_{A}$ and $\mu_{B_{n}} \rightharpoonup \mu_{B}$. Then, free probability theory \cite{voiculescu1986addition,vdn91} states that $\mu_{A_n + B_n} \rightharpoonup \mu_{A} \bxp \mu_{B}$, a \pro measure which
 can be characterized in terms of the $R$-transform as
\begin{equation}\label{Radd}
	R_{\mu_A\bxp\mu_B}(y)= R_{\mu_A}(y)+R_{\mu_B}(y).
\end{equation}
Rearranging \eqref{Radd}, we find that
	$$G_{\mu_A \bxp \mu_B}^{-1}(y)  = G_{\mu_A}^{-1}(y) +  G_{\mu_B}^{-1}(y) -{1 \over y}.$$

In the following two theorems, we use this complex analytical statement to derive conditions for which invertible square root decaying measures and Schwartz measures are guaranteed to arise from free addition.    Refer to Appendix~\ref{cauchyprops} for a list of related properties of Cauchy transforms of probability measures.

\begin{Def}
	We denote the image of the Cauchy transform of a measure over the {\it extended} complex plane by
	$G_\mu(\C)$.
This includes both limiting values for $x \in \supp \mu$.  In other words:
	$$G_\mu(\C) =  \{y : \hbox{for all $\epsilon$ there exists $z \in \C$ satisfying $|{G_\mu(z) - y}| < \epsilon$}\}.$$
\end{Def}

	\begin{figure}[tb]
  \begin{center}
	\includegraphics[width=.7\linewidth]{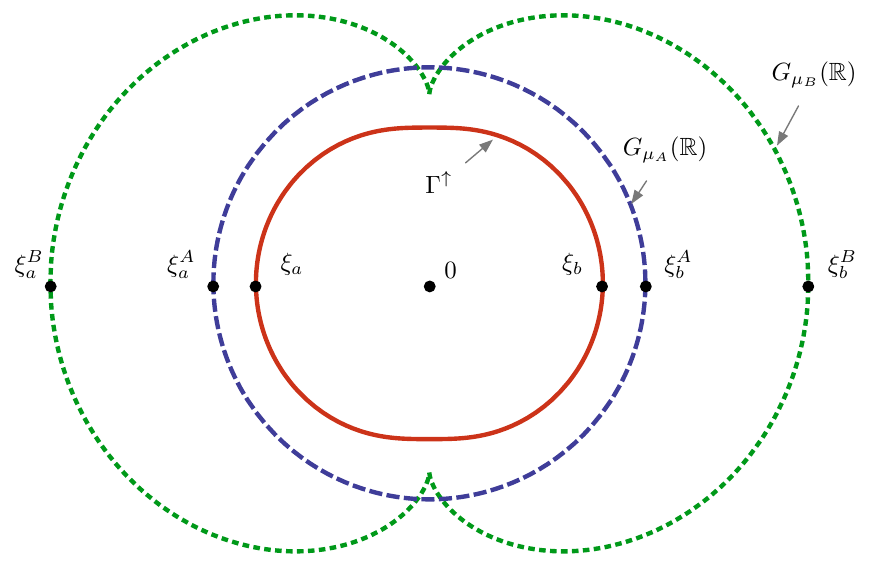}
  \end{center}
  \caption{\label{RealDomain}A plot depicting the image of the real line of $G_{\mu_S}$ (dashed), $G_{\mu_4}$ (dotted) and $G_{\mu_S \bxp \mu_4}$ (plain), where $\mu_S$ is the semicircle distribution and $\mu_4$ is the equilibrium measure with potential $V(x) = x^4$ (see Section~\ref{SemiPlusQuartic}).  The image of the output measure lies clearly inside the image of the two input measures.  
}  
%   curves on which $G_{\mu}^{-1}$ is real.  These include $\Gamma_\uparrow$, the image of $G_{\mu}^+$, and $\Gamma_\downarrow$, the image of $G_{\mu}^-$ and  $(\xi_a,\xi_b)$, for  $\xi_a = G_{\mu}(a)$ and $\xi_b = G_{\mu}(b)$.   

\end{figure}

\begin{Th}\label{sqrtgamma}
	Suppose $\mu_A$ is a precisely square root decaying invertible measure  and $\mu_B$ is a Jacobi measure whose Cauchy transform is single-valued.  Then $\mu_A \bxp \mu_B$ is precisely square root decaying and invertible, and 
	$$G_{\mu_A \bxp \mu_B}(\C) \subset 	G_{\mu_A}(\C) \cap G_{\mu_B}(\C).$$
This subset is strict.  Moreover, for $y \in G_{\mu_A}(\C) \cap G_{\mu_B}(\C)$,
	$$\sgn \Im G_{\mu_A \bxp \mu_B}^{-1}(y) \neq \sgn \Im y$$
if and only if $y \in G_{\mu_A \bxp \mu_B}(\C)$.

% whenever $G_{\mu_A}(\C)$ is a strict subset of $G_{\mu_B}(\C)$ (or vice-versa).  
\end{Th}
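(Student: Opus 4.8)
The plan is to base everything on the analytic subordination description of free additive convolution. Write $\mu_C := \mu_A \bxp \mu_B$ and $G_C := G_{\mu_C}$. By the subordination theory of Voiculescu and Biane there exist analytic self-maps $\omega_A,\omega_B : \C^+ \to \C^+$, with $\Im\omega_A(z),\Im\omega_B(z) \geq \Im z$, such that for all $z \in \C^+$
$$G_C(z) = G_{\mu_A}(\omega_A(z)) = G_{\mu_B}(\omega_B(z)), \qquad \omega_A(z) + \omega_B(z) = z + \frac{1}{G_C(z)}.$$
First I would settle the regularity claim. Since $\mu_A$ is precisely square-root decaying, $G_{\mu_A}$ has square-root branch points with nonvanishing leading coefficients at the edges of its support; locating the edges of $\mu_C$ as the images under the inverse subordination map of these critical points and applying a Puiseux/implicit-function analysis shows that $\mu_C$ again has square-root edges with nonzero coefficient, i.e.\ $\mu_C$ is precisely square-root decaying, while single-valuedness and injectivity of $G_C$ off $\supp\mu_C$ give invertibility. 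I expect this to be the principal analytic obstacle: it is exactly the edge-regularity theory of free convolution (Biane, Belinschi), and one must rule out degenerate edges and confirm that the support is a single interval.

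Granting subordination, the inclusion and the inverse formula are then immediate. If $y \in G_C(\C)$ with $\Im y < 0$, then $y = G_C(z)$ for some $z \in \C^+$, so $y = G_{\mu_A}(\omega_A(z))$ with $\omega_A(z) \in \C^+$; hence $y \in G_{\mu_A}(\C)$, and likewise $y \in G_{\mu_B}(\C)$. The reflection $G_C(\bar z) = \overline{G_C(z)}$ disposes of $\Im y > 0$, so $G_C(\C) \subset G_{\mu_A}(\C) \cap G_{\mu_B}(\C)$. Because $\mu_A,\mu_B$ are invertible we may identify $\omega_A(z) = G_{\mu_A}^{-1}(y)$ and $\omega_B(z) = G_{\mu_B}^{-1}(y)$, and the additive subordination identity rearranges to $G_C^{-1}(y) = z = G_{\mu_A}^{-1}(y) + G_{\mu_B}^{-1}(y) - 1/y =: F(y)$; thus $F$ is the analytic continuation of $G_C^{-1}$ to the larger set $G_{\mu_A}(\C) \cap G_{\mu_B}(\C)$. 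For strictness I would use the strict inequality $\Im\omega_A(z) > \Im z$, valid whenever $\mu_B$ is not a point mass, which forces the boundary curve $G_C(\R) = G_{\mu_A}(\omega_A(\R))$ to lie strictly inside $G_{\mu_A}(\R)$, matching Figure~\ref{RealDomain}.

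It remains to prove the sign characterization, which I establish for $\Im y \neq 0$ (points with $\Im y = 0$ are exceptional and play no role in the recovery algorithm). Fix $y$ in the intersection with $\Im y < 0$, the case $\Im y > 0$ being symmetric; invertibility of $\mu_A,\mu_B$ gives $G_{\mu_A}^{-1}(y), G_{\mu_B}^{-1}(y) \in \C^+$. If $y \in G_C(\C)$ then $F(y) = G_C^{-1}(y) = z \in \C^+$, so $\Im F(y) > 0$ and the signs differ. Conversely, suppose $\Im F(y) > 0$. Then $F(y) \in \C^+$ lies in the domain of $G_C$, so $H(y) := G_C(F(y))$ is analytic on $\Omega := \{y : \Im y < 0,\ \Im F(y) > 0\}$; on the nonempty open subset $G_C(\C) \cap \C^-$ we have $F = G_C^{-1}$, whence $H = \mathrm{id}$ there, and the identity theorem propagates $H \equiv \mathrm{id}$ across the component of $\Omega$ meeting it, giving $y = G_C(F(y)) \in G_C(\C)$. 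The genuine gap is the connectedness of $\Omega$, equivalently the claim that $\sgn\Im F$ changes sign exactly across the curve $G_C(\R)$ and nowhere else; I would close it by identifying the locus $\{\Im F = 0\}$ bounding $\Omega$ with the band edges of $\mu_C$, so that $\Omega = G_C(\C^+)$ is the continuous image of the connected half-plane. This connectedness is the main remaining obstacle in the final step.
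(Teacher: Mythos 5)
Your proposal takes a genuinely different route (analytic subordination) from the paper, which works directly with the function $g(y)=G_{\mu_A}^{-1}(y)+G_{\mu_B}^{-1}(y)-1/y$ on the region $G_{\mu_A}(\C^-)\cap G_{\mu_B}(\C^-)$. But the two places where you explicitly defer --- the ``Puiseux/implicit-function analysis'' establishing that $\mu_C$ is \emph{precisely} square-root decaying on a \emph{single} interval, and the connectedness of $\Omega=\{y:\Im y<0,\ \Im F(y)>0\}$ --- are exactly where all the work of the theorem lives, and your outline does not close either of them. Citing the edge-regularity literature as a black box is not a proof here: one must actually rule out degenerate edges (vanishing leading coefficient, higher-order turning points) and multi-interval support, and nothing in the subordination identities alone does that.

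The single idea you are missing, which resolves both gaps at once, is a direct analysis of the level set $\{\Im g=0\}$ inside $G_{\mu_A}(\C^-)\cap G_{\mu_B}(\C^-)$. On the boundary of that region one of $G_{\mu_A}^{-1}(\nu)$, $G_{\mu_B}^{-1}(\nu)$ is real, and the dominance inequality $\Im(1/w)<\Im G_{\mu}^{-1}(w)$ (Proposition~\ref{dominance}) forces $\Im g>0$ there; near $y=0$ the pole $1/y$ forces $\Im g\to-\infty$. Hence a real locus $\Gamma^\uparrow$ exists, cannot approach $0$, cannot close into a loop (else $g$ would be a real constant), and so must hit the real axis at exactly two points $\xi_a<0<\xi_b$; it is the \emph{unique} such curve. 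This uniqueness simultaneously (i) identifies the interior region $D$ as $G_{\mu_C}(\C^-)$ and gives your set $\Omega$ as a single component, which is precisely the ``if and only if'' sign characterization; (ii) forces the turning points of $g$ at $\xi_a,\xi_b$ to be first order (a higher-order turning point would emit additional real-valued curves), whence by the inverse function theorem $G_{\mu_C}$ has a convergent Puiseux series in $\sqrt{z-a}$ with nonzero $c_{1/2}$, i.e.\ $\mu_C$ is precisely square-root decaying and supported on one interval; and (iii) yields strictness quantitatively, since $g'(\xi_b^A)>0$ by Propositions~\ref{sqrtturning} and~\ref{derjensen} while $g'<0$ near $0$, placing $\xi_b$ strictly in $(0,\min(\xi_b^A,\xi_b^B))$. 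Without this (or an equivalent substitute), your argument establishes only the easy inclusion $G_{\mu_C}(\C)\subset G_{\mu_A}(\C)\cap G_{\mu_B}(\C)$ and one direction of the sign criterion.
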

\begin{proof}

\begin{remark}
The statement 	$$G_{\mu_A\bxp\mu_B}(\C) \subset G_{\mu_A}(\C)\cap G_{\mu_B}(\C)$$ first appeared in Proposition 4.3 of \cite{voiculescu1994analogues}. It is a direct consequence of the subordination  of the functions expressed in \cite{b98}.  However, we re-derive it below in a different manner.  
\end{remark}

	We restrict our attention to  the upper half plane $\C^+$,  where $\C^\pm = \{y : \sgn \Im y  = \pm 1\}$, as commuting with complex conjugate proves the other case (see Proposition~\ref{cauchyconj}).  Denote $\mu_C = \mu_A \bxp \mu_B$.  Define $\xi^A_a = G_{\mu_A}(\min \supp \mu_A)$,   $\xi^A_b = G_{\mu_A}(\max \supp \mu_A)$, and $\xi^B_a$ and $\xi^B_b$ similarly.  
	%We know that there exists a unique $G_{\mu_A + \mu_B}^{-1}$ such that $G_{\mu_A + \mu_B}^{-1}(0) = \infty$ and  whose analytic continuation satisfies $G_{\mu_A+ \mu_B}^{-1}(\Gamma^-) = \R$. 
Define
	$$g(y) = G_{\mu_A}^{-1}(y) +G_{\mu_B}^{-1}(y) - {1 \over y}.$$
Because $G_{\mu_A}$ and $G_{\mu_B}$ are single-valued, we know that $g$ is analytic in the interior of $G_{\mu_A}(\C) \cap G_{\mu_B}(\C)$.  Moreover, from free probability theory, we know that $g(y) = G_{\mu_A \bxp \mu_B}^{-1}(y)$ in a neighborhood of zero.

 The proof consists of the following: (1) showing that there exists a single curve $\Gamma^\uparrow$  on which $g$ is real-valued in the interior of $G_{\mu_A}(\C^-) \cap G_{\mu_B}(\C^-) \subset \C^+$, (2) showing that $\Gamma^\uparrow$ intersects the real axis at $0 < \xi_b < \min(\xi^A_b,\xi^B_b)$ and $\max(\xi^A_a,\xi^B_a) < \xi_a <0$, (3) showing that $g$ has a first order turning point at $\xi_a$ and $\xi_b$.  By uniqueness, it will follow that $\xi_a = \xi_a^C = G_{\mu_C}(\min \supp \mu_C)$ (and similarly $\xi_b = \xi_b^C$).     See Figure~\ref{RealDomain} for a depiction.

	(Step 1) 
 Consider $\nu$ on the boundary of  $G_{\mu_A}(\C^-) \cap G_{\mu_B}(\C^-)$, which must be continuous and bounded (by Proposition~\ref{continuousboundary}).  Being on the boundary implies either $G_{\mu_A}^{-1}(\nu)$ or $G_{\mu_B}^{-1}(\nu)$ is real; assume the former without loss of generality.  By Proposition~\ref{dominance}, we have
	$$\Im g(\nu) = \Im\left(G_{\mu_B}^{-1}(\nu) - {1 \over \nu}\right) > 0;$$
in other words, $g$ on this boundary has strictly positive imaginary part.  On the other hand, in a neighbourhood of zero $\Im g(y) \sim  \Im {1 \over y} \rightarrow - \infty$, hence there is a point $\zeta \in G_{\mu_A}(\C^-) \cap G_{\mu_B}(\C^-)$ such that $g(\zeta)$ is real-valued.   By analyticity, it follows that there exists a {\it curve} $\Gamma^\uparrow$ on which $g$ is real-valued.

	Because $G_{\mu_A}(\C^-) \cap G_{\mu_B}(\C^-)$ is bounded and restricting $\Gamma^\uparrow$, $\Gamma^\uparrow$  must either connect with the real line or approach the singularity of $g$ at zero. But we know that
	\begin{equation}\label{poleplusanalytic} 
		g(y) = {1 \over y} + \hbox{analytic}.
		\end{equation}
It follows that $\Gamma^\uparrow$ must connect with the real axis away from zero:  there exists $c < 0$ such that
	$$\Im g(y) < c \Im y$$
in a neighborhood of zero,  and hence no real-valued curve can be present in this neighborhood.   Moreover, $\Gamma^\uparrow$ must connect with the real axis at two points $\xi_a$ and $\xi_b$: one to the left and one to the right of zero.  Otherwise, a closed loop on which $\Im g$ is identically zero would be formed, falsely implying that $g$ is a real constant.  By the same logic, it is the only curve inside $G_{\mu_A}(\C^-) \cap G_{\mu_B}(\C^-)$ such that $g$ is real-valued.  Therefore, $\Gamma$ divides $G_{\mu_A}(\C^-) \cap G_{\mu_B}(\C^-)$ into an interior region $D$  such that $\Im g(y) > 0$ for all $y$ in $D$, and an exterior region such that $\Im g(y) < 0$.

(Step 2) We now show that  $\xi_b < \min(\xi^A_b,\xi^B_b)$  (similar logic shows $\max(\xi^A_a,\xi^B_a) < \xi_a$). 

\begin{enumerate}
	\item Case $\xi^A_b \leq \xi^B_b$:  We have (using Proposition~\ref{sqrtturning} and Proposition~\ref{derjensen})
	$$g'(\xi_b^A) = {G_{\mu_A}^{-1}}'(\xi^A_b)  + {G_{\mu_B}^{-1}}'(\xi^A_b) + {1 \over {\xi^A_b}^2} = {G_{\mu_B}^{-1}}'(\xi^A_b) + {1 \over {\xi^A_b}^2} >0.$$
However,
$g'(y) < 0$ near zero by \eqref{poleplusanalytic}.  Thus there exists a point in $(0,\xi^A_b)$ where $g'$ vanishes.  Combining the uniqueness of the curve on which $g$ is real and the fact that any turning point of a real-valued analytic function has another real-valued curve emanating into the complex plane, this turning point must be $\xi_b$.  

	\item Case $\xi^B_b < \xi^A_b$:  We know that $\beta > 0$, as otherwise $\xi^B_b = \infty$ (Proposition~\ref{sqrtturning}).  We have that $G_{\mu_B}(y) =  C + (y-\xi^B_b)^\beta + o(y - \xi^B_b)^\beta$ where $C$ is real-valued (Proposition~\ref{sqrtturning}).  Suppose that $\xi_b = \xi^B_b$, and hence $\Gamma^\uparrow$ approaches $\xi^B_b$.      For $\nu$ on the boundary of $G_{\mu_B}^{-1}(\C^-)$, we have
	 $$\Im g(\nu) = \Im\left(G_{\mu_A}^{-1}(\nu) - {1 \over \nu}\right) \sim c \Im {\nu}$$
 for $c \neq 0$, because $G_{\mu_A}^{-1}(y) - {1 \over y}$ is analytic at $\xi^B_b$ and does not have a turning point there (Proposition~\ref{derjensen}).  Because $\Im g(\zeta)$ vanishes on $\Gamma^\uparrow$, it must be bounded away from the boundary of $G_{\mu_B}^{-1}(\C^-)$ by a fixed angle.  But near $\xi^B_b$, the $(y-\xi^B_b)^p$ term dominates, imposing non-negative imaginary part away from the boundary of $G_{\mu_B}^{-1}(\C^-)$.  Thus we have a contradiction, and $\xi_b < \xi^A_b$.  

\end{enumerate}

%	We show that $\xi_b$ cannot be on $G_{\mu_A}(\C^-) \cap G_{\mu_B}(\C^-)$.  The point where $G_{\mu_A}(\C^-) \cap G_{\mu_B}(\C^-)$ intersects the real axis is necessarily at $b_{A\star} =G_{\mu_A}^-(\max \supp \mu_A)$ or $b_{B\star} = G_{\mu_B}^-(\max \supp \mu_B)$, or both.  Suppose $\xi_b = b_{A\star} < b_{B\star}$.   It follows that
%%
%	$$g'(\xi_b) = {G_{\mu_A}^{-1}}'(\xi_b) + {G_{\mu_B}^{-1}}'(\xi_b) + {1 \over \xi_b^2} = {1 \over \xi_a} < 0,$$
%%
%which contradicts the fact that $\xi_a$ is a turning point of $g$.  Similarly, if $\xi_a = G_{\mu_A}^-(\min \supp \mu_A)$ we have that $\xi_a >  G_{\mu_B}^-(\min \supp \mu_B)$.  $G_{\mu_B}^{-1}$ is monotonic, and satisfies ${G_{\mu_B}^{-1}}' < 0$.  Thus it again follows that $g'(\xi_a) < 0.$  Similar logic shows the remaining case, and that $\xi_b$ is also not on $G_{\mu_A}(\C^-) \cap G_{\mu_B}(\C^-)$.  

%There can be no point off the real axis between $\Gamma$ and $G_{\mu_A}(\C^-) \cap G_{\mu_B}(\C^-)$ such that $g$ is real-valued: otherwise, there must be a level curve of real-valuedness that passes through singularities but avoids $G_{\mu_A}(\C^-) \cap G_{\mu_B}(\C^-)$.  The only singularity is at zero, which would involve crossing $\Gamma$ twice.  Due to $\Im g$ being harmonic, it would imply that $\Im g = 0$, which implies $g = 0$, which is a contradiction.   It follows that $\Gamma = G_{\mu_C}^-(\R)$.  

Since $g$ is equal to $G_{\mu_C}^{-1}$ in a neighborhood of the origin and single-valued in $D$, we see that $G_{\mu_C}^{-1} = g$ inside all of $D$.  By uniqueness of real-valued curves, it follows that $\Gamma^\uparrow$ is the boundary of $G_{\mu_C}(\C^-)$.  

Note that
	\begin{equation}\label{noturning}
		g'(y) = {1 \over G_{\mu_C}'(G_{\mu_C}^{-1}(y))}.
	\end{equation}
Thus $g'$ cannot vanish when $y$ has nonzero imaginary part.  Moreover, it cannot vanish for $\xi_a < y < \xi_b$ as that would imply an additional real-valued curve.  Thus $g$ is  a conformal map from $D$ to $\C^-$.   The inverse function theorem  implies that $g^{-1}$ exists and is single-valued.  Moreover it is analytic everywhere, even (by analytic continuation) on the branch cut,   except at the branch points $a = a_C =  g^{-1}(\xi_a)$ and $b = b_C = g^{-1}(\xi_b)$.  Plemelj's lemma implies that
	$$\psi_C(x) = -{1 \over 2 \pi i} (G_{\mu_C}^+(x) - G_{\mu_C}^-(x)) = {1 \over \pi } \Im G_{\mu_C}^-(x) = {1 \over \pi } \Im g^{-1}(x),$$
hence $\psi_C$ itself is analytic (hence H\"older-continuous) for $x \in (a,b)$.  

Finally consider the endpoints $\xi_a$ and $\xi_b$, where $g$ necessarily has a turning point.  Since only two real-valued curves emanate from the turning point, it is necessarily first order (i.e., behaves like $c_0 + c_2 (y - \xi_a)^2$ for $c_2 \neq 0$).  Thus the inverse function theorem implies that $G_{\mu_C}$ has the convergent series
	$$G_{\mu_C}(z) = c_0 + c_{1 \over 2} \sqrt{z - a} + c_1 (z-a) + c_{3 \over 2} (z-a)^{3\over 2} + \cdots$$ 
where the real-valuedness of $G_{\mu_C}(z)$ for $z > a$ imposes that the $c_k$ are real.  The polynomial terms vanish from $\Im G_{\mu_C}(x)$, leaving an analytic function times $\sqrt{z-a}$.   Thus $\mu_C$ is a precisely square root decaying, invertible measure.

\end{proof}

A similar result showing that Schwartz measures dominate other behavior now follows:

\begin{Th}\label{smoothgamma}
	Suppose $\mu_A$ is an invertible Schwartz measure and $\mu_B$ is single-valued and either a Schwartz measure or  a compactly supported admissible measure.  Then $\mu_A \bxp \mu_B$ is an invertible Schwartz measure, and 
	$$G_{\mu_A \bxp \mu_B}(\C) \subset 	G_{\mu_A}(\C) \cap G_{\mu_B}(\C).$$
This subset is strict, except at zero.   Moreover, for $y \in G_{\mu_A}(\C) \cap G_{\mu_B}(\C)$,
	$$\sgn \Im G_{\mu_A \bxp \mu_B}^{-1}(y) \neq \sgn \Im y$$
if and only if $y \in G_{\mu_A \bxp \mu_B}(\C)$.

% whenever $G_{\mu_A}(\C)$ is a strict subset of $G_{\mu_B}(\C)$ (or vice-versa).  
\end{Th}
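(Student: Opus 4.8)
The plan is to follow the proof of Theorem~\ref{sqrtgamma} essentially verbatim through the construction of the real-valued curve, replacing only the endpoint analysis. Restricting to $\C^+$ and using Proposition~\ref{cauchyconj} for the conjugate case, set $\mu_C = \mu_A \bxp \mu_B$ and
$$g(y) = G_{\mu_A}^{-1}(y) + G_{\mu_B}^{-1}(y) - {1 \over y}.$$
Single-valuedness of $G_{\mu_A}$ and $G_{\mu_B}$ makes $g$ analytic in the interior of $G_{\mu_A}(\C) \cap G_{\mu_B}(\C)$, and free probability identifies $g = G_{\mu_C}^{-1}$ near zero. The boundary argument of Step~1 transfers unchanged: on the boundary of $G_{\mu_A}(\C^-) \cap G_{\mu_B}(\C^-)$ (continuous and bounded by Proposition~\ref{continuousboundary}) one of $G_{\mu_A}^{-1}, G_{\mu_B}^{-1}$ is real, so $\Im g > 0$ there by Proposition~\ref{dominance}, whereas $\Im g \sim \Im(1/y) \to -\infty$ near the origin; the pole-plus-analytic structure \eqref{poleplusanalytic} then forces a unique real-valued curve $\Gamma^\uparrow$ separating a region where $\Im g < 0$ from one where $\Im g > 0$.

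The essential departure is in locating the ends of $\Gamma^\uparrow$: in Theorem~\ref{sqrtgamma} these were finite branch points, the images of the endpoints of a compact support, whereas here the decisive structural fact is that a Schwartz measure is supported on all of $\mathbb{R}$. Consequently $G_{\mu_A}(\C)$ meets the real axis only at the origin, the common limit of $G_{\mu_A}^\pm(x)$ as $x \to \pm\infty$. Since the intersection $G_{\mu_A}(\C^-) \cap G_{\mu_B}(\C^-)$ is contained in $G_{\mu_A}(\C^-)$, it too meets the real axis only at the origin --- and this holds whether $\mu_B$ is Schwartz or merely compactly supported, because in both cases the Schwartz factor $\mu_A$ dominates. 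Therefore $\Gamma^\uparrow$ cannot terminate on the real axis at any nonzero point; both of its ends limit to the origin, and the region $D$ it bounds is pinched there. This is precisely the assertion that $G_{\mu_C}(\C) \subset G_{\mu_A}(\C) \cap G_{\mu_B}(\C)$ strictly, except at zero.

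With the ends of $\Gamma^\uparrow$ pinned at the origin, the remaining steps follow the template. By \eqref{noturning}, $g'$ cannot vanish off the real axis, nor on the real axis between the ends of $\Gamma^\uparrow$ (which would spawn an extra real-valued curve), so $g$ restricts to a conformal map from the subregion $D = G_{\mu_C}(\C^-)$ onto $\C^-$. Its inverse $g^{-1} = G_{\mu_C}$ is then single-valued and analytic, extending by analytic continuation across the real axis off the origin, and Plemelj's lemma gives $\psi_C(x) = {1 \over \pi}\Im g^{-1}(x)$, analytic on the interior of the support. The sign characterization is immediate, since for $y \in \C^+$ one has $y \in G_{\mu_C}(\C)$ exactly when $g(y) = G_{\mu_C}^{-1}(y) \in \C^-$.

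The main obstacle I anticipate is upgrading ``analytic density supported on all of $\mathbb{R}$'' to the full Schwartz conclusion: $\psi_C \in C^\infty(-\infty,\infty)$ with $\int \psi_C(x)\,|x|^k\,dx < \infty$ for every $k$. Since $g(y) = 1/y + h(y)$ with $h$ analytic near the origin, inverting at $y = 0$ produces an asymptotic expansion $G_{\mu_C}(z) = 1/z + m_1/z^2 + \cdots$ at infinity whose coefficients are the moments of $\mu_C$; the rate at which $\Im g^{-1}(x) \to 0$ as $x \to \pm\infty$, together with the smoothness of $\Gamma^\uparrow$ at the pinch point, controls the decay of $\psi_C$ and of all its derivatives. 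I expect the rapid decay of $\psi_A$ (and of $\psi_B$ in the two-Schwartz case) to transfer through the analytic inversion, but making the $|x|^k$-integrability and $C^\infty$ smoothness rigorous --- in particular reconciling them with the finite branch-point contributions of a compactly supported $\mu_B$, which must map to interior points where $g^{-1}$ stays analytic --- is the delicate step.
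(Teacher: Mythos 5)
Your overall strategy is the paper's: restrict to $\C^+$, locate the real-valued curve $\Gamma^\uparrow$ of $g$, observe that because $G_{\mu_A}(\C^-)$ touches the real axis only at the origin both ends of $\Gamma^\uparrow$ must pinch there, and then run the conformal-map/Plemelj argument. But there are two genuine gaps. First, uniqueness of $\Gamma^\uparrow$. In Theorem~\ref{sqrtgamma} a second real-valued curve was excluded because it would close up a loop on which $\Im g\equiv 0$, forcing $g$ to be constant; here any second curve would also pass through the origin, so the putative ``loop'' runs through the pole of $g$ at $0$ and the maximum principle cannot be applied directly. You simply assert that \eqref{poleplusanalytic} ``forces a unique real-valued curve,'' but \eqref{poleplusanalytic} is exactly the estimate that, in the compact case, showed \emph{no} real-valued curve can enter a neighbourhood of zero --- so it is in tension with, not supportive of, a curve pinched at the origin, and some argument is needed. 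The paper resolves this via Proposition~\ref{smoothdecaydecay}: $g(y)\sim 1/y+g_0+g_1y+\cdots$ with real coefficients forces every real-valued curve to be asymptotic (tangent to all orders) to the real axis at $0$, and then a Poisson-kernel argument on the region between $\Gamma$ and a hypothetical second curve $\Upsilon$, with small contours excised around the singularity, shows $\Im g\equiv 0$ there, a contradiction. Without uniqueness you cannot conclude that $g$ has no turning points on $\Gamma^\uparrow$, which your conformal-map step relies on.

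Second, you explicitly leave open the step that $\psi_C$ is Schwartz, calling the $|x|^k$-integrability ``the delicate step.'' In the paper this is actually a short computation that you are one observation away from: adding the real asymptotic expansion of $G_{\mu_A}^{-1}$ at $0$ to that of $G_{\mu_B}^{-1}$ (a real asymptotic series in the Schwartz case, a Laurent series with a simple pole and real coefficients in the compactly supported case) gives $g^{-1}(z)\sim 1/z+\alpha_{-2}/z^2+\cdots$ with all $\alpha_k$ \emph{real}; hence $\pi\psi_C(x)=\Im g^{-1}(x)$ has an asymptotic expansion at $\pm\infty$ all of whose coefficients vanish, i.e.\ $\psi_C$ decays faster than any power. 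You record that the coefficients are the moments of $\mu_C$ but never draw the conclusion that their reality annihilates the imaginary part to all orders, which is the whole point.
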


\begin{proof}
	Again, let $\Im y > 0$ (commuting with complex conjugate proves the other case).  
	%We know that there exists a unique $G_{\mu_A + \mu_B}^{-1}$ such that $G_{\mu_A + \mu_B}^{-1}(0) = \infty$ and  whose analytic continuation satisfies $G_{\mu_A+ \mu_B}^{-1}(\Gamma^-) = \R$

By the same logic as before, there exists a curve $\Gamma^\uparrow$ inside $G_{\mu_A}(\C) \cap G_{\mu_B}(\C)$, but now it must pass through zero, as $G_{\mu_A}$ is strictly in the upper half plane.  We must verify that there is not a second curve on which $g$ is real-valued (also passing through zero).  By Proposition~\ref{smoothdecaydecay}, we have
	$$g(y) \sim {1 \over y}  + g_0 + g_1 y + \cdots,$$
where $g_k$ are all real.  
This means that both $\Gamma$ and any other curve $\Upsilon$ of real-valuedness must be asymptotic to the real line (on which ${1 \over y}$ is real-valued).  We can appeal to the Poisson kernel with $\Im g$ evaluated on both $\Gamma$ and $\Upsilon$, with small contours connecting these to avoid the singularity at zero.  Letting these contours tend to zero causes $\Im g$ to tend to zero (since $\Im {1 \over y}\rightarrow 0$ between $\Gamma$ and $\Upsilon$).  This  implies that $\Im g$ is identically zero between $\Gamma$ and $\Upsilon$, giving a contradiction.

By uniqueness of the real-valued curve, we have that $g$ has no turning points on $\Gamma$.  Furthermore, by \eqref{noturning}, we know that $g$ has no turning points inside $\Gamma$.  Thus $g$ is a conformal map; hence, $g^{-1}(z) = G_{\mu_C}^{-1}(z)$ exists and is analytic in $\C^-$.  Plemelj's lemma implies that
	\begin{equation*}\label{eq:recoverC}
		\psi_C(x) = {1 \over \pi } \Im G_{\mu_C}^-(x) = {1 \over \pi } \Im g^{-1}(x)
		\end{equation*}
is analytic and non-zero.

We only have to show that it has the correct decay at infinity.  
This follows since
	$$g^{-1}(z)  \sim {1 \over z} + {\alpha_{-2} \over z^2} + {\alpha_{-3} \over z^3} + \cdots,$$
for $\alpha_k$ real (by adding the asymptotic expansion of $G_{\mu_A}^{-1}$ and either the asymptotic expansion for Schwartz measures or the  Laurent series with a simple pole for compactly supported measures of $G_{\mu_B}^{-1}$); thence, 
	$$\Im g^{-1}(x) \sim {0 \over x} + {0 \over x^2} + {0 \over x^3} + \cdots.$$
%, which proves the result by the reconstruction \eqref{eq:recoverC}.  

%Thus it is the Stieljes transform 
%%
%	$$g^{-1}(z) = \int_{-\infty}^\infty {\psi(x) \over x - z} dx$$
%%
%where $\psi$ is Schwartz class \cite{duran1989}.  By uniqueness, we know that $\psi(x) dx$ must be $d(\mu_A \bxp \mu_B)$; i.e., $\psi$ is real and positive.  

%

\end{proof}

\begin{remark}
	The conditions in the preceding two theorems are far from exhaustive, and the numerical scheme below works in practice for many other free convolution problems.   Some examples include the free addition of two step measures, which results in a precisely square root decaying measure.  On the other hand, we have an example of  a Jacobi measure with $\alpha = \beta = 5/2$ convolved with itself that does not appear to  result in a precisely square root decaying measure; rather, it appears to have {\it linear decay}.  Moreover, if the input measures are smoothly decaying but with only algebraic decay, the convolved measure has, apparently, only algebraic decay.   We will not attempt to  generalize   the preceding proofs to other classes of measures here, or to free multiplication.  
\end{remark}

\section{Computation of inverse Cauchy transforms}\label{InvC}

For brevity, we omit the details for half square root/smoothly decaying measures below, as they can be treated very similarly to square root decaying measures.  We include the relevant formul\ae\ in Table \ref{tab:rep of measures}.

We also omit the formul\ae\ for the Cauchy transform of expansions in Jacobi polynomials --- which are expressible in terms of hypergeometric functions \cite{elliott71} --- except for the simplest case of square root decaying measures.  Inversion of the Cauchy transform of Jacobi measures can be accomplished using the approach advocated in Section~\ref{combomeasures}.

%Table \ref{tab:rep of measures} \& Table \ref{tab:cauchy transform of measures} summarize the Cauchy transform computation and the inverse.

We note that the formul\ae\ for the Cauchy transforms below follow from Plemelj's lemma \cite{SIE}: i.e., if $\ud \mu = \psi \ud x$ for suitably smooth $\psi$, then
	$$\phi^+(x) - \phi^-(x) = - 2 \pi i \psi(x)\hbox{ and }\phi(\infty) = 0$$
if and only if
$\phi = G_\mu$, where $\phi^+$ denotes the limit in the complex plane from above and $\phi^-$ denotes the limit from below.

\begin{remark}
	In most presentations of Plemelj's lemma, ``suitably smooth'' means H\"older continuous, eg.~\cite{SIE}.  In fact, the lemma continues to hold true for all measures of the form $\psi(x) dx$ where  $\psi \in {\rm L}^p[{\Bbb R}]$ for $1 < p < \infty$, see, for example, Section 7.1 in \cite{d00}.  
\end{remark}

%\begin{landscape}
%\begin{table}
%\centering
%\begin{tabular}{p{3.25cm}p{5.9cm}p{6.25cm}l}
%\otoprule
%\makebox[3.25cm][c]{Type} & \makebox[4.9cm][c]{Measure} & \makebox[6.25cm][c]{Cauchy transform} & \minitab[c]{Inverse Cauchy transform}\\
%\otoprule
%%\multirow{3}{*}{}
%\multirow{3}*[3ex]{\minitab[c]{Square-Root \\Decaying \\(e.g. Semi-Circle)}} &
%\minitab[l]{$d\mu(x) = \psi(x) \dfrac{2 \sqrt{x-a}\sqrt{b-x}}{b-a} dx,$ \\[0.35cm]
%            $\psi(x)  = \sum_{k=-\infty}^{\infty} \psi_{k}\, U_{k}(x)$\\[0.35cm]}
%&\minitab[l]{$G_{\mu}(z) = \pi \sum_{k=1}^{\infty} J_{+}^{-1}(M_{(a,b)}(z))^{k},$ \\[0.35cm]
%where $J_{+}^{-1}(z) = z- \sqrt{z^{2}-1}$ and, \\[0.35cm]
%$M_{(a,b)}(z) = .... $}
%& \minitab[l]{$G_{\mu}^{-1}(y)=i \dfrac{1-\lambda(y)}{1+\lambda(y)}$}\\[0.35cm]
%\midrule
%\multirow{3}*[5ex]{\minitab[c]{Smoothly \\Decaying  \\(e.g. Gaussian)}} &
%\minitab[l]{$d\mu(x) = \psi(x) dx$, \\[0.35cm]
%$\psi(x) = \sum_{k=-\infty}^{\infty} \psi_{k} \left(\dfrac{i-x}{i+x}\right)^{k}$,\\[0.35cm]
%where $\psi_{k} = \overline{\psi}_{-k}$ and,  \\[0.35cm]
% $u(x) = \left(\dfrac{i-x}{i+x}\right)$}
%&
%\minitab[l]{$G_{\mu}(z)=-\sum_{k=0}^{\infty}(-1)^{k}\psi_{k} \,\,\,+ $ \\[0.35cm]
%$- 2\pi \begin{cases} \sum_{k=0}^{\infty} \psi_{k}\, u(z)^{k}, \Im(z) > 0 \\[0.35cm]
%\sum_{k=-1}^{-\infty} \psi_{k}\, u(z)^{k}, \Im(z) < 0 \\[0.35cm]
%\end{cases}$,\\[0.35cm]}
%
%& \minitab[l]{$G_{\mu}^{-1}(y) = \dfrac{1}{2} \left( w(y) + \dfrac{1}{w(y)}\right)$} \\[0.35cm]
%\otoprule
%\end{tabular}
%\caption{Placeholder.}
%\label{tab:exact results}
%\end{table}
%\end{landscape}

\subsubsection{Computing the Cauchy transform and its function inverse of smoothly decaying measures}
\label{sec:cauchy compute smooth}

Consider a  smoothly decaying measure  of the form
	 $\ud\mu(x) = \psi(x)  \ud x.$
 Because $\psi\left({\I {1 - z \over 1 + z}}\right)$ has an absolutely convergent Laurent series, we  can expand
	\begin{equation}\label{realseries}
		\psi(x)  = \sum_{k = -\infty}^\infty \psi_k \left({i - x \over i + x}\right)^k,
	\end{equation}
where $\psi_k = \bar \psi_{-k}$ (since $\psi$ is real-valued) and $\psi(\infty) = \sum_{k=-\infty}^\infty (-)^k \psi_k = 0$.  The Cauchy transform satisfies \cite{WeidemanHilbertTransform,SOHilbertTransform}
	\begin{equation}\label{smoothlycauchy}
		G_\mu(z) =-2 \pi i \left[\begin{cases}
	\sum_{k = 0}^\infty \psi_k \left({i - z \over i + z}\right)^k & \Im z > 0 \\
	-\sum_{k = -\infty}^{-1} \psi_k \left({i - z \over i + z}\right)^k & \Im z < 0	
	\end{cases} -\sum_{k = 0}^\infty (-)^k \psi_k\right].
	\end{equation}

If $\psi$ is $C^\infty(-\infty,\infty)$ and $\psi$ has a full asymptotic expansion that matches at $\pm \infty$ (e.g., Schwartz measures), then the series \eqref{realseries} converges spectrally quickly.  Moreover, we can rapidly compute the coefficients of the expansion by applying the FFT to the pointwise function samples  $\psi\left(i {1 - {\mathbf u}_m \over 1 +  {\mathbf u}_m}\right)$, where $ {\mathbf u}_m$ are $m$ evenly spaced points on the unit circle:
	$${\mathbf u}_m = \left[-1,e^{i \pi \left({2 \over m}-1\right)}, \ldots ,e^{i \pi \left(1-{2 \over m}\right)} \right].$$
Thus we take $m = 2n + 1$ and uniformly approximate 
	$$G_\mu(z) \approx -2 \pi i \left[\begin{cases}
	\sum_{k = 0}^n \psi_k \left({i - z \over i + z}\right)^k & \Im z > 0 \\
	-\sum_{k = -n}^{-1} \psi_k \left({i - z \over i + z}\right)^k & \Im z < 0	
	\end{cases} -\sum_{k = 0}^n (-)^k \psi_k\right].$$
For large $n$, $\psi_k$ are accurate to machine precision.

	Now consider the problem of computing $G_\mu^{-1}$.
	Note that
	$$G_\mu\left(i {1 - z \over 1 + z}\right) \approx -2 \pi i \left[\begin{cases}
	\sum_{k = 0}^n \psi_k z^k & | z | < 0 \\
	-\sum_{k = -n}^{-1} \psi_k z^k & |z | > 0	
	\end{cases} -\sum_{k = 0}^n (-)^k \psi_k\right].$$
We can therefore invert the approximation of $G_\mu$ using a companion matrix method.  In detail, we compute the eigenvalues $\{\lambda_1^+(y),\ldots,\lambda_{n}^+(y)\}$ of the matrix
	$$\begin{pmatrix}& &  &  {\psi_0 - \sum_{k = 0}^n (-)^k \psi_k  -{y\over - 2\pi i}\over \psi_n}   \\ 1 &  & &  {\psi_1 \over \psi_n} \\ & \ddots  &  & \vdots \\
															& & 1 & {\psi_{n-1} \over \psi_n}\end{pmatrix}.$$
  Similarly, we compute the eigenvalues $\{\lambda_1^-(y),\ldots,\lambda_{n}^-(y)\}$ of the matrix
	$$\begin{pmatrix}& &  &  {\psi_{-n}  \over { y \over 2 \pi i} +  \sum_{k = 0}^n (-)^k \psi_k}  \\ 1 &  & &  {\psi_{-1} \over { y \over 2 \pi i} +  \sum_{k = 0}^n (-)^k \psi_k} \\ & \ddots  &  & \vdots \\
															& & 1 &  {\psi_{-1}  \over { y \over 2 \pi i} +  \sum_{k = 0}^n (-)^k \psi_k}\end{pmatrix}.$$
Then
	$$G_\mu^{-1}(y) \approx i {1 - \lambda(y) \over 1 + \lambda(y)},$$
where
	 $$\lambda(y) = \{\lambda_i^+(y) : |\lambda_i^+(y)| \leq 1\} \cup \{\lambda_i^-(y) : |\lambda_i^-(y)| \geq 1\}.$$
The number of computed eigenvalues $\lambda(y)$ will match the true number of $G_{\mu}^{-1}(y)$ (for large enough $n$),  due to the uniform convergence of Taylor series and Roch\'e's theorem.  In particular, for invertible Schwartz measures there will be precisely one for all $y$ in $G_{\mu}({\mathbb C})$.

%\sotodoinline{Reference something proving convergence of companion matrix methods?}

\subsubsection{Computing the Cauchy transform and its function inverse of square root decaying measures}

	Suppose that $\mu$ is a square root decaying measure:
	 $$\ud\mu(x) = \psi(x) {2 \sqrt{x-a} \sqrt{b - x} \over b -a}    \ud x.$$
(The definition of $\psi$ here is a constant multiple of the $\psi$ in Definition \ref{JacobiMeasureDef}.) We can represent
	$$\psi(M_{(a,b)}(x))  = \sum_{k = 0}^\infty \psi_k U_k(x),$$
where $U_k$ denote the Chebyshev polynomials of the second kind and $M_{(a,b)}$ is an affine transformation from the unit interval to $(-1,1)$:
$$M_{(a,b)}(x) = {a + b \over 2} + {b -a \over 2} x.$$
  Then the Cauchy transform satisfies
	\begin{equation}\label{sqrtcauchy}
		G_\mu(z) =\pi \sum_{k = 1}^\infty \psi_{k-1} J_+^{-1}(M_{(a,b)}^{-1}(z))^k,
	\end{equation}
where
	$$J_+^{-1}(z) =  z - \sqrt{z-1} \sqrt{1 + z}$$
is an inverse to the Joukowsky transform
	$$ J(w) = {1 \over 2} \left(w + {1 \over w}\right).$$
{Here, and throughout the paper, $\sqrt z$ has the standard principle branch.  Therefore, $J_+^{-1}$ has a branch cut along $[-1,1]$,  maps the slit plane ${\mathbb C} \backslash [-1,1]$ to the interior of the unit disk and satisfies $J_+^{-1}(\infty) = 0$.}

\begin{remark}
	We have not found this exact expression for the Cauchy transform of a square root decaying measure in the literature, though directly related expressions are in \cite{SOHilbertTransform,SOEquilibriumMeasure}.  In short, it follows from Plemelj's lemma and the fact that	
		$$\lim_{\epsilon \rightarrow^+ 0}  i{J_+^{-1}(x+i \epsilon)^k  - J_+^{-1}(x - i \epsilon)^k \over 2 } = U_{k-1}(x) \sqrt{1-x^2},$$
verifiable by the substitution $x = \cos \theta$ \cite{dlmf}.
\end{remark}

We can compute the coefficients $\psi_k$ whenever we can evaluate $\psi$ pointwise, and thence the Cauchy transform itself.  This is accomplished by first computing the expansion in terms of Chebyshev polynomials of the first kind
	\begin{equation}\label{eq:sqrtTexp}
		\psi(M_{(a,b)}(x)) \approx \sum_{k=0}^{n-1} \psi_k T_k(x),
	\end{equation}
which can be accomplished by applying the DCT to $\psi(M_{(a,b)}({\mathbf x}_n))$, where ${\mathbf x}_n$ are $n$ Chebyshev points of the second kind:
	$${\mathbf x}_n = J({\mathbf u}_{2 (n-1)})_{1 : n}.$$
We then transform the expansion \eqref{eq:sqrtTexp} to an expansion in terms of Chebyshev polynomials of the second kind using the formul\ae
	$$T_0(x) = U_0(x), T_1(x) = {U_1(x) \over 2}\hbox{ and }T_k(x) = {U_{k}(x) - U_{k-2}(x) \over 2}\hbox{ for $k = 2,3,\ldots$}$$
 This approximation will converge spectrally when $\psi \in C^\infty[a,b]$.

\subsubsection{Computing the  inverse  Cauchy transform of a square root decaying measure}

	We want to solve
	$$G_\mu(z) = y.$$
Since $J_+^{-1}(J(w)) = w$ for $w$ inside the unit circle, we have
	$$G_\mu\left(M_{(a,b)}(J(w))\right) \approx \pi \sum_{k = 1}^n \psi_{k-1} J_+^{-1}(J(w))^k = \pi \sum_{k = 1}^n \psi_{k-1} w^k.$$
We can thus solve $G_\mu\left(M_{(a,b)}(J(w))\right) = y$ to find $w(y)$ inside the unit circle using a companion matrix method (as above).  Then
	$$G_\mu^{-1}(y) \approx M_{(a,b)}( J(w(y))).$$

\subsubsection{Computing the Cauchy transform and its function inverse of a point measure}

Suppose $\ud \mu(x) = \delta(x- a) \ud x$.  Then its Cauchy transform is trivial:
	$$G_\mu(z) = \int {\ud \mu \over z- x} = {1 \over z - a}.$$
Its inverse is
	$$G_\mu^{-1}(y) =  {1 \over y}+ a.$$

\subsubsection{Computing the function inverse of the Cauchy transform for other compactly supported measures}\label{combomeasures}

	For simplicity, consider the case where $\mu$ is a sum of point measures, for example, the counting measure
	$$\ud \mu = {1 \over n} \sum_{i = 1}^n \delta(x - \lambda_i) \ud x$$
 of one realization of a $n \times n$ random symmetric matrix with eigenvalues $\lambda_1,\ldots,\lambda_n$.  The Cauchy transform can be computed directly using the previous approach, however, its inverse is no longer straightforward to compute.   To calculate the inverse, we surround the support of $\mu$ by an ellipse $E_{(a,b),r}$ in the complex plane, on which the Cauchy transform of the measure is smooth.   We then exploit analyticity of the Cauchy transform outside of this ellipse.
	
	Define an ellipse  $E_{(a,b),r}$ surrounding the interval $(a,b)$ as the image of the unit circle under the map
	$M_{(a,b)}(J(r w)),$  with inverse ${1 \over r}J_+^{-1}(M_{(a,b)}^{-1}(z))$.    We can then expand a function $g$ defined on $E_{(a,b),r}$ by
	\begin{equation}\label{eq:ellipseexp}
		g(M_{(a,b)}(J(r w))) = \sum_{k = - \infty}^\infty g_k w^k,
	\end{equation}
where the coefficients are computable numerically using the FFT as before.

%	$$J_r(z) = {1 \over 2} \left({1 \over r z} + r z \right).$$
%
%Thus points on the ellipse can be mapped back to the unit circle by
%%
%	$$J_r^{-1}(z) = {z - \sqrt{z - 1} \sqrt{z + 1} \over r}.$$
%%
%We then translate the interval $(-1,1)$ to $(a,b)$ using the standard affine transformation, giving us $E_{(a,b),r}$.
%	We can map this ellipse to the unit circle by
%
%	$$M(z) = {1 \over r}\left({a + b -2 z \over a - b} - 2 \sqrt{a\right)$$
%

	On and outside this ellipse, $G_\mu$ is analytic and vanishes at infinity, therefore $G_\mu(M_{(a,b)}(J(r w))$ is analytic inside the unit circle for $r<1$ and vanishes at zero.  Hence we can efficiently represent it in terms of its Taylor series:
	 $$G_{\mu}(z) = \sum_{k = 1}^{\infty} g_k \left[{1 \over r}J_+^{-1}(M_{(a,b)}^{-1}(z))\right]^k.$$
Analyticity of this sum implies that the expression holds true for $z$ outside $E_{(a,b),r}$ as well.  Mapping this sum back to the unit circle allows us to compute $G_{\mu}^{-1}$ using companion matrix methods.

%\soc{The arguments of the following section demonstrate that $G_\mu$ is conformal outside $E_{(a,b),r}$ for any choice of $r$, hence the computed inverse is unique.}

\begin{remark}

Note that $r$ is a free parameter.  As $r$ approaches one, the ellipse approaches the interval $(a,b)$, which includes the support of $\mu$.  Since  $G_\mu$ generically has singularities on the support of $\mu$,  the convergence rate of the expansion \eqref{eq:ellipseexp} degenerates.  For $r$ small, the ellipse is too large and the region of validity for computing $G_{\mu}^{-1}$ shrinks.  For the numerical examples below, we fix $r$ arbitrarily ($r = .8$).  A better approach would be to exploit the connection with the closely related problem of optimizing the radius of circle used in numerical differentiation; a problem solved in \cite{b11}.

\end{remark}

\section{Recovering a measure from its inverse Cauchy transform}\label{recoverM}

	Using the preceding formul\ae\ and the expressions for the transforms below, we can successfully compute  the inverse Cauchy transform  $G_\mu^{-1}$  of some unknown measure $\mu$ pointwise, which will arise as the output of a free probability operation.  If $\mu$ is either a smoothly  or a square root decaying measure, we assert that, under broad conditions, the following algorithm will construct an accurate approximation to $\mu$:

\Algorithm{computemeasure} Compute measure from inverse Cauchy transform
	\given  $G_\mu^{-1}$ (accurate in $G_\mu(\C)$), point cloud $\mathbf y_M = (y_1,\ldots,y_M)$ in the upper half plane and the assumed form of the measure $\mu$ (smoothly or  square root decaying invertible measure);
	\return a representation $\mu$;
	\step Use \algref{choosepoints} to prune  $\mathbf y_M$ so that all points lie inside  $G_\mu(\C)$;
	\step If the desired form for $\mu$ is a smoothly decaying measure, use \algref{smoothlydecaying};
	\step Otherwise, if the desired form for $\mu$ is square root decaying measure, use \algref{sqrtsing}.

%\sotodoinline{the Cauchy transform of positive measures commutes with complex conjugation, I think.  This means half the point samples below are redundant, will fix later.}

The first step of the algorithm is to assure that all sample points lie within $G_\mu(\C)$.  Motivated by Theorem~\ref{sqrtgamma} or \ref{smoothgamma}, we use the following algorithm:

\Algorithm{choosepoints}  Prune points
	\given  $G_\mu^{-1}$  and point cloud $\mathbf y_M = (y_1,\ldots,y_M)$;
	\return $\mathbf y_m$ (hopefully $ \subset G_\mu(\C)$);	
	\step Select the elements of $\mathbf y_M$ that satisfy $\sgn \Im y \neq \sgn \Im g(y)$ and for which $g(y)$ is single-valued.

%\sotodoinline{rewrite}

%\begin{propo}\label{signofGmu}
%	Suppose that $\mu$ is an invertible measure.  If $G_\mu^{-1}(y)$ can be analytically continued in a domain containing $G_\mu(\C)$, then there is a domain $D$ containing $G_\mu(\C)$ such that for all $y \in D$, $\Re y \neq 0$, $\sgn \Im G_{\mu}^{-1}(y) \neq \sgn \Im y$ if and only if $y \in G_\mu(\C)$.
%\end{propo}
%\begin{proof}
%
%
%	We first show that  $G_\mu^{-1}$ has no turning points  (points where $(G_\mu^{-1})'$ vanishes) in  $G_\mu(\C)$ except possibly at $G_\mu(a)$ and $G_\mu(b)$ for $(a,b) = \supp\mu$.  Note that
%%
%	$$(G_\mu^{-1})'(y) = {1 \over G_\mu'(G_\mu^{-1}(y))}.$$
%%
%From  \eqref{sqrtcauchy} the only points where $G_\mu'$ can blow up are at the endpoints  $(a,b)$ for square root decaying measures. 
%
%	Because of Proposition\ \ref{realsign}, we know that $\Im G_{\mu}(z)$ is  negative for $\Im z > 0$ and positive for $\Im z < 0$.  Therefore, for $y\in G_\mu(\C)$, $\sgn \Im G_{\mu}^{-1}(y) \neq \sgn \Im y$. Moreover, since $G_\mu^{-1}$ is analytic in a domain containing $G_\mu(\C)$, and since no point in the complex plane on $G_\mu^\pm(\supp \mu)$ is a turning point, it is true that $\sgn \Im G_{\mu}^{-1}(y) = \sgn \Im y$ for $y \in D$ outside $G_\mu(\C)$.
%
%
%\end{proof}

%For free probability operations, we will prove below that the constructed point set lies in such a $D$.  

If we assume the measure is smoothly decaying (guaranteed if the input measures satisfy the hypotheses of Theorem~\ref{smoothgamma}), then we know precisely the form of its Cauchy transform, but we do not know the relevant coefficients of the expansion.  The following algorithm computes these coefficients by applying least squares to the equation
	$$G_\mu(G_\mu^{-1}(y)) = y,$$
which is valid for $y \in G_\mu(\C)$.

\Algorithm{smoothlydecaying}  Compute smoothly decaying measure
	\given $G_\mu^{-1}$, point cloud $\mathbf y_m$ inside $G_\mu(\C) \cap\C^+$ and positive integer $n$;
	\return a representation of $\mu$ that is smoothly decaying;
	\step Compute $\psi_k$ by solving the following system in a least squares sense:
$$ -2 \pi i 	\left[\sum_{k = 1}^n \psi_k \left({i - G_{\mu}^{-1}(y_j) \over i + G_{\mu}^{-1}(y_j)}\right)^k  -\sum_{k = 1}^n (-)^k \psi_k\right] \approx y_j.$$
%$$ -2 \pi i 	\left[\sum_{k = 0}^\infty \psi_k \left({i - z \over i + z}\right)^k  -\sum_{k = 0}^\infty (-)^k \psi_k\right]$$
%
	\step Define $\psi_0 = - 2 \Re \sum_{k = 1}^n (-1)^k  \psi_k$ and $\psi_{-k} = \bar{\psi}_k$.  Then
	$$\ud\mu \approx \sum_{k = -n}^n \psi_k \left({i - x \over i + x}\right)^k \ud x.$$

We now prove that, under broad conditions on $\mathbf y_m$, this algorithm will converge to the true coefficients $\psi_k$.  	

%Though we omit the details, it is straightforward to prove that the point sets $\mathbf y_M$ that we generate below satisfy the conditions of the following theorem, due to the analyticity of the operations involved.  

\begin{Def}
	We say that a smoothly decaying measure $\mu_n$ converges in mapped ${\rm L}^2$ to $\mu$ if, for 
	$$d\mu_n = \psi_n(x) dx\hbox{ and } d \mu = \psi(x) dx,$$
we have (for the ${\rm L}^2$ norm on the unit circle)
	$$\left\|\psi_n\left(i {1 - z \over 1 + z}\right) - \psi\left(i {1 - z \over 1 + z}\right) \right\| \rightarrow 0.$$
	We say that a square root decaying measure $\mu_n$ converges in mapped ${\rm L}^2$ to $\mu$ if $(a,b) = \supp \mu = \supp \mu_n$,  and, for
	$$d\mu_n = \psi_n(x) \sqrt{x - a} \sqrt{b - x} dx\hbox{ and } d \mu = \psi(x)  \sqrt{x - a} \sqrt{b - x}  dx,$$
we have
	$$\left\|\psi_n\left(M_{(a,b)}(J(z))\right) - \psi\left(M_{(a,b)}(J(z))\right) \right\| \rightarrow 0.$$

\end{Def}

\begin{Th}\label{methconv}
	Suppose that $\mu$ is an invertible smoothly decaying measure, and $\{{\mathbf y}_m\}$ are a sequence of sets of $m$ points lying inside $G_\mu(\C) \cap \C^+$ which cover $G_\mu(\C) \cap \C^+$ as $m \rightarrow \infty$ at a sufficiently fast rate (see proof and Appendix C for precise definition).  Then there exists $m$ sufficiently large depending on $n$ so that the output of \algref{smoothlydecaying} converges in mapped ${\rm L}^2$ to $\mu$ as $n \rightarrow \infty$.
\end{Th}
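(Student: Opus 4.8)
The plan is to recast \algref{smoothlydecaying} as a complex linear least-squares problem for the coefficient vector $\psi^{[n]}=(\psi_1,\dots,\psi_n)$ and to show that, with $m=m(n)$ chosen large enough, its solution $\psi^{(n,m)}$ converges to the true coefficient sequence in $\ell^2$; by Parseval this is \emph{exactly} mapped $L^2$ convergence, since the Fourier coefficients of $\psi\!\left(i\tfrac{1-z}{1+z}\right)$ on the unit circle are precisely the $\psi_k$ of \eqref{realseries}. Because $\mu$ is an invertible smoothly decaying measure it is Schwartz, so the true coefficients $\{\psi_k\}$ decay faster than any power of $k$ and the truncation tail $\sum_{|k|>n}|\psi_k|^2$ is super-algebraically small. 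Writing $w_j:=u(G_\mu^{-1}(y_j))$, the model in the algorithm is, up to the affine correction $-\sum_k(-)^k\psi_k$ enforcing $\psi(\infty)=0$, a degree-$n$ polynomial in $w_j$, so the whole question reduces to controlling $\big\|\psi^{(n,m)}-\psi^{[n]}\big\|_{\ell^2}$ together with this tail.

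First I would bound the residual produced by the \emph{exact truncated} coefficients $\psi^{[n]}$. The full series underlying \eqref{smoothlycauchy} satisfies $G_\mu(G_\mu^{-1}(y_j))=y_j$ identically, so substituting $\psi^{[n]}$ into the model leaves a residual at each $y_j$ equal to minus the truncation tail of that series evaluated at $w_j$. For $y_j\in G_\mu(\C)\cap\C^+$ this is a geometrically convergent remainder bounded by $\sum_{k>n}|\psi_k|\,\rho_j^{\,k}$ with $\rho_j<1$; it is therefore spectrally small, and \emph{uniformly} so provided the points keep a fixed distance from the boundary of $G_\mu(\C)$, where $\rho_j\to 1$ (there $z_j=G_\mu^{-1}(y_j)$ approaches the real axis and $|w_j|\to 1$). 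The penetration of the cloud toward that boundary as $m$ grows must thus be offset by taking $n$ large.

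The crux is a stability estimate for the least-squares operator: a degree-$n$ model that is small in the discrete norm $\big(\sum_{j=1}^m|\cdot|^2\big)^{1/2}$ over the cloud must have a small coefficient vector. I would prove this in two steps. (i) On the finite-dimensional space of polynomials of degree $\le n$, the $L^2$-norm over the two-dimensional region $\Omega:=u\big(G_\mu^{-1}(\overline{G_\mu(\C)\cap\C^+})\big)$ dominates the $\ell^2$ coefficient norm, because $\Omega$ has nonempty interior and all norms on a finite-dimensional space are equivalent; call the constant $C_n$. (ii) A sampling/quadrature inequality shows that once the cloud is dense enough in $\Omega$ --- fill distance small relative to $n$, which is what the ``sufficiently fast covering rate'' hypothesis quantifies (Appendix~C) --- the discrete norm is comparable to the continuous $L^2(\Omega)$ norm on this polynomial space. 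Chaining (i) and (ii) yields $\|\text{coeffs}\|_{\ell^2}\le C_n'\,\|\text{model}\|_{\mathrm{disc}}$.

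Finally I would assemble the estimate. Since the least-squares solution minimizes the discrete residual, its residual is no larger than that of $\psi^{[n]}$, hence spectrally small; the difference of the two models is then a degree-$n$ model small in the discrete norm, so the stability estimate bounds $\big\|\psi^{(n,m)}-\psi^{[n]}\big\|_{\ell^2}$ by $C_n'$ times a spectrally small quantity. Adding the truncation tail and invoking Parseval gives mapped $L^2$ convergence (the recovered $\psi_0$ and $\psi_{-k}=\bar\psi_k$ follow automatically). I expect the main obstacle to be the quantitative balancing hidden in ``sufficiently fast rate'': the stability constant $C_n'$ degrades with the degree $n$ because the region $\Omega$ abuts the unit circle --- its boundary being the image of the boundary of $G_\mu(\C)$ --- where the underlying Vandermonde-type system becomes ill-conditioned. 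One must therefore let $m$, the fill rate, and the permitted penetration toward the boundary all grow with $n$ fast enough that $C_n'$ times the residual still tends to zero. Making this balance precise --- and pinning down the exact covering rate --- is the technical heart of the argument, which is what Appendix~C supplies.
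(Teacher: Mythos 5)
Your overall architecture is the same as the paper's: change variables to $w_j=u(G_\mu^{-1}(y_j))$ so that the system becomes a polynomial (modified Vandermonde) system on the unit disk, bound the residual of the exact truncated coefficients by the series tail (super-algebraically small since an invertible smoothly decaying measure is Schwartz), invoke quasi-optimality of least squares, and close with a stability bound on the discrete system. One bookkeeping point you gloss over but the paper makes explicit: it first adjoins the conjugate points $\bar{\mathbf y}_m$, which by the symmetry $\psi_{-k}=\bar\psi_k$ does not change the computed approximation, and this is what turns the system into a genuine Vandermonde system in the basis $z^k-(-1)^k$ at points covering the \emph{full} unit disk. Where you genuinely diverge is the stability estimate. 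The paper (Appendix B) extracts from any sufficiently covering cloud $n$ points within $1/n$ of the $n$-th roots of unity, so the Vandermonde matrix contains a perturbed DFT block and $\|V^+\|\le\sqrt n+\delta$; the resulting bound $(1+m(\sqrt n+\epsilon))\,\|f-P_n^\top P_nf\|$ closes because $m$ grows only algebraically in $n$ while the tail decays super-algebraically (Lemma~\ref{vandermondeconvergence} and Corollary~\ref{vandermondeconvergencevanish}, the latter handling the $f(-1)=0$ basis modification you call the ``affine correction'').

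Your substitute --- domination of the coefficient norm by $L^2(\Omega)$ plus a Marcinkiewicz--Zygmund-type sampling inequality --- is workable but, as written, has a hole at its central step: ``all norms on a finite-dimensional space are equivalent'' produces a constant $C_n$ with \emph{no growth rate}, and since the residual here is only super-algebraically (not exponentially) small, an exponentially growing $C_n$ would sink the balance you describe. The gap is fillable, but only because $\Omega$ is (after the conjugation bookkeeping) the full closed unit disk, on which the monomials are orthogonal with $\|z^k\|_{L^2}^2=\pi/(k+1)$, giving $C_n=O(\sqrt n)$; that computation, or an equivalent quantitative statement, must actually be made rather than waved through by norm equivalence, and your step (ii) sampling inequality still needs its own proof with an $n$-dependence no worse than algebraic. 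The paper's near-DFT extraction delivers the same $O(\sqrt n)$ constant in one stroke and dispenses with the sampling inequality altogether, which is why it is the cleaner route.
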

\begin{proof}

	Because of symmetry, including $\bar{\mathbf y}_m$ in the least squares system will not alter the approximation of $\mu$.  Therefore, denote $[{\mathbf y}_m,\bar{\mathbf y}_m] = [y_1,\ldots,y_{2 m}]$.  Then
	 $$y_j = G_{\mu}\left(i {1 - z_j \over 1 + z_j}\right)$$
 for some (unknown) $z_j$ inside the unit circle.  Under this transformation, the least squares system takes the form
$$ -2 \pi i 	\sum_{k = 1}^n \psi_k (z_j^k - (-1)^k) \approx y_j.$$
This is a Vandermonde system, with an unusual distribution of points.  However, as $m \rightarrow \infty$, the points $z_j$ must cover the unit circle,   and therefore convergence follows from Corollary \ref{vandermondeconvergencevanish}.

\end{proof}

	We can adapt this approach to square root decaying measures as well; since, assuming that we know the support of the measure, we again know a precise form for its Cauchy transform.

\Algorithm{sqrtsing}  Compute  square root decaying measure
	\given $G_\mu^{-1}$, point cloud $\mathbf y_m$ inside $G_\mu(\C) \cap \{z : \Im z > 0\}$ and positive integer $n$;
	\return a representation of $\mu$ that is square root decaying;
	\step Compute  $(a,b) \approx \supp \mu$ using \algref{sqrtsupp};
	\step Compute (real-valued) $\psi_k$ by solving the following system in a least squares sense:
\begin{align*}
	\pi  \sum_{k = 1}^n \psi_{k-1} \Re J_+^{-1}\left(M_{(a,b)}^{-1}(G_{\mu}^{-1}(y_j))\right)^k  & \approx \Re y_j\hbox{ and }\cr
	\pi  \sum_{k = 1}^n \psi_{k-1} \Im J_+^{-1}\left(M_{(a,b)}^{-1}(G_{\mu}^{-1}(y_j))\right)^k  & \approx \Im y_j	,
 \end{align*}
%$$ -2 \pi i 	\left[\sum_{k = 0}^\infty \psi_k \left({i - z \over i + z}\right)^k  -\sum_{k = 0}^\infty (-)^k \psi_k\right]$$
%
where
	$$M_{(a,b)}(x) = {a + b \over 2} + {b - a \over 2} x;$$
	\step Then
	$$\ud\mu \approx  {2 \sqrt{x-a} \sqrt{b - x} \over b -a}  \sum_{k = 0}^\infty \psi_k U_k(M_{(a,b)}(x)) \ud x.$$

If $\supp \mu$ is calculated accurately, the convergence of \algref{sqrtsing} follows by the same logic as Theorem \ref{methconv}:
\begin{cor}\label{sqrtconv}
		Suppose that $\mu$ is an invertible smoothly decaying measure, and $\{{\mathbf y}_m\}$ are a sequence of sets of $m$ points lying inside $G_\mu(\C) \cap \C^+$ which cover $G_\mu(\C) \cap \C^+$ as $m \rightarrow \infty$ at a sufficiently fast rate (see proof and Appendix C for precise definition).  Then there exists $m$ sufficiently large depending on $n$ so that the output of \algref{sqrtsing} converges in mapped ${\rm L}^2$ to $\mu$  as $n \rightarrow \infty$.
\end{cor}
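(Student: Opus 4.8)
The plan is to mirror the proof of Theorem~\ref{methconv} essentially verbatim, replacing the Cayley-type map $x \mapsto (i-x)/(i+x)$ used there by the uniformizing map $J_+^{-1}\circ M_{(a,b)}^{-1}$ appropriate to the square root decaying case. I would assume, as the corollary's preamble does, that Step~1 of \algref{sqrtsing} returns the true endpoints $(a,b) = \supp\mu$. The first step is then to observe that every sample point admits a representation $y_j = G_\mu\!\left(M_{(a,b)}(J(w_j))\right)$ for a unique $w_j$ strictly inside the unit circle: indeed $G_\mu$ is single-valued on $G_\mu(\C)$, and for $y_j \in G_\mu(\C)\cap\C^+$ the sign condition forces $G_\mu^{-1}(y_j)$ into the lower half plane, whose image under $J_+^{-1}\circ M_{(a,b)}^{-1}$ lies in the open unit disk. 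The key identity is $J_+^{-1}(J(w)) = w$ for $|w|<1$, so that the quantity appearing inside \algref{sqrtsing}, namely $J_+^{-1}\!\left(M_{(a,b)}^{-1}(G_\mu^{-1}(y_j))\right)$, is precisely $w_j$.

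Under this substitution, inserting $z = M_{(a,b)}(J(w_j))$ into the series \eqref{sqrtcauchy} shows that the two real equations solved by \algref{sqrtsing} are exactly the real and imaginary parts of the single complex relation
$$\pi \sum_{k=1}^n \psi_{k-1}\, w_j^k \approx y_j.$$
Because the coefficients $\psi_{k-1}$ are real, solving the stacked real/imaginary system in the least squares sense is identical to solving this complex Vandermonde system in the nodes $w_j$ subject to the real-linear constraint $\psi_{k-1}\in{\mathbb R}$. As $m\to\infty$, the covering hypothesis on $\{{\mathbf y}_m\}$ inside $G_\mu(\C)\cap\C^+$ transports, through the conformal change of variables $y\mapsto w$, to a covering of the unit circle by the nodes $w_j$ at a correspondingly fast rate; mapped $L^2$ convergence of the recovered density then follows directly from Corollary~\ref{vandermondeconvergencevanish}, exactly as in Theorem~\ref{methconv}.

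The step I expect to require the most care is the transfer of the covering rate through the map $\Phi(y) = J_+^{-1}\!\left(M_{(a,b)}^{-1}(G_\mu^{-1}(y))\right)$: one must verify that $\Phi$ is a conformal bijection from $G_\mu(\C)\cap\C^+$ onto a region abutting the unit circle, with boundary distortion bounded above and below, so that ``sufficiently fast covering'' of the $y$-domain yields precisely the hypothesis of Corollary~\ref{vandermondeconvergencevanish} for the $w$-nodes. The second delicate point is the reliance on an accurate support: the entire reduction presumes $M_{(a,b)}$ carries $\supp\mu$ onto $[-1,1]$, so an error in $(a,b)$ corrupts both the basis $\{U_k\}$ and the nodes $w_j$. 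Since the corollary is stated conditionally on the support being computed accurately by \algref{sqrtsupp}, I would isolate this as a standing hypothesis rather than attempt a perturbation analysis of the endpoints; with accurate support in hand, the remainder is a direct translation of the smoothly decaying argument.
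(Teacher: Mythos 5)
Your proposal is correct and follows essentially the same route as the paper, which proves this corollary simply by asserting that, provided $\supp\mu$ is computed accurately, the argument of Theorem~\ref{methconv} carries over with the substitution $y_j = G_\mu\left(M_{(a,b)}(J(w_j))\right)$ reducing the system to a Vandermonde system in nodes $w_j$ covering the unit circle. Your fleshed-out version (including the observation that the stacked real/imaginary least-squares system with real $\psi_{k-1}$ is equivalent to the complex Vandermonde system) is a faithful expansion of that one-line argument.
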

    Thus we are left with one last task: computing $\supp \mu$.

\Algorithm{sqrtsupp} Compute the support of a square root decaying measure
	\given the first derivative of $G_\mu^{-1}$ and points $(a_0,b_0)$ satisfying $a_0 < \xi_a < \xi_b < b_0$;
	\return an interval $(a,b)$ approximating the support of $\mu$;
	\step Compute $\xi_a$ and $\xi_b$ by solving ${G_\mu^{-1}}'(y) =0$ using bisection, in the intervals $[a_0,0]$ and $[0,b_0]$.
	\step Set $(a,b) = (g(\xi_a),g(\xi_b))$.

In the next section, we choose $a_0$ and $b_0$ to guarantee convergence of the above algorithm when specializing to free addition.  

%	Let  $[a,b] = \supp \mu$ and
%%
%	$$\Gamma_\pm = G_{\mu}^\pm([a,b]),$$
%%
%on each of which  $G_{\mu}^{-1}(y)$ is real valued (as its image is $[a,b]$).  The Cauchy transform is real valued at the endpoints of the support (since the integrand of the Cauchy transform is integrable), therefore $\Gamma_{\pm}$ intersect the real axis at $\xi_a = G_{\mu}(a)$ and $\xi_b = G_{\mu}(b)$, as depicted in Figure \ref{RealDomain}.  Moreover, $G_{\mu}^{-1}(y)$ itself is real valued on the real axis (since  the Cauchy transform is real valued on $(b,\infty)$ and $(-\infty,a)$).  Therefore,  $\xi_a$ and $\xi_b$ are turning points of $G_{\mu}^{-1}(y)$.  As shown in Theorem~\ref{sqrtgamma}, these turning points are unique.  

%, and the Newton iteration is guaranteed to converge for sufficiently accurate guesses.

\begin{remark}
	In practice, we use Newton iteration with arbitrary initial guesses, which lacks guaranteed convergence.    While we only discussed the computation of $G_\mu^{-1}$, computing its derivative is straightforward since
	$$(G_\mu^{-1})'(y) = {1 \over G_\mu'(G_\mu^{-1}(y))}$$
and the  formul\ae\ for Cauchy transforms of admissible measures can be trivially differentiated. Similar logic allows us to compute the second derivative needed to perform the Newton iteration.  
\end{remark}

\section{Free additive convolution}

We now specialize the algorithm of the preceding section to free addition.  
To guarantee  convergence of the  algorithm, we must  accomplish two tasks:  generate a point set $\mathbf y_M$ so that \algref{choosepoints} succeeds and choose $(a_0,b_0)$ so that \algref{sqrtsupp} is guaranteed to converge.

%the following algorithm is guaranteed to work for free addition $\mu_A \bxp \mu_B$, assuming that the given points $\mathbf y_M$ are inside $G_{\mu_A}(\C) \cap G_{\mu_B}(\C)$.  Such a $\mathbf y_M$ will be generated by \algref{pointsetgen}. 

Under the hypotheses of Theorems~\ref{sqrtgamma} and \ref{smoothgamma}, if we can construct ${\mathbf y}_M$ that lie in $G_{\mu_A}(\C^-) \cap G_{\mu_B}(\C^-)$, then \algref{choosepoints} will successfully select the subset of points lying inside $G_{\mu_A \bxp \mu_B}(\C)$.    We generate one such set of points as follows:

\Algorithm{pointsetgen} Generate point clouds
	\given  a smoothly or square root decaying measure $\mu$;
	\return a set of points $\mathbf y_M$ lying in $G_{\mu_A}(\C^-) \cap G_{\mu_B}(\C^-)$;
	\step Generate a point cloud $\mathbf d_M$ on the unit disk by taking a tensor product of $\mathbf u_m$ with  $M_{(0,1)}^{-1}(\mathbf x_m)$, the $m$ Chebyshev points on $(0,1)$;
	\step If $\supp \mu$ is the real line, generate a set of points lying in the lower half plane by 
	$$\mathbf z_{\mu,M} = \{z \in i {1 - \mathbf d_M \over 1 + \mathbf d_M} : \Im z < 0\};$$ 
otherwise, if $\supp \mu$ is an interval $(a,b)$, generate a set of points lying off $\supp \mu$ in the lower half plane by
	$$\mathbf z_{\mu,M} = \{z \in M_{(a,b)}(J(\mathbf d_M)) : \Im z < 0\};$$ 
	\step Define
	 $$\mathbf y_M = \{y \in G_{\mu_A}(\mathbf z_{\mu_A,M}) : G_{\mu_B}(G_{\mu_B}^{-1}(y)) = y\}.$$ 

We can now prove convergence of the full algorithm for the Schwartz class case:
\begin{Th}\label{smoothconvtotrue}
	Suppose that $\mu_A$ and $\mu_B$ satisfy the hypotheses of Theorem~\ref{smoothgamma}.  Then the output of \algref{computemeasure} converges in mapped ${\rm L}^2$ to $\mu_C = \mu_A \bxp \mu_B$ with
	$$G_{\mu_C}^{-1}(y) = G_{\mu_A}^{-1}(y) + G_{\mu_B}^{-1}(y) - {1 \over y}$$
when the assumed form of the measure is smoothly decaying and  ${\mathbf y}_M$ is computed via \algref{pointsetgen},  provided that $M \rightarrow \infty$ grows  sufficiently fast with $n \rightarrow \infty$.  
\end{Th}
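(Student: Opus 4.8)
The plan is to reduce the entire statement to Theorem~\ref{methconv} by checking that the point cloud produced by \algref{pointsetgen} and then pruned by \algref{choosepoints} lands inside $G_{\mu_C}(\C) \cap \C^+$ and covers it at the rate that theorem demands. First I would invoke Theorem~\ref{smoothgamma}: under its hypotheses $\mu_C = \mu_A \bxp \mu_B$ is an invertible Schwartz (hence smoothly decaying) measure, so Step~2 of \algref{computemeasure} correctly dispatches to \algref{smoothlydecaying}, and the claimed identity $G_{\mu_C}^{-1}(y) = G_{\mu_A}^{-1}(y) + G_{\mu_B}^{-1}(y) - 1/y$ holds on $G_{\mu_C}(\C)$, with $g := G_{\mu_A}^{-1}(y) + G_{\mu_B}^{-1}(y) - 1/y$ single-valued throughout $G_{\mu_A}(\C) \cap G_{\mu_B}(\C)$.

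Next I would verify that \algref{pointsetgen} generates $\mathbf y_M \subset G_{\mu_A}(\C^-) \cap G_{\mu_B}(\C^-)$. Since $G_{\mu_A}$ sends $\C^-$ into $\C^+$, the raw samples $G_{\mu_A}(\mathbf z_{\mu_A,M})$ lie in $G_{\mu_A}(\C^-) \subset \C^+$; the filter $G_{\mu_B}(G_{\mu_B}^{-1}(y)) = y$ retains exactly those $y$ for which the correct single-valued branch of $G_{\mu_B}^{-1}$ is defined, and because $y \in \C^+$ its preimage lies in $\C^-$, so the retained points also belong to $G_{\mu_B}(\C^-)$. I would then check that \algref{choosepoints} prunes this set down to precisely $G_{\mu_C}(\C) \cap \C^+$: by the sign characterization in Theorem~\ref{smoothgamma}, a point $y \in G_{\mu_A}(\C) \cap G_{\mu_B}(\C)$ satisfies $\sgn \Im g(y) \neq \sgn \Im y$ if and only if $y \in G_{\mu_C}(\C)$, which is exactly the test in Step~1 of \algref{choosepoints}; hence the survivors are the elements of $\mathbf y_M$ lying in the region $D = G_{\mu_C}(\C^-)$.

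The remaining and most delicate step is to show these survivors cover $G_{\mu_C}(\C) \cap \C^+$ at the rate required by Theorem~\ref{methconv}. In the disk coordinates used there the relevant quantity is $z_j = u(G_{\mu_C}^{-1}(y_j))$ with $u(w) = (i - w)/(i + w)$, and convergence of the associated Vandermonde system (Corollary~\ref{vandermondeconvergencevanish}) requires that these $z_j$ become dense on the unit circle fast enough as $M \to \infty$. I would argue that the tensor-product grid $\mathbf d_M$ (uniform in angle, Chebyshev in radius) maps under $w \mapsto i(1-w)/(1+w)$ and then $G_{\mu_A}$ to a set filling $G_{\mu_A}(\C^-)$ densely, and since $G_{\mu_C}(\C^-) \subset G_{\mu_A}(\C^-)$ with the inclusion strict away from the origin (Theorem~\ref{smoothgamma}), the subset landing in $D$ fills $D$, so its $u \circ G_{\mu_C}^{-1}$ image fills the unit circle at a comparable rate. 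The \emph{main obstacle} is the behaviour near $y = 0$: this is where $G_{\mu_A}(\C)$ and $G_{\mu_B}(\C)$ touch and where $G_{\mu_C}^{-1}(y) \sim 1/y$ blows up, driving the $z_j$ toward the unit circle, so coverage there must be controlled quantitatively — precisely the ``sufficiently fast'' growth of $M$ relative to $n$ made precise in Appendix~C. Once this covering estimate is established, Theorem~\ref{methconv} applies verbatim and delivers convergence in mapped ${\rm L}^2$ to $\mu_C$.
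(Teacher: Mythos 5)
Your proposal follows essentially the same route as the paper's (very terse) proof: invoke Theorem~\ref{smoothgamma} to get that $\mu_C$ is an invertible Schwartz measure and that ${\mathbf y}_M$ lies in $G_{\mu_A}(\C^-)\cap G_{\mu_B}(\C^-)$, then verify the density/coverage hypothesis so that Lemma~\ref{vandermondeconvergence} and Theorem~\ref{methconv} apply. Your expansion is faithful and in fact more careful than the paper, which compresses your entire third paragraph --- including the genuinely delicate coverage estimate near $y=0$ --- into the single phrase that analyticity of the operations ensures a ``nice'' density.
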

\begin{proof}
	Theorem~\ref{smoothgamma} ensures that $\mu_C = \mu_A \bxp \mu_B$ is an invertible Schwartz measure, and that ${\mathbf y}_M$ lies inside $G_{\mu_A}(\C^-) \cap G_{\mu_B}(\C^-)$.  The analyticity of the operations in \algref{pointsetgen} ensure that ${\mathbf y}_M$ has a ``nice'' density, thus the hypotheses of Lemma~\ref{vandermondeconvergence} and Theorem~\ref{methconv} are satisfied.  
\end{proof}

We now choose $a_0$ and $b_0$ so that \algref{sqrtsupp} converges:

\begin{propo}
	Suppose $\mu_C = \mu_A \bxp \mu_B$ where $\mu_A$ and $\mu_B$ satisfy the hypotheses of Theorem~\ref{sqrtgamma}.  \algref{sqrtsupp} will converge  to $\supp \mu$ with the choice
	 $$a_0 = \max(G_{\mu_A}(\min\supp \mu_A),G_{\mu_B}(\min\supp \mu_B))$$
 and
	$$b_0 = \min(G_{\mu_A}(\max\supp \mu_A),G_{\mu_B}(\max\supp \mu_B)).$$
\end{propo}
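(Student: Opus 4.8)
The plan is to verify the two ingredients that make the bisection in \algref{sqrtsupp} well-posed and convergent: that the prescribed $(a_0,b_0)$ genuinely brackets the turning points $\xi_a,\xi_b$, and that ${G_{\mu_C}^{-1}}' = g'$ changes sign exactly once in each of $[a_0,0]$ and $[0,b_0]$. Adopting the notation $\xi^A_a = G_{\mu_A}(\min\supp\mu_A)$, $\xi^A_b = G_{\mu_A}(\max\supp\mu_A)$ (and similarly for $B$) from the proof of Theorem~\ref{sqrtgamma}, the prescribed choice is exactly $a_0 = \max(\xi^A_a,\xi^B_a)$ and $b_0 = \min(\xi^A_b,\xi^B_b)$. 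Step~2 of the proof of Theorem~\ref{sqrtgamma} already furnishes the chain $a_0 < \xi_a < 0 < \xi_b < b_0$, so the bracketing hypothesis $a_0 < \xi_a < \xi_b < b_0$ of \algref{sqrtsupp} comes for free. I would then note that on $(0,b_0)\subset(0,\xi^A_b)\cap(0,\xi^B_b)$ both $G_{\mu_A}^{-1}$ and $G_{\mu_B}^{-1}$ are real and smooth (they map back to points beyond the respective supports), and symmetrically on $(a_0,0)$; hence $g(y)=G_{\mu_A}^{-1}(y)+G_{\mu_B}^{-1}(y)-1/y = G_{\mu_C}^{-1}(y)$ is a smooth real function there, with $g'(y)\sim -1/y^2\to-\infty$ as $y\to0^\pm$ by \eqref{poleplusanalytic}.

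Next I would pin down the sign of $g'$ at the outer endpoints. Assume $b_0 = \xi^A_b$ (the case $b_0=\xi^B_b$ is symmetric under $A\leftrightarrow B$). Since $\xi^A_b$ is a turning point of $G_{\mu_A}^{-1}$, Proposition~\ref{sqrtturning} gives ${G_{\mu_A}^{-1}}'(b_0)=0$, so that
$$g'(b_0) = {G_{\mu_B}^{-1}}'(b_0) + {1 \over b_0^2} = R_{\mu_B}'(b_0) > 0$$
by Proposition~\ref{derjensen}. The identical computation at the left endpoint yields $g'(a_0) = R_{\mu_B}'(a_0) > 0$ (with $A,B$ interchanged if $a_0 = \xi^B_a$). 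Thus on $[0,b_0]$ the derivative $g'$ runs from $-\infty$ near $0$ to a strictly positive value at $b_0$, and on $[a_0,0]$ from a strictly positive value at $a_0$ to $-\infty$ near $0$, so each interval contains at least one sign change of $g'$.

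The crux --- and the only step requiring more than endpoint arithmetic --- is uniqueness of that zero in each subinterval, which is precisely what guarantees that bisection locks onto $\xi_b$ (resp.\ $\xi_a$) rather than stalling between two roots. Here I would recycle the uniqueness of the real-valued curve $\Gamma^\uparrow$ from Theorem~\ref{sqrtgamma}: by \eqref{noturning}, $g'$ cannot vanish off the real axis, and any second real zero of $g'$ in $(0,b_0)$ would be a further turning point of $g$ from which an additional real-valued curve would emanate into $G_{\mu_A}(\C^-)\cap G_{\mu_B}(\C^-)$, contradicting the uniqueness of $\Gamma^\uparrow$. Hence $\xi_b$ is the unique zero of $g'$ in $(0,b_0)$ and $\xi_a$ the unique one in $(a_0,0)$, and the bisection of Step~1 converges to each. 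Finally, since $\xi_a = G_{\mu_C}(\min\supp\mu_C)$ and $\xi_b = G_{\mu_C}(\max\supp\mu_C)$ while $g = G_{\mu_C}^{-1}$, Step~2 returns $(g(\xi_a),g(\xi_b)) = (\min\supp\mu_C,\max\supp\mu_C) = \supp\mu_C$ as claimed. The main obstacle is this uniqueness argument; the bracketing and the endpoint signs are near-direct transcriptions of facts already proved in Theorem~\ref{sqrtgamma} together with the positivity $R_\mu' > 0$ of Proposition~\ref{derjensen}.
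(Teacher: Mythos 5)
Your proof is correct and takes essentially the same route as the paper: the paper's own proof is a single sentence observing that, by the proof of Theorem~\ref{sqrtgamma}, $g'$ vanishes in $(a_0,b_0)$ only at $\xi_a$ and $\xi_b$, whence convergence of bisection follows. You have simply made explicit the bracketing, endpoint-sign, and uniqueness-of-the-real-curve facts that the paper leaves implicit in that citation.
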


\begin{proof}
	From the proof of Theorem~\ref{sqrtgamma}, we know that $g'$ only vanishes at $\xi_a$ and $\xi_b$ between $(a_0,b_0)$; thus, convergence of bisection follows.  

\end{proof}

\begin{Th}\label{sqrtconvtotrue}
	Suppose that $\mu_A$ and $\mu_B$ satisfy the hypotheses of Theorem~\ref{sqrtgamma}.  Then the output of \algref{computemeasure} converges in mapped ${\rm L}^2$ to $\mu_C = \mu_B \bxp \mu_C$ with
	$$G_{\mu_C}^{-1}(y)  = G_{\mu_A}^{-1}(y) + G_{\mu_B}^{-1}(y) - {1 \over y}$$
when the assumed form of the measure is square root decaying,   ${\mathbf y}_M$ is computed by \algref{pointsetgen} and $(a_0,b_0)$ are defined as above, provided that  $M \rightarrow \infty$ grows  sufficiently fast with $n \rightarrow \infty$.  
\end{Th}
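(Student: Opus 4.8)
The plan is to follow the architecture of the proof of Theorem~\ref{smoothconvtotrue}, substituting the square-root decaying regularity result for the Schwartz one and inserting a single additional stage to certify the support endpoints, which have no counterpart in the smoothly decaying case.

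First I would apply Theorem~\ref{sqrtgamma}. Under the stated hypotheses it yields three facts: that $\mu_C = \mu_A \bxp \mu_B$ is a precisely square-root decaying invertible measure; that on $G_{\mu_A}(\C) \cap G_{\mu_B}(\C)$ its inverse Cauchy transform coincides with $g(y) = G_{\mu_A}^{-1}(y) + G_{\mu_B}^{-1}(y) - 1/y$; and that $y$ lies in $G_{\mu_C}(\C)$ exactly when $\sgn \Im g(y) \neq \sgn \Im y$. Since the latter is precisely the test applied by \algref{choosepoints}, I would next verify that the raw cloud from \algref{pointsetgen} lies in $G_{\mu_A}(\C^-) \cap G_{\mu_B}(\C^-)$: the algorithm maps the unit disk off the support of $\mu_A$ into the lower half plane, applies $G_{\mu_A}$, and retains only those $y$ with $G_{\mu_B}(G_{\mu_B}^{-1}(y)) = y$, forcing single-valuedness on the $\mu_B$ side as well. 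The sign criterion then guarantees that \algref{choosepoints} keeps exactly the points in $G_{\mu_C}(\C)$.

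The new stage is the support computation. Here I would invoke the preceding Proposition: with the prescribed $(a_0,b_0)$, $g'$ vanishes in $(a_0,b_0)$ only at the turning points $\xi_a$ and $\xi_b$, so the bisection in \algref{sqrtsupp} converges to them, and the identification $a_C = g(\xi_a)$, $b_C = g(\xi_b)$ from the proof of Theorem~\ref{sqrtgamma} shows the computed interval converges to $\supp \mu_C$. Because the inverse Cauchy transforms of $\mu_A$ and $\mu_B$ are computed to spectral accuracy in Section~\ref{InvC}, $g$ and $g'$ are too, and the endpoints are recovered to spectral accuracy. With accurate endpoints I would then appeal to Corollary~\ref{sqrtconv}, whose proof reduces the real and imaginary parts of the least-squares system in \algref{sqrtsing} to a non-standard Vandermonde system and establishes mapped-$\mathrm{L}^2$ convergence once the retained points cover $G_{\mu_C}(\C) \cap \C^+$ at a sufficiently fast rate; the required ``nice'' density of $\mathbf y_M$ follows, as in Theorem~\ref{smoothconvtotrue}, from the analyticity of every map used in \algref{pointsetgen}, so that the hypotheses of Lemma~\ref{vandermondeconvergence} and Corollary~\ref{vandermondeconvergencevanish} hold. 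Chaining the three stages and letting $M \to \infty$ quickly enough relative to $n$ completes the argument.

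The main obstacle I anticipate is the coupling between the inexact support and the recovery step. Corollary~\ref{sqrtconv} is stated as if $(a,b)$ were known exactly, whereas here it carries the error of \algref{sqrtsupp}, and the Vandermonde nodes $J_+^{-1}(M_{(a,b)}^{-1}(G_{\mu_C}^{-1}(y_j)))$ depend on these endpoints. I would resolve this by a stability argument: the maps $M_{(a,b)}$ and $J_+^{-1}$ depend smoothly on $(a,b)$ away from the branch points, so a spectrally small endpoint error perturbs each node by a spectrally small amount, and this perturbation can be absorbed into the rate at which $M$ grows with $n$. Showing that such a perturbation does not spoil the asymptotic behaviour of the non-standard Vandermonde system --- whose nodes accumulate on the unit circle --- is the delicate point, and would be the technical heart of the proof.
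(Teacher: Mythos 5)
Your proposal follows essentially the same route as the paper's proof, which likewise chains Theorem~\ref{sqrtgamma}, the proposition fixing $(a_0,b_0)$, and Corollary~\ref{sqrtconv} via the analyticity/``nice density'' argument for \algref{pointsetgen} and Lemma~\ref{vandermondeconvergence}. The only difference is that you explicitly flag the coupling between the inexactly computed support and the Vandermonde nodes in \algref{sqrtsing}; the paper's proof silently treats $\supp \mu_C$ as computed exactly, so your stability remark is a legitimate refinement of a point the paper does not address rather than a departure from its argument.
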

\begin{proof}
	Theorem~\ref{sqrtgamma} ensures that $\mu_C = \mu_A \bxp \mu_B$ is an invertible square root decaying measure, and that ${\mathbf y}_M$ lies inside $G_{\mu_A}(\C^-) \cap G_{\mu_B}(\C^-)$.  The preceding proposition ensures that $\supp \mu_C$ is calculated via \algref{sqrtsupp}.  The analyticity of the operations in \algref{pointsetgen} ensure that ${\mathbf y}_M$ has a ``nice'' density, thus the hypotheses of Lemma~\ref{vandermondeconvergence} and Theorem~\ref{sqrtconv} are satisfied.  
\end{proof}

%\sotodoinline{remove following}
%To accomplish this, we use the following theorem:
%%
%\begin{Th}\label{subsetimage}
%
%\end{proof}

%	

\subsection{Numerical examples}

\begin{remark}
	Throughout the paper, we use mean zero and  variance ${1\over \sqrt 2}$ for  Gaussian distributions unless otherwise specified.   $S_n$ denotes an $n \times n$ random symmetric matrix, constructed by  generating a random matrix $A_n$ with Gaussian distributed entries and defining
	$$S_n = {A_n + A_n^\top \over \sqrt{2 n}}.$$
$Q_n$ denotes a random orthogonal matrix, generated by computing the QR decomposition of $A_n$.   Finally, we generate a histogram associated with a random matrix ensemble $B_n$ by computing the eigenvalues of 100 instances of $B_n$.
\end{remark}

In Figure \ref{SemiPlusGauss}, we plot the numerically calculated free addition $\mu_G \bxp \mu_S$ of a Gaussian distribution $\mu_G$ with a semicircle distribution $\mu_S$.  This distribution was shown in \cite{bryc2006spectral} to be the limiting eigenvalue distribution of a class of Markov matrices. The left graph contains a plot of a Gaussian distribution (dotted), semicircle distribution (dashed) and their free addition (plain).  The right graph compares the computed free addition with a histogram of $Q_{150} \Lambda_{150} Q_{150}^\top + S_{150}$, where $\Lambda_n$ is a $n \times n$ diagonal matrix whose entries are Gaussian distributed.

	\begin{figure}[tb]
  \begin{center}
	\includegraphics[width=.4\linewidth]{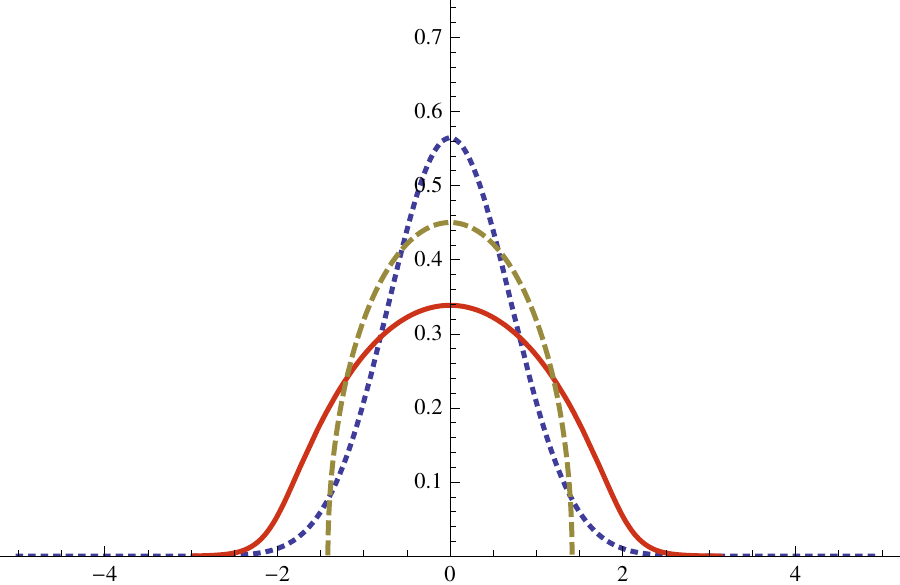}	 \includegraphics[width=.4\linewidth]{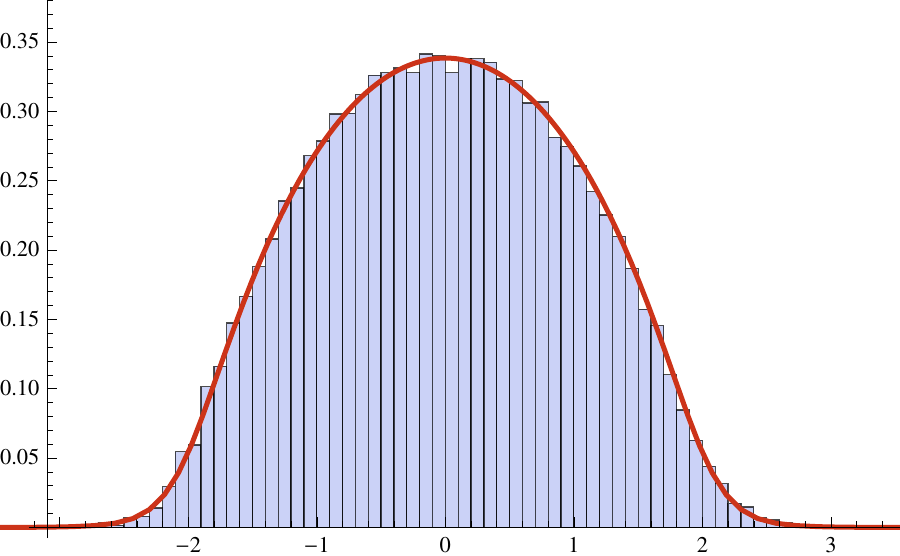}
  \end{center}
  \caption{Free addition of a Gaussian distribution with a semicircle distribution.  }
\label{SemiPlusGauss}
\end{figure}

	\begin{figure}[tb]
  \begin{center}
	\includegraphics[width=.4\linewidth]{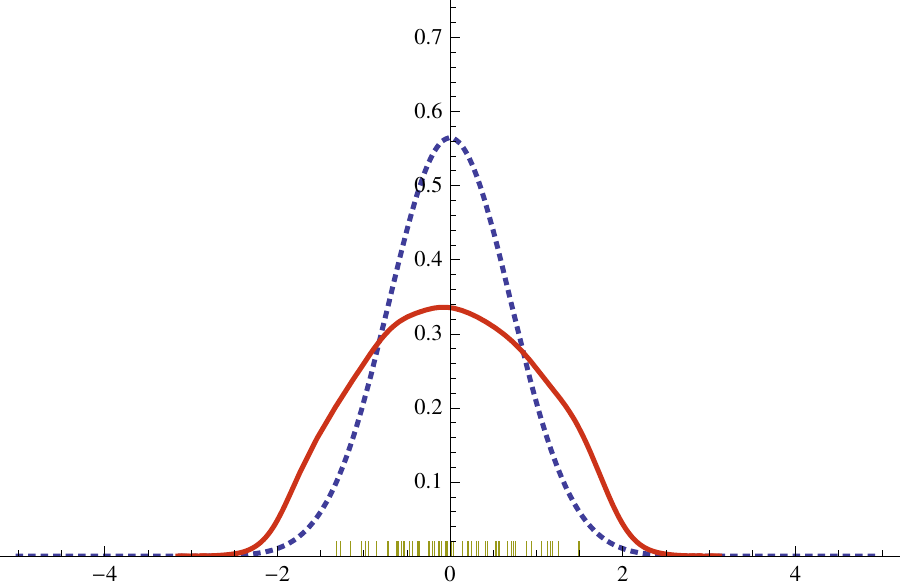}	 \includegraphics[width=.4\linewidth]{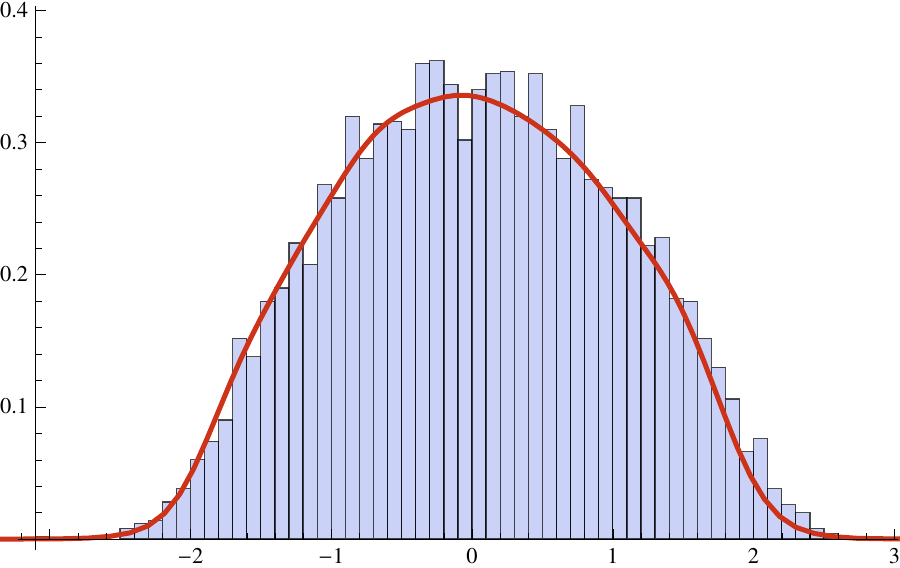}
  \end{center}
  \caption{Free addition of a Gaussian distribution with a single instance of an approximate semicircle distribution.   }
\label{DSemiPlusGauss}
\end{figure}

Often one does not have exact expressions for the limiting distributions of the eigenvalues, but rather, one can sample a single instance from the distribution.  In this case, the counting measure --- a sum of point measures --- over this single instance can be calculated.  In Figure \ref{DSemiPlusGauss} we repeat the experiment of Figure \ref{SemiPlusGauss} where the semicircle distribution is replaced with the counting measure $\mu_{A_{50}}$ of a single matrix $A_{50}$  drawn from $S_{50}$.  On the right, we compare the computed distribution with the histogram of $Q_{50} \Lambda_{50} Q_{50}^\top + A_{50}$, where $A_{50}$ is now a {\it fixed} matrix.

	\begin{figure}[tb]
  \begin{center}
	\includegraphics[width=.4\linewidth]{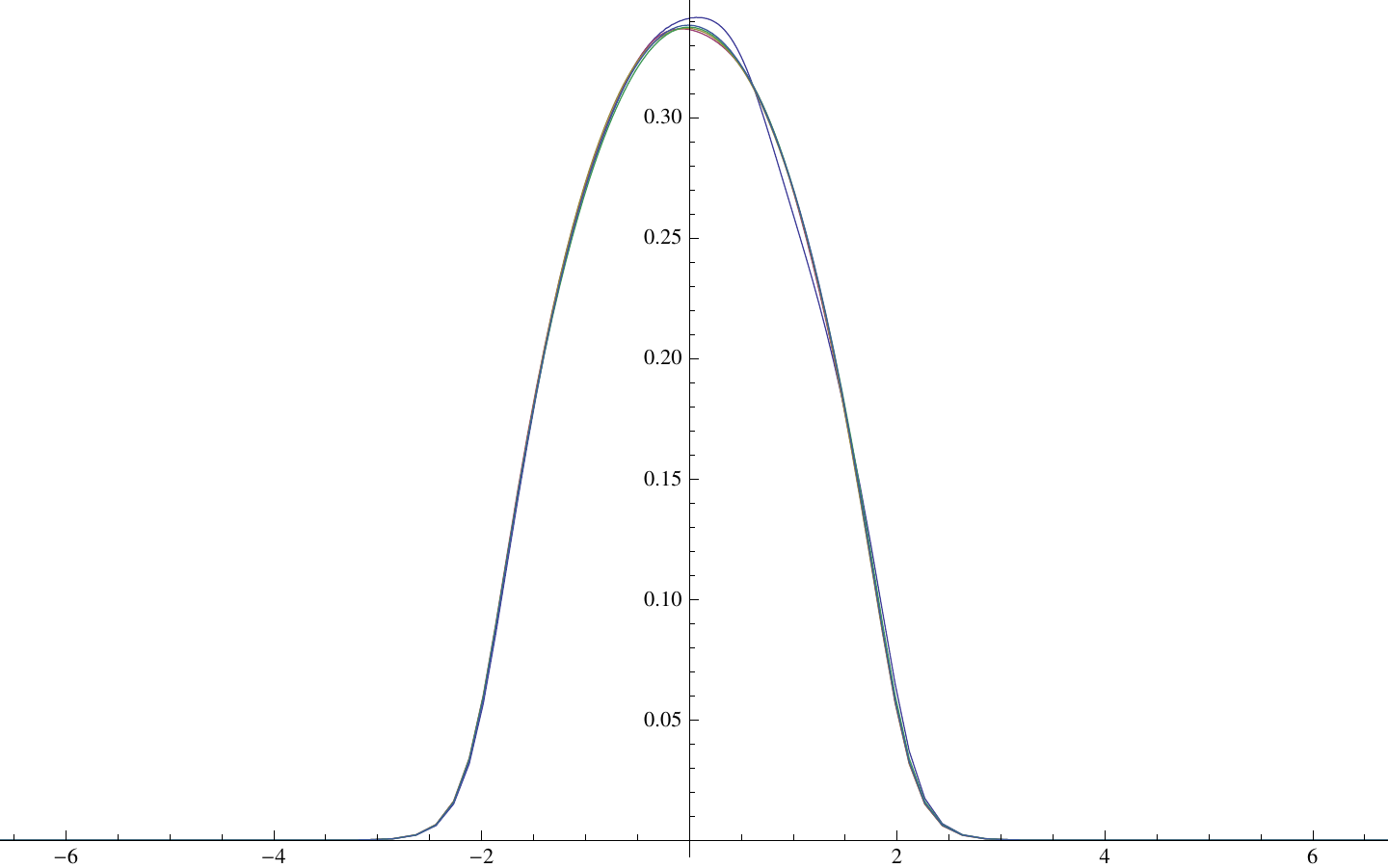}	 \includegraphics[width=.4\linewidth]{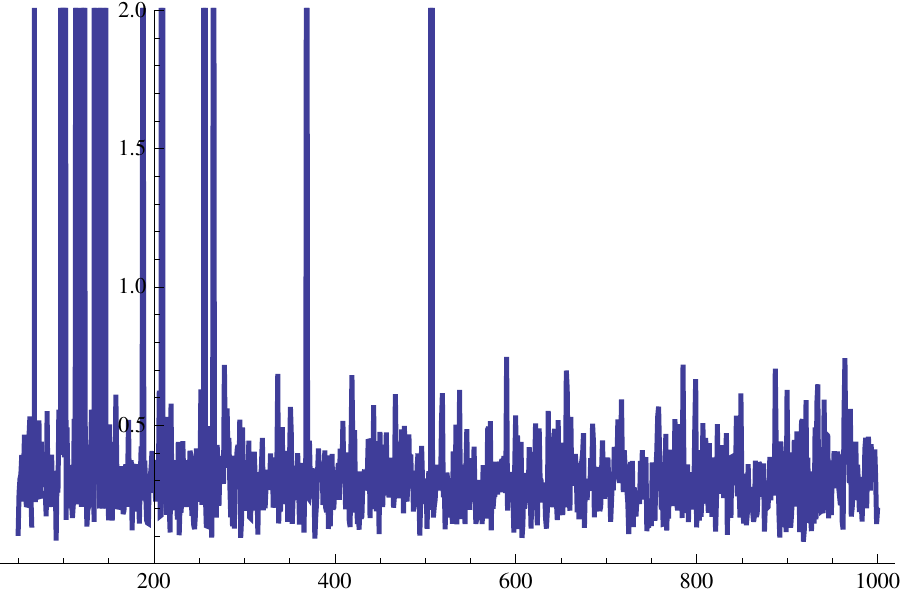}
  \end{center}
  \caption{Free addition of a Gaussian distribution with approximate semicircle distributions for $n = 100,200,\ldots,400$ (left).  The scaled (by $n$) Kolmogorov--Smirnov distance ($D_n = \sup x | F_n(x) - F(x) |$ where $F(x)$ is the distribution in Figure \ref{SemiPlusGauss} and $F_n$ is the distribution in \ref{DSemiPlusGauss}) between the cdfs illustrating convergence in the respective cumulative distribution functions (right). }
\label{DSemiPlusGaussConv}
\end{figure}

	As $n \rightarrow \infty$,  $\mu_{S_n} \bxp \mu_G$ will converge in some sense to $\mu_S \bxp \mu_G$, as seen in the right hand of Figure \ref{DSemiPlusGaussConv}.  We can estimate this growth by comparing the maximum difference of the cdf of computed measures for growing values of $n$.  In the right-hand side of Figure \ref{DSemiPlusGaussConv}, we plot this scaled by $n$, demonstrating that the convergence rate appears to be $O(n^{-1})$.

%rrtodoinline{Include this conjecture explicitely?  If so, rewrite it for what you had in mind.}

%\begin{conjecture}%
%Let $\mu_n$ be a counting measure for which we know that $\mu_n \rightarrow \mu_\infty$ in some sense and at some rate.  Then with high probability
%
%	$$\|{\mu_n \bxp \mu - \mu_\infty \bxp \mu}\| = O(n^{-1}).$$
%
%\end{conjecture}

%On the left, a plot of a Gaussian measure (dotted), one instance of a $50 \times 50$ random symmetric measure  (dashed) and their convolution (plain).  On the right, a comparison of the computed measure with a histogram computed by generating  (with dimension $50 \times 150$) a random orthogonal matrix $Q$, a diagonal matrix $\Lambda$ with random Gaussian distributed entries and a fixed symmetric matrix $A$ with Gaussian entries, and computing the eigenvalues of $Q \Lambda Q^\top + A$, repeating 100 times.

	\begin{figure}[tb]
  \begin{center}
	 \includegraphics[width=.4\linewidth]{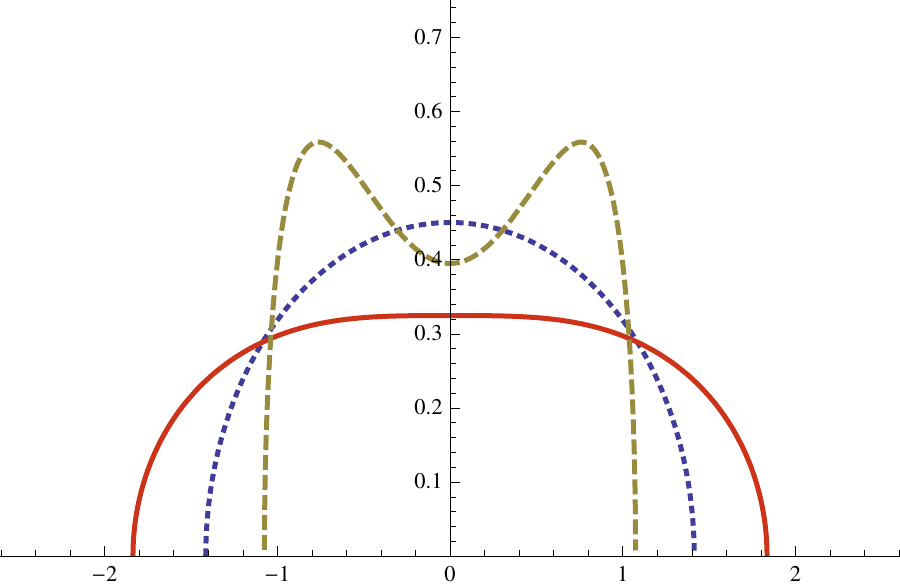}\includegraphics[width=.4\linewidth]{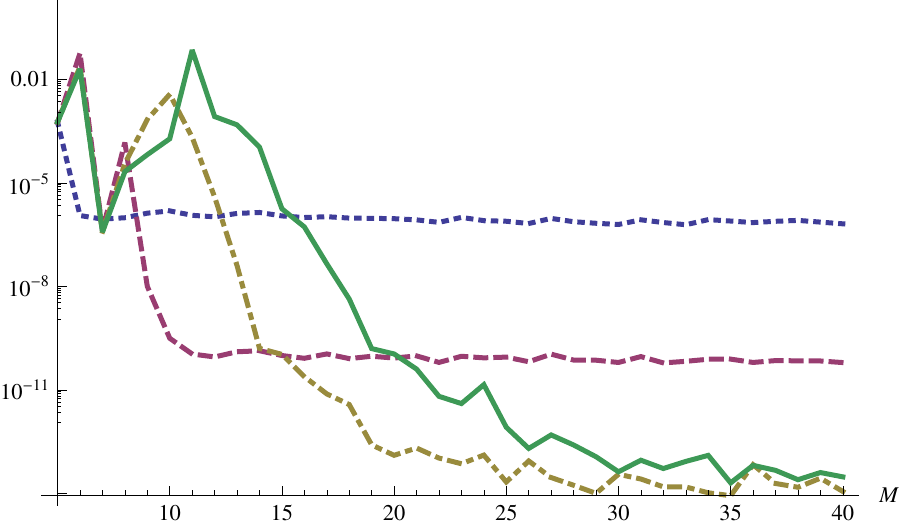}
  \end{center}
  \caption{Free addition of a Semicircle distribution with the equilibrium measure associated with the potential $V(x) = x^4$.  The pointwise error of compared to the exact solution for  $n = 20$ (dotted), 40 (dashed), 60 (dash--dotted) and 80 (plain).}
\label{SemiPlusQuartic}
\end{figure}

In Figure \ref{SemiPlusQuartic} we  compute a measure which is square root decaying.  Here we define $\mu_{4}$ as the equilibrium measure of the potential $V(x) = x^4$ (see \cite{st96} for definition of equilibrium measures), which we know in closed form \cite{d00}.  We then calculate $\mu_S \bxp \mu_{4}$ using \algref{computemeasure}.   There is no obvious way of generating a histogram for this measure; hence, unlike other examples, there is no known Monte Carlo approach for approximating $\mu_S \bxp \mu_{4}$.  However, this is an example which was calculated symbolically in \cite{re08}, hence we can compare our numerically computed measure with the exact measure.  We plot the error for  $n = 20$ (dotted), 40 (dashed), 60 (dash--dotted) and 80 (plain) as $M$ increases. Recall that $n$ is the number of coefficients in the Chebyshev representation of $\mu_S \bxp \mu_4$ while $M$ is the number of points in the point cloud used in the least-squares based measure recovery algorithm described in Algorithm 6. The error is computed by taking the maximum error over 100 Chebyshev points on the interval $\supp (\mu_S \bxp \mu_{4})$.

	\begin{figure}[tb]
  \begin{center}
	\includegraphics[width=.4\linewidth]{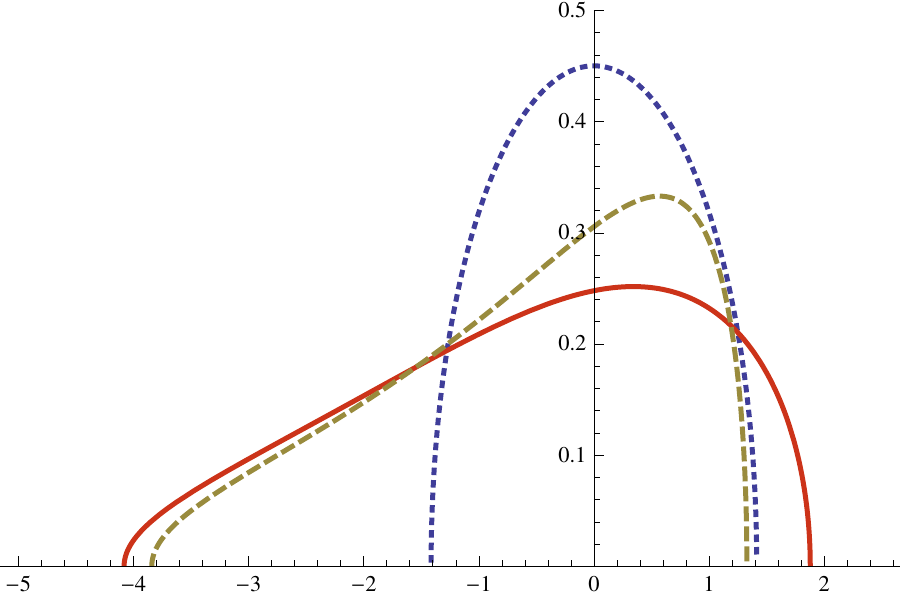}%	 \includegraphics[width=.4\linewidth]{Figures/SemiPlusGaussHist.pdf}
  \end{center}
  \caption{Free addition of a Semicircle distribution with the equilibrium measure associated with the potential $V(x) = e^x - x$.  }
\label{SemiPlusEM}
\end{figure}

In Figure \ref{SemiPlusEM}, we define $\mu_{EM}$ as the equilibrium measure of the potential $V(x) = e^x - x$ --- which we calculate numerically (in the required form) using the approach of \cite{SOEquilibriumMeasure} ---  and  then calculate $\mu_S \bxp \mu_{EM}$.  This is an example which cannot be computed symbolically, at least using the framework of \cite{re08}.

	\begin{figure}[tb] 
  \begin{center}
	\includegraphics[width=.4\linewidth]{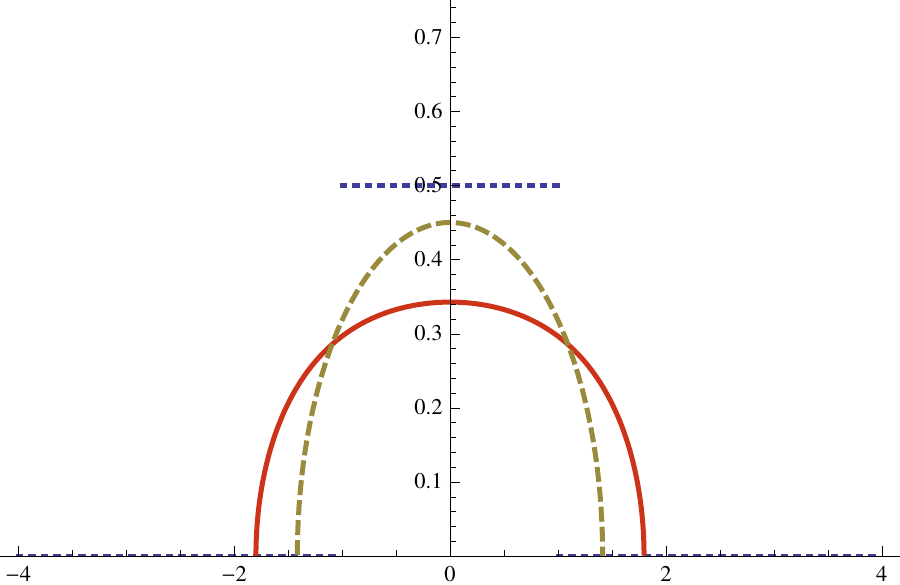}	 \includegraphics[width=.4\linewidth]{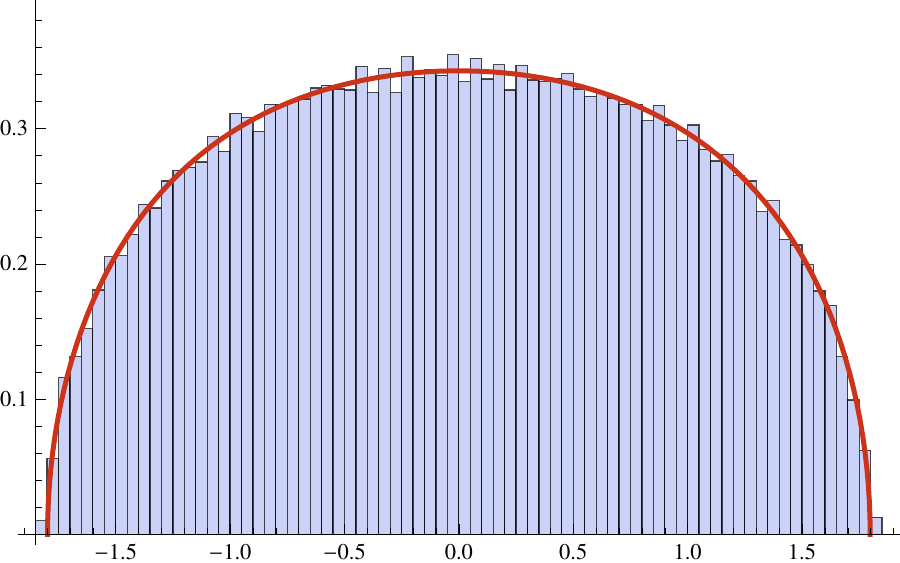}
  \end{center}
  \caption{Free addition of a step distribution with a semicircle distribution.   }
\label{SemiPlusSquare}
\end{figure}

%Finally, we define the step measure $\mu_O$ by
%%
%	 $$\mu_{O} = {1 \over 2} {\mathbf 1}_{(-1,1)}$$
%
Finally, in Figure \ref{SemiPlusSquare} we calculate the free addition of a semicircle distribution with a step distribution $\mu_S \bxp ( {1 \over 2} {\mathbf 1}_{(-1,1)})$, demonstrating that a square root decaying measure arises.  While ${1 \over 2}{\mathbf 1}_{(-1,1)}$ We do not need to use the ellipse method for this measure, as we can calculate its Cauchy transform and  inverse Cauchy transform explicitly:
	\begin{align*}
	G_{ {1 \over 2} {\mathbf 1}_{(-1,1)}}(z) & = {\log(1 + z) - \log(z-1) \over 2} \hbox{ and }\cr
	G_{ {1 \over 2} {\mathbf 1}_{(-1,1)}}^{-1}(y) & = {\coth {y \over 2} + \tanh{y \over 2} \over 2}.
	\end{align*}
  We compare the computed distribution with the histogram of
	$Q_{300} \Lambda_{300} Q_{300}^\top + S_{300},$
where $\Lambda_n$ is a diagonal matrix whose entries are evenly distribution on $(-1,1)$.

\section{Free multiplicative convolution and the S transform}

In the case where $\mu\neq \delta_0$ and the support of $\mu$ is contained in $[0, +\infty)$,
 %Let  $\mu \neq \delta_0$ be a  compactly supported probability measure with support contained in $[0, +\infty)$.
 one   also defines its {\it $T$-transform}
 $$T_\mu(z)=\int\f{x}{z-x}\ud \mu(x)  \qquad \textrm{for } z \notin \supp \mu.$$
% Let $[a,b]$ be the convex hull of the support of $\mu$. We have that:
% \begin{itemize} \item $\partial_z T_\mu(z)=  -\int\f{t}{(z-t)^2}\ud \mu(t)$ out of the support of $\mu$, which implies that $T_\mu$ is decreasing on each of the intervals $(-\infty, a)$ and $(b,+\infty)$, \item  $T_\mu<0$ on $(-\infty, a)$  and $T_\mu>0$ on $(b,+\infty)$, \item $T_\mu(z)\to 0$ as $|z|\to+\infty$,\end{itemize}
% it follows that $T_\mu(a^-):=\lim_{z\uparrow a}T_\mu(z)$ and $T_\mu(b^+):=\lim_{z\downarrow b}T_\mu(z)$ exist in respectively $[-\infty, 0)$ and $(0,+\infty]$ and $T_\mu$
%which realizes decreasing homeomorphisms from $(-\infty, a)$ onto $(T_\mu(a^-),0)$ and from $(b, +\infty)$ onto $(0, T_\mu(b^+))$.
%
The {\it $S$-transform}, defined as
%\begin{equation}\label{230909.14h22}
$$S_\mu(y):=(1+y)/(y {T_\mu^{-1}(y)}),$$
%\end{equation}
is the analogue  of the Fourier transform for  free multiplicative convolution $\bxt$. The free multiplicative convolution of two \pro measures $\mu_A$ and $\mu_B$ is denoted by the symbols $\bxt$ and can be characterized as follows.

Let $A_n$ and $B_n$ be independent $n \times n$ symmetric (or Hermitian) positive-definite random matrices that   are invariant, in law, by conjugation by any orthogonal (or unitary) matrix. Suppose that, as $n \rightarrow \infty$, $\mu_{A_{n}} \rightharpoonup \mu_{A}$ and $\mu_{B_{n}} \rightharpoonup \mu_{B}$. Then, free probability theory states \cite{voiculescu1987multiplication} that $\mu_{A_n \cdot B_n} \rightharpoonup \mu_{A} \bxt \mu_{B}$, a \pro measure which can be characterized in terms of the $S$-transform as
$$S_{\mu_A\bxt\mu_B}(z)= S_{\mu_A}(z)S_{\mu_B}(z).$$

	The T transform can be computed in the same way as the Cauchy transform; we only need to multiply the representation of the measure by $x$ beforehand.  The numerical method for calculating the inverse Cauchy transform proceeds as before.  From the relationship of the S transform, we know that
	$$T_{\mu_A\bxt\mu_B}^{-1}(y) = T_{\mu_A}^{-1}(y)T_{\mu_B}^{-1}(y) {y \over 1 + y}.$$
Note that $T_{\mu_A \bxt \mu_B} = G_{\mu_C}$, for the (non-probability) measure $\mu_C$ defined by
	$$\ud \mu_C(x) = x \ud[\mu_A \bxt \mu_B](x).$$
Therefore, we can use the \algref{computemeasure} to find $\ud \mu_C$, and in turn $\mu_A \bxt \mu_B$.   Similar to free addition, we use the point cloud $\mathbf y_M =T_{\mu_A}(\mathbf z_{\mu_A,M})$.  While we omit the proof of convergence, it should follow along the same lines as Theorems~\ref{smoothconvtotrue} and \ref{sqrtconvtotrue}.
	
%
%\begin{Th}\label{subsetimage mult}
%	$$T_{\mu_A\bxt\mu_B}(\C) \subset T_{\mu_A}(\C)\cap T_{\mu_B}(\C).$$
%\end{Th}
%\begin{proof}
%This is a direct consequence of the subordination  of the functions expressed via the relationship $T_{\mu_A}(F_{A}(z)) = T_{\mu_A \bxt \mu_B}(z) = T_{\mu_B}(F_{B}(z))$ derived in \cite[Theorem 3.5, pp. 157]{b98}.
%\end{proof}

\subsection{Numerical examples}

	\begin{figure}[tb]
  \begin{center}
	\includegraphics[width=.4\linewidth]{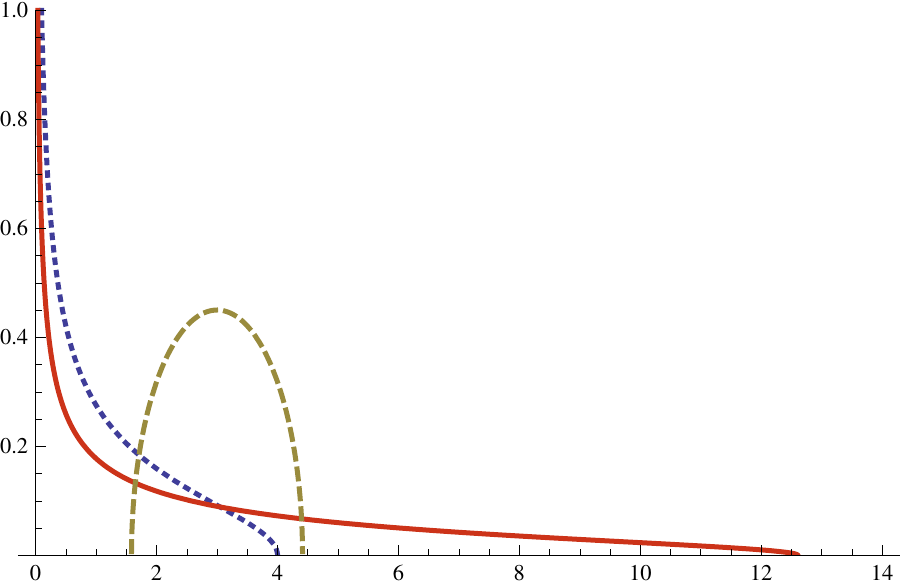}	 \includegraphics[width=.4\linewidth]{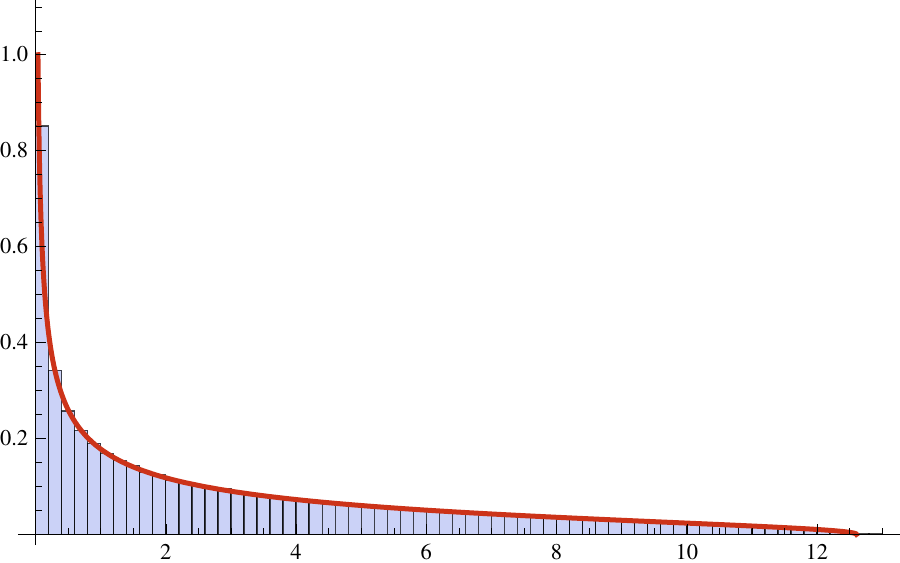}
  \end{center}
  \caption{Free times of a shifted semicircle distribution with a Mar\v{c}enko--Pastur distribution.   }
\label{ShiftedSemiPlusMP}
\end{figure}

In Figure \ref{ShiftedSemiPlusMP}, we consider the problem of computing a free product of a a shifted semicircle distribution with a singular Mar\v{c}enko--Pastur distribution
	$$\ud \mu_{MP}(x) = {\sqrt{4 - x} \over 2 \pi \sqrt{x}} \ud x.$$
  While this distribution is not admissible, it is when we multiply by $x$; as in the definition of the T-transform.  The procedure then works as before.  We compare the computed measure with a histogram of
  	$$B_{200} B_{200}^\top (S_{200} + 3 I),$$
  where $B_n = {1 \over \sqrt{n}} A_n$ and $A_n$ is an $n \times n$ random matrix with Gaussian distributed entries, now with mean zero and variance one.

\section{Free compression}

Let $B_n$ be the $n \times n$  matrix generated by  taking the upper left $n \times n$ block  of $Q_m A_m Q_m^\top$,  where $n \leq m$.   If the eigenvalues of $A_m$ tend to the distribution $\mu$, then the eigenvalues of $B_n$ tend to the {\it free compression} of $\mu$, i.e.,
	$$\mu_{B_n} \rightharpoonup {m \over n} \bxd \mu.$$

 Let $\alpha  \in (0,1]$.   We have that \cite{nica1996multiplication}
$$R_{\alpha \bxd \mu}(z) = R_{\mu}(\alpha z).$$
	Rearranging the definition of the R transform, we find that
	$$G_{\alpha \bxd \mu}^{-1}(y) = G_{\mu}^{-1}(\alpha y) + {1 \over y} - {1 \over \alpha y}.$$
Therefore, we can apply \algref{computemeasure} to compute $\alpha \bxd \mu$, with the point cloud $\mathbf y_M =G_{\mu}(\mathbf z_{\mu,M})$.  Again, we omit the proof of convergence.

\subsection{Numerical examples}

	In Figure \ref{GaussCompress}, motivated by the theoretical results in \cite{belinschi2010normal}, we compare the compute free compression of a Gaussian distribution with a histogram of the $\alpha 300 \times \alpha 300$ principal block of $Q_{300} \Lambda_{300} Q_{300}^\top$, where $\Lambda_n$ is an $n \times n$ diagonal matrix whose entries are Gaussian distributed.

	\begin{figure}[tb]
  \begin{center}
	\includegraphics[width=.9\linewidth]{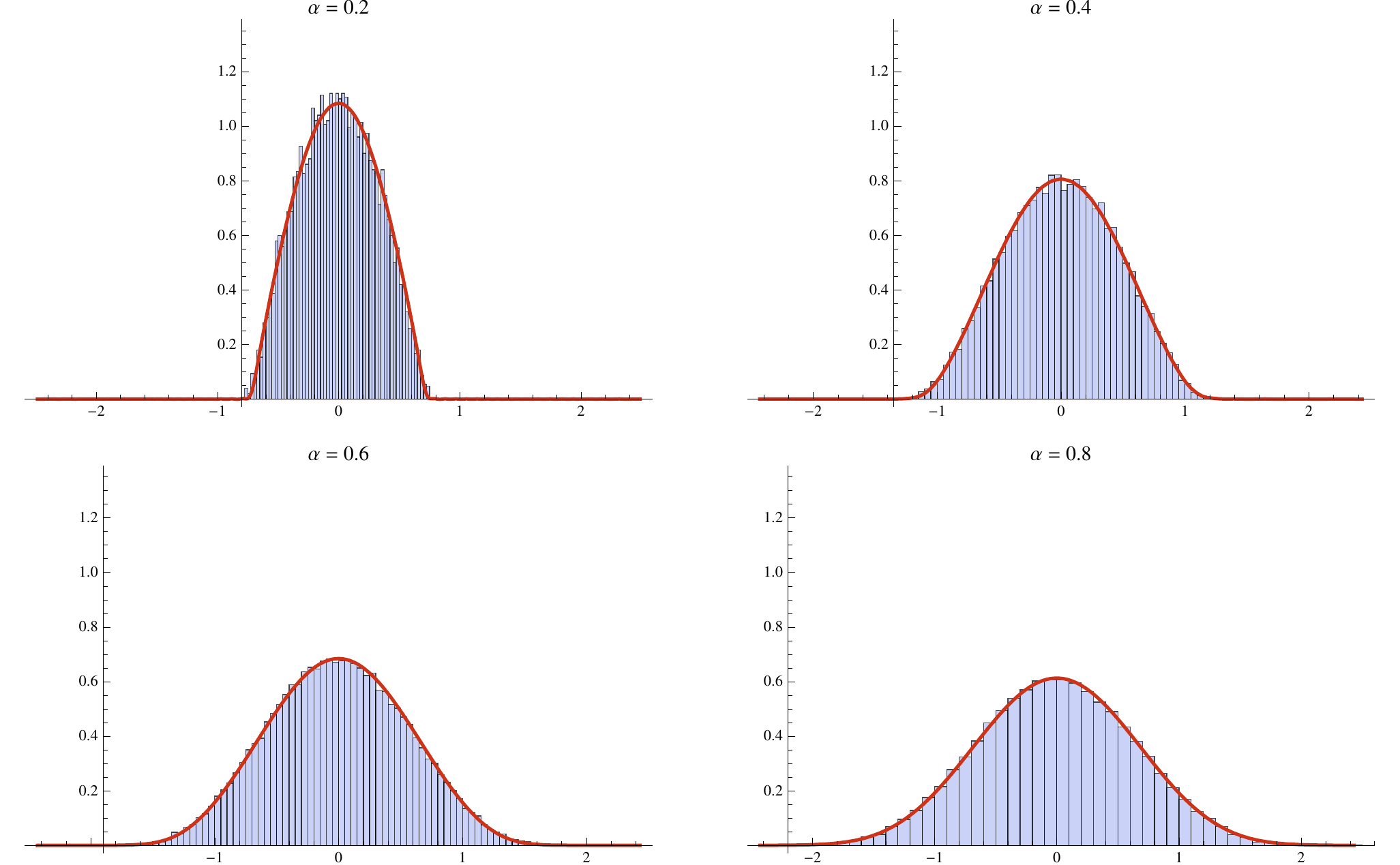}
  \end{center}
  \caption{Four compressions of a Gaussian. }
\label{GaussCompress}
\end{figure}

\section{Extensions}

We now identify some extensions of the proposed method:
\begin{itemize}
\item Numerical free convolution of measures supported on multiple intervals.  Here there are two issues that must be overcome: computation of the inverse Cauchy transform, and determination of the support of the measure.    The major complication is that the inverse Cauchy transform is multi-valued.
\item Free rectangular convolution (see \cite{benaych2009rectangular}).  This operation inherently requires computation with measures supported on multiple intervals.
\end{itemize}
A preliminary software implementation is available in {\sc Mathematica} as a component of {\sc RHPackage} \cite{RHPackage}.

\section*{Acknowledgements}
We thank Serban Belinschi for many insightful comments regarding the regularity properties of free convolution.  We thank Ben Adcock for suggesting the proof of  Lemma \ref{vandermondeconvergence}.  We thank Folkmar Bornemann for initiating this collaboration by pointing R.R.N to S.O's work \cite{SOHilbertTransform} in response to a query about whether free convolutions might be computable numerically. R.R.N's work was supported by an Office of Naval Research Young Investigator Award N00014-11-1-0660, ARO MURI W911NF-11-1-0391 and NSF award CCF-1116115.  We thank the anonymous referee for her or his helpful criticisms.  

\appendix

\section{Properties of the Cauchy transform and its inverse for invertible measures}\label{cauchyprops}

We describe properties of the special class of invertible measures that we use to justify the arguments below.  The proofs are by no means novel, however, we include them here as we are unaware of a convenient reference.  

\begin{propo} \label{cauchyconj}
	The Cauchy transform commutes with conjugation:
	$$G_\mu(\bar z) = \overline{G_\mu(z)}.$$

\end{propo}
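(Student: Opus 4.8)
The plan is to verify the identity directly from the definition of the Cauchy transform, exploiting the two facts that distinguish $G_\mu$ from a generic analytic function: the measure $\mu$ is real-valued, and it is supported on the real axis. First I would write out the right-hand side,
$$\overline{G_\mu(z)} = \overline{\int \f{\ud\mu(x)}{z-x}},$$
valid for any $z \notin \supp\mu$, and then push the complex conjugation through the integral.

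The justification for interchanging conjugation and integration is the only point requiring a word of care, so I would dispatch it next: since $z \notin \supp\mu$, the integrand $1/(z-x)$ is bounded (its modulus is at most $1/\operatorname{dist}(z,\supp\mu)$) and continuous on $\supp\mu$, hence integrable against the finite measure $\mu$; complex conjugation is a continuous real-linear map, so it commutes with the integral. This gives
$$\overline{G_\mu(z)} = \int \overline{\left(\f{1}{z-x}\right)}\,\ud\mu(x) = \int \f{\ud\mu(x)}{\bar z - \bar x},$$
where I have also used that $\ud\mu(x)$ is real so that $\overline{\ud\mu(x)} = \ud\mu(x)$.

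The final step is to observe that because $\mu$ is supported on the real line, every $x$ in the domain of integration satisfies $\bar x = x$, so $\bar z - \bar x = \bar z - x$ and the last integral is exactly $\int \ud\mu(x)/(\bar z - x) = G_\mu(\bar z)$. This completes the identification $\overline{G_\mu(z)} = G_\mu(\bar z)$. I expect no genuine obstacle here; the result is a routine Schwarz-reflection property of Cauchy transforms of real measures, and the entire content is the two elementary observations that $\mu$ is real and real-supported, with the integrability bound providing the only (trivial) technical justification.
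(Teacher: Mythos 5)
Your argument is correct and is essentially the paper's own proof: the paper's one-line justification is precisely that the measure is real and $\overline{1/(z-x)} = 1/(\bar z - x)$ for real $x$. Your version simply spells out the (routine) interchange of conjugation and integration, so there is nothing further to add.
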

\begin{proof}
	Follows since our measures are real and $$\overline{1 \over z - x } = {1 \over  \bar z - x}.$$
\end{proof}

\begin{propo} \label{realsign}
	The Cauchy transform satisfies
	$$\Im G_\mu(z) < 0 \hbox{ for } \Im z > 0$$
and 
	$$\Im G_\mu(z) > 0 \hbox{ for } \Im z < 0.$$
If $\mu$ is smoothly or square root decaying, then
	$$\Im G_\mu^-(z) > 0\hbox{ and }\Im G_\mu^+(z) < 0$$
for $z \in \supp \mu$.  

\end{propo}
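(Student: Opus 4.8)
The plan is to dispatch the two interior inequalities by a direct pointwise computation of the imaginary part of the Cauchy kernel, and then to obtain the boundary inequalities by combining Plemelj's lemma with the conjugation symmetry of Proposition~\ref{cauchyconj}.

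First I would write $z = u + i v$ with $v = \Im z \neq 0$ and compute the imaginary part of the integrand pointwise,
$$\Im {1 \over z - x} = {-v \over (u-x)^2 + v^2} = {-\Im z \over |z-x|^2}.$$
Integrating against $\ud\mu$, and noting that for $z \notin \supp\mu$ we have $|z-x| \geq |\Im z| > 0$ so the integral converges, gives
$$\Im G_\mu(z) = -\Im z \int {\ud\mu(x) \over |z-x|^2}.$$
Because $\mu$ is a probability measure the integral is a strictly positive finite number, so $\Im G_\mu(z)$ carries the opposite sign to $\Im z$. This yields $\Im G_\mu(z) < 0$ for $\Im z > 0$ and $\Im G_\mu(z) > 0$ for $\Im z < 0$ simultaneously.

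For the boundary values I would use that a smoothly or square root decaying measure is absolutely continuous, $\ud\mu = \rho(x)\,\ud x$ with $\rho$ its nonnegative density, which is regular enough for Plemelj's lemma to apply (in particular $\rho \in \mathrm{L}^p$, as in the remark following the statement of Plemelj's lemma). Plemelj's lemma then gives $G_\mu^+(x) - G_\mu^-(x) = -2\pi i\,\rho(x)$, while Proposition~\ref{cauchyconj} shows that approaching a real point from below is the complex conjugate of approaching it from above, so $G_\mu^-(x) = \overline{G_\mu^+(x)}$ and hence $G_\mu^+(x) - G_\mu^-(x) = 2 i\,\Im G_\mu^+(x)$. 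Equating the two expressions yields
$$\Im G_\mu^+(x) = -\pi\,\rho(x) \qquad\hbox{and}\qquad \Im G_\mu^-(x) = \pi\,\rho(x),$$
so that the strict inequalities $\Im G_\mu^+(x) < 0$ and $\Im G_\mu^-(x) > 0$ hold at every point of $\supp\mu$ where $\rho > 0$.

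The first half is essentially immediate, the only point of care being that $\mu$ has positive total mass, so the inequalities are strict rather than weak. The main subtlety lies in the boundary statement: one must justify that the boundary limits $G_\mu^\pm$ exist and that Plemelj's lemma applies to the densities of both admissible classes, which is exactly the regularity (and the $\mathrm{L}^p$ extension) recorded around the statement of Plemelj's lemma, and one must read the strict inequality as holding on the interior of $\supp\mu$, since for a square root decaying measure $\rho$ vanishes at the endpoints and $\Im G_\mu^\pm$ vanishes there as well.
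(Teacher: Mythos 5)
Your proposal is correct and follows essentially the same route as the paper: the interior inequalities come from the pointwise computation $\Im\bigl(1/(z-x)\bigr) = -\Im z/|z-x|^2$, and the boundary inequalities come from Plemelj's lemma (the paper writes $G_\mu^\pm(x) = \mp\pi i\,\psi(x) + \dashint \ud\mu(t)/(x-t)$ with the principal value real, which is equivalent to your jump-plus-conjugation derivation). Your closing remark that the strict inequality must be read on the interior of $\supp\mu$, since the density of a square root decaying measure vanishes at the endpoints, is a fair refinement that the paper's proof passes over silently.
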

\begin{proof}
	For $\Im z > 0$, we have
	$$\Im \int {d \mu \over z - x} = \int \Im\left[{1 \over z - x} \right] d \mu = - \Im z \int  {  d \mu  \over |z- x|^2} < 0.$$
The second part of the theorem follows since invertible measures can be written as $d \mu = \psi(x) d x$ where $\psi$ is H\"older continuous, hence Plemelj's lemma holds:
	$$G_\mu^\pm(z) = \mp \pi i \psi(z) + \dashint {d \mu(x) \over z-x}.$$

\end{proof}

\begin{propo}\label{monotonic}
	Suppose $\mu$ has compact support inside the interval $(a,b)$.  The Cauchy transform is monotonically decreasing for $z > b$ and $z < a$.  Assuming the Cauchy transform is single-valued, then $G_\mu^{-1}$ is  monotonically decreasing inside $(G_\mu(a), G_\mu(b))$.  

\end{propo}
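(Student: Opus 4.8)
The plan is to establish monotonicity of $G_\mu$ on the real axis outside $\supp\mu$ by differentiating under the integral sign, and then transfer this to $G_\mu^{-1}$ via the inverse function theorem. First I would observe that for real $z > b$, the integrand $1/(z-x)$ is real and smooth in $z$ uniformly for $x \in \supp\mu \subset (a,b)$, so differentiation under the integral is justified by dominated convergence (the support being compact and bounded away from $z$). This gives
$$G_\mu'(z) = \int \frac{\partial}{\partial z}\frac{\ud\mu(x)}{z-x} = -\int \frac{\ud\mu(x)}{(z-x)^2}.$$
For $z > b$ we have $z - x > 0$ for all $x \in \supp\mu$, so $(z-x)^2 > 0$, whence $G_\mu'(z) < 0$ (strictly, since $\mu$ is a nontrivial positive measure). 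The identical argument applies for $z < a$, where $(z-x)^2 > 0$ as well. Thus $G_\mu$ is strictly decreasing on each of the two real intervals $(-\infty, a)$ and $(b, \infty)$.

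Next I would pin down the behaviour at the endpoints and at infinity to identify the image intervals correctly. As $z \to +\infty$, $G_\mu(z) = 1/z + O(z^{-2}) \to 0^+$, and as $z \to b^+$ the Cauchy transform increases toward its limiting value $G_\mu(b) = \xi_b^{\mu}$ (finite or $+\infty$ depending on the measure's behaviour at $b$). By strict monotonicity, $G_\mu$ maps $(b,\infty)$ bijectively onto the interval $(0, G_\mu(b))$ — with the orientation reversed. Similarly on $(-\infty,a)$, $G_\mu$ decreases from $G_\mu(a)$ down toward $0^-$ as $z \to -\infty$.

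For the statement about $G_\mu^{-1}$, I would invoke the inverse function theorem: since $G_\mu$ is single-valued (by hypothesis) and strictly monotone with nonvanishing derivative on the relevant real interval, its inverse exists and is differentiable, with
$$(G_\mu^{-1})'(y) = \frac{1}{G_\mu'(G_\mu^{-1}(y))} < 0,$$
the sign following because $G_\mu' < 0$ everywhere on the domain. Hence $G_\mu^{-1}$ is monotonically decreasing on $(G_\mu(a), G_\mu(b))$. The main obstacle I anticipate is purely bookkeeping rather than conceptual: one must be careful about the orientation of the image intervals and the precise meaning of $(G_\mu(a),G_\mu(b))$ when $a$ or $b$ is an endpoint where the Cauchy transform may blow up (e.g.\ square-root singularities versus finite limits), and about matching the two branches $z<a$ and $z>b$ to their respective image intervals $(G_\mu(a),0)$ and $(0,G_\mu(b))$. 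Once the orientation is fixed, the monotonicity of the inverse is immediate from the chain rule.
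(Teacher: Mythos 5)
Your proof is correct and follows essentially the same route as the paper: differentiation under the integral sign gives $G_\mu'(z)=-\int (z-x)^{-2}\,\ud\mu(x)<0$ off the support, and the statement about $G_\mu^{-1}$ then follows from the chain rule. The paper's proof is just the one-line differentiation; your additional bookkeeping about endpoint behaviour and image intervals is a harmless elaboration.
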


\begin{proof}
	Follows from differentiation:
	$${d \over dz} \int {d \mu(x) \over x - z} =\int {d \mu(x) \over (x - z)^2} > 0$$
for real $z$.

\end{proof}

\begin{propo}\label{dominance}
	Suppose $\mu$ is invertible.  Then
	$$\Im {1 \over G_\mu(z)} < \Im z$$
for $z \in \supp \mu$ and $\Im z < 0$.  Hence,
	$$ \Im {1 \over w} < \Im G_{\mu}^{-1}(w)$$
for $w \in G_\mu^-(\supp \mu)$ and $\Im w > 0$.  
\end{propo}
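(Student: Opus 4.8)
The plan is to establish the first inequality throughout the open lower half plane $\C^-$ by a direct Cauchy--Schwarz estimate, and then to recover the boundary statement on $\supp\mu$ and the ``Hence'' claim by continuity together with the substitution $w = G_\mu(z)$. I read the hypothesis ``$z \in \supp\mu$ and $\Im z < 0$'' as allowing $z$ to range over $\C^-$ and, in the limit, over the boundary values $G_\mu^-$ of the Cauchy transform on the support.

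First I would fix $z = s + i t$ with $t = \Im z < 0$ and set $a = a(x) := s - x$, so that $z - x = a + i t$ and
$$G_\mu(z) = \int \frac{\ud\mu(x)}{a + i t} = P - i t R, \qquad P := \int \frac{a\,\ud\mu(x)}{a^2 + t^2}, \quad R := \int \frac{\ud\mu(x)}{a^2 + t^2}.$$
Here $R > 0$ and $\Im G_\mu(z) = -t R > 0$, in agreement with Proposition~\ref{realsign}. A one-line computation gives $\Im \frac{1}{G_\mu(z)} = \frac{t R}{P^2 + t^2 R^2}$, so the desired bound $\Im \frac{1}{G_\mu(z)} < t$ is, upon dividing by $t < 0$ and reversing the inequality, equivalent to $R > P^2 + t^2 R^2$.

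The crux is the elementary identity
$$\int \frac{a^2}{a^2 + t^2}\,\ud\mu(x) = \int \left(1 - \frac{t^2}{a^2 + t^2}\right) \ud\mu(x) = 1 - t^2 R,$$
valid because $\mu$ is a probability measure. Writing $\frac{a}{a^2 + t^2} = \frac{a}{\sqrt{a^2 + t^2}} \cdot \frac{1}{\sqrt{a^2 + t^2}}$ and applying Cauchy--Schwarz with respect to $\mu$ then gives
$$P^2 \le \left(\int \frac{a^2}{a^2 + t^2}\,\ud\mu\right)\left(\int \frac{\ud\mu}{a^2 + t^2}\right) = (1 - t^2 R)\, R = R - t^2 R^2,$$
that is $P^2 + t^2 R^2 \le R$, which is exactly the required estimate. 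Equality in Cauchy--Schwarz would force $a = s - x$ to be constant $\mu$-almost everywhere, i.e.\ $\mu$ a point mass; since an invertible measure is absolutely continuous, this does not occur and the inequality is strict, proving the claim on $\C^-$.

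It remains to treat the boundary and the ``Hence.'' For $x \in \supp\mu$ the limit $G_\mu^-(x)$ has $\Im G_\mu^-(x) > 0$ by Proposition~\ref{realsign}, whence $\Im \frac{1}{G_\mu^-(x)} < 0 = \Im x$, so the inequality extends to the support by continuity. Substituting $w = G_\mu(z)$, so that $z = G_\mu^{-1}(w)$ and $\Im w > 0$ (again Proposition~\ref{realsign}, since $G_\mu$ maps $\C^-$ into $\C^+$), turns $\Im \frac{1}{G_\mu(z)} < \Im z$ into $\Im \frac{1}{w} < \Im G_\mu^{-1}(w)$; as $z$ reaches $\supp\mu$ the image point $w$ traverses $G_\mu^-(\supp\mu)$, giving the stated form. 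I expect the only genuine subtlety to be spotting the probability-measure identity above, which is precisely what makes Cauchy--Schwarz close the gap exactly; everything else is bookkeeping of real and imaginary parts together with the already-established sign facts from Proposition~\ref{realsign}.
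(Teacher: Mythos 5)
Your proof is correct, but it takes a genuinely different route from the paper's. The paper observes that $h(z) = \Im\bigl(\tfrac{1}{G_\mu(z)} - z\bigr)$ is harmonic in the lower half plane, checks that its boundary values are strictly negative on $\supp\mu$ and zero elsewhere on the real axis, and invokes the maximum principle for harmonic functions to conclude $h < 0$ throughout $\C^-$; the substitution $z = G_\mu^{-1}(w)$ then gives the second claim exactly as you do. Your argument replaces the maximum principle with a pointwise Cauchy--Schwarz computation: the identity $\int \frac{a^2}{a^2+t^2}\,\ud\mu = 1 - t^2 R$ (which uses that $\mu$ is a probability measure) closes the estimate $P^2 + t^2R^2 \le R$ exactly, and the equality case of Cauchy--Schwarz identifies precisely when strictness could fail (only for a point mass, excluded by invertibility). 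What your version buys is that it is entirely elementary and self-contained: it avoids the issue, glossed over in the paper, of justifying the maximum principle on an unbounded domain (one must also check $h(z) \to 0$ as $z \to \infty$, via $1/G_\mu(z) = z - E[x] + O(1/z)$), and it actually establishes the stronger standard fact that $\Im\bigl(1/G_\mu(z)\bigr) \le \Im z$ on $\C^-$ for \emph{any} probability measure, with strictness unless $\mu$ is a point mass. What the paper's version buys is brevity and the global structural statement that $\tfrac{1}{G_\mu(z)} - z$ is itself a Pick-type function, which is the form in which this fact is usually quoted in the free probability literature. Both handle the boundary and the ``Hence'' identically.
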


\def\R{{\mathbb R}}

\begin{proof}
	$$\Im\left({1 \over G_\mu(z)}  - z\right)$$
is harmonic.  On the real axis, we have
	$$\Im\left({1 \over G_\mu^-(z)} -z \right) = \Im {1 \over G_\mu^-(z)} < 0$$
for $z \in \supp \mu$, while it is zero elsewhere.  From the modulus maximization property of harmonic functions, it must be the case that
	$$\Im\left({1 \over G_\mu(z)}  - z\right) < 0$$
for $\Im z < 0$.  Letting $z = G_\mu^{-1}(w)$ shows the second result.  

\end{proof}

%\begin{propo}\label{realdominance}
%	Suppose $\mu$ is invertible and has compact support, and define the first moment
%%
%	$$E[x] = \int x d \mu.$$
%%
%  Then
%%
%	$$  G_\mu(z) >  {1 \over z - E[x]}$$
%%
%for $z > \max \supp \mu$.  Hence,
%%
%	$$ {1 \over w} + E[x] < G_{\mu}^{-1}(w)$$
%%
%for $w \in (0,G_\mu^\pm(\max \supp \mu))$.
%\end{propo}
%
%\def\R{{\mathbb R}}
%
%
%\begin{proof}
%Assume that 
%%
%	$$E[x] = \int x d \mu = 0.$$
%%
%We have
%%
%	$$G_\mu(z) - {1 \over z} = \int{1 \over z - x} d\mu - {1 \over z} \int d \mu = \int {x \over z (z - x)} d \mu.$$
%%
%As $E[x] = 0$, we have
%%
%	\begin{align*}
%		G_\mu(z) - {1 \over z} & = \int {x \over z (z - x)} d \mu -   {1 \over z^2} \int x d \mu \\
%		& = \int  {x^2 \over z^2 (z - x)} d \mu.
%		\end{align*}
%%
%This is positive for $x > b$, where $\supp \mu = (a,b)$.  
% Letting $z = G_\mu^{-1}(w)$ shows the second result.  
%
%\end{proof}
%

\begin{propo}\label{derjensen}
	Suppose $\mu$ is invertible and has compact support.  Then
	$$  G_\mu'(z) + G_\mu(z)^2 < 0$$
for $z \geq \max \supp \mu$.  Hence,
	$${G_{\mu}^{-1}}'(w) + {1 \over w^2} > 0$$
for $w \in (0,G_\mu^\pm(\max \supp \mu)]$.
\end{propo}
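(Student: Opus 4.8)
The plan is to recognize the first inequality as a disguised Cauchy--Schwarz inequality.  Differentiating under the integral sign gives $G_\mu'(z) = -\int \frac{\ud\mu(x)}{(z-x)^2}$, so the claim $G_\mu'(z) + G_\mu(z)^2 < 0$ is equivalent to
$$\left(\int \frac{\ud\mu(x)}{z-x}\right)^2 < \int \frac{\ud\mu(x)}{(z-x)^2}.$$
Taking $f(x) = 1/(z-x)$ and using that $\mu$ is a probability measure, Cauchy--Schwarz yields $\left(\int f\,\ud\mu\right)^2 \le \left(\int f^2\,\ud\mu\right)\left(\int 1\,\ud\mu\right) = \int f^2\,\ud\mu$, which is precisely the desired inequality in its non-strict form.

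First I would establish strictness.  Equality in Cauchy--Schwarz forces $f$ to be constant $\mu$-almost everywhere; since an invertible measure with compact support is precisely square-root decaying and hence supported on a nondegenerate interval $(a,b)$, the map $x \mapsto 1/(z-x)$ is genuinely non-constant there, ruling out equality for every $z > b = \max\supp\mu$.  At the endpoint $z = b$ itself the argument degenerates, but favorably: near $x = b$ the density behaves like $\sqrt{b-x}$, so $\int \frac{\ud\mu}{(b-x)^2}$ diverges while $G_\mu(b)^2$ stays finite, giving $G_\mu'(b) = -\infty$ and hence $G_\mu'(b) + G_\mu(b)^2 = -\infty < 0$ trivially.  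This settles the first inequality for all $z \ge b$.

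For the ``Hence'' statement I would pass to the inverse through ${G_\mu^{-1}}'(w) = 1/G_\mu'(z)$ with $w = G_\mu(z)$.  For real $z \ge b$ one has $G_\mu(z) > 0$ (the integrand $1/(z-x)$ is positive there) and $G_\mu'(z) < 0$, so
$${G_\mu^{-1}}'(w) + \frac{1}{w^2} = \frac{1}{G_\mu'(z)} + \frac{1}{G_\mu(z)^2} = \frac{G_\mu'(z) + G_\mu(z)^2}{G_\mu'(z)\, G_\mu(z)^2}.$$
Here the numerator is negative by the first part and the denominator is negative (a negative number times a positive one), so the quotient is positive.  To match the stated range of $w$, I would invoke Proposition~\ref{monotonic}: $G_\mu$ decreases monotonically on $(b,\infty)$ from the boundary value $G_\mu^\pm(\max\supp\mu)$ down to $0$ as $z \to \infty$, so as $z$ runs over $[b,\infty)$ the image $w = G_\mu(z)$ sweeps out exactly $(0, G_\mu^\pm(\max\supp\mu)]$.

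The main obstacle will be handling strictness and the endpoint simultaneously: the clean Cauchy--Schwarz bound is only non-strict, so one must separately argue that the support is a nondegenerate interval (so that equality is impossible for $z > b$) and then cover the endpoint $z = b$ by the divergence of the second-moment integral rather than by the inequality itself.
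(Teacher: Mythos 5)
Your proof is correct and takes essentially the same route as the paper, which applies Jensen's inequality $\left(\int f\,\ud\mu\right)^2 < \int f^2\,\ud\mu$ --- the same bound you derive from Cauchy--Schwarz against the constant function --- and then substitutes $z = G_\mu^{-1}(w)$. Your additional care over strictness and the endpoint $z = \max\supp\mu$ (where $G_\mu'$ diverges for a precisely square-root decaying measure) fills in details the paper's two-line proof leaves implicit.
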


\begin{proof}

From Jensen's inequality, we have
	$$G_\mu(z)^2 = \left[\int {d \mu \over z - x}\right]^2 < \int {d \mu \over (z - x)^2} = - G_\mu'(z).$$
The second inequality follows from substituting $z = G_\mu^{-1}(w)$ and 
	$${G_\mu^{-1}}'(w) = {1 \over G'(G_\mu^{-1}(w))}.$$

\end{proof}

\begin{propo}\label{sqrtturning}
 
	If $\mu$ is a Jacobi measure and $\beta > 0$, then 
	$$G_\mu(z) \sim G_\mu(b) + C (z - b)^\beta + o(z - b)^\beta.$$
If $\mu$ is precisely a Jacobi measure and $\beta > 0$, then $C \neq 0$ and
	$$G_\mu^{-1}(w) \sim  b + C (G_\mu(b) - w)^{1/\beta} + o(G_\mu(b) - w)^{1/\beta}.$$
If $\mu$ is precisely square root decaying then $G_{\mu}^{-1}$ is analytic at $G_\mu(b)$, with a quadratic turning point.  

If $\mu$ is precisely a Jacobi measure and $\beta \leq 0$, then 
	$$G_\mu(b) = \infty.$$

Similar properties hold near $G_\mu(a)$.  

\end{propo}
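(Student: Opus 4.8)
The plan is to reduce the global statement to a \emph{local model integral} near the endpoint $b$ and to extract its singular part by a scaling substitution. Write $d\mu = h(x)(b-x)^\beta\,dx$ with $h(x) = \psi(x)(x-a)^\alpha$, so that $h$ is $C^1$ in a neighborhood of $b$ and $h(b) = \psi(b)(b-a)^\alpha$. First I would localize: fix a smooth cutoff $\chi$ equal to $1$ near $b$ and supported in $(b-\delta,b]$, and split $G_\mu(z)$ into the near-endpoint piece $\int \chi(x)h(x)(b-x)^\beta/(z-x)\,dx$ and a remainder whose integrand is supported away from $b$. The remainder is analytic at $z=b$ and so contributes only to the regular part, including the value $G_\mu(b)$ (absolutely convergent for $\beta>0$, since $(b-x)^{\beta-1}$ is then integrable). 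Replacing $h(x)$ by $h(b)$ in the near-endpoint piece produces an error with density $O((b-x)^{\beta+1})$, whose Cauchy transform is $o((z-b)^\beta)$; thus the leading singular behaviour is carried by $h(b)\int \chi(x)(b-x)^\beta/(z-x)\,dx$.

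Next I would compute the model integral. Substituting $t=b-x$ and $\zeta=z-b$, the core is $\int_0^L t^\beta/(\zeta+t)\,dt$ (for $z$ off $[a,b]$, $\zeta$ lies in a slit neighbourhood of $0$, and $(z-b)^\beta$ is taken with the principal branch). Rescaling $t=\zeta s$ gives $\zeta^\beta\int_0^{L/\zeta} s^\beta/(1+s)\,ds$. Writing $s^\beta/(1+s)=s^{\beta-1}-s^{\beta-1}/(1+s)$ (and subtracting further terms of the large-$s$ expansion when $\beta\ge 1$) separates an algebraic-in-$\zeta$ part, which recombines with $G_\mu(b)$ and higher integer powers, from a constant equal to the analytic continuation $\pi/\sin\pi\beta$ (the reflection formula, obtained from the convergent integral $\int_0^\infty s^{\beta-1}/(1+s)\,ds$ for $0<\beta<1$ and by regularization otherwise). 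This yields $G_\mu(z)=G_\mu(b)+C(z-b)^\beta+o((z-b)^\beta)$ with $C=-h(b)\,\pi/\sin\pi\beta$, proving part (1); and when $\psi(b)\neq 0$ and $\beta$ is non-integer we get $C\neq 0$, which is the precisely-Jacobi case. The inverse expansion in part (2) then follows by inverting the leading-order relation $w-G_\mu(b)=C(z-b)^\beta(1+o(1))$: since $C\neq0$ and $\beta>0$, the map $z\mapsto G_\mu(z)$ is a homeomorphism of a small sector onto a sector, and $G_\mu^{-1}(w)\sim b+C'(G_\mu(b)-w)^{1/\beta}$ with the branch fixed by continuity from the physical sheet.

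For part (3), specialize to $\alpha=\beta=\tfrac12$. The singular powers are then $(z-b)^{1/2},(z-b)^{3/2},\dots$, i.e.\ odd powers of $s:=\sqrt{z-b}$, while the regular part supplies even powers of $s$; hence $G_\mu$ has an expansion in integer powers of $s$ with nonzero linear coefficient $c_1=C$ (using analyticity of $\psi$ near $b$, as holds for the invertible measures in question). Inverting the analytic map $s\mapsto G_\mu$ gives $s$ as an analytic function of $w-G_\mu(b)$ vanishing linearly, so $z-b=s^2$ is analytic in $w$ and vanishes quadratically; thus $G_\mu^{-1}$ is analytic at $G_\mu(b)$ with a first-order (quadratic) turning point. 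Part (4) falls out of the same model computation: for $-1<\beta<0$ the factor $\zeta^\beta\to\infty$ while $\int_0^\infty s^\beta/(1+s)\,ds=-\pi/\sin\pi\beta\neq0$, and for $\beta=0$ the integral is $\log((L+\zeta)/\zeta)\to\infty$; since $h(b)=\psi(b)(b-a)^\alpha\neq0$ for a precisely Jacobi measure, $G_\mu(z)\to\infty$, i.e.\ $G_\mu(b)=\infty$. The statements near $a$ follow by the reflection $x\mapsto a+b-x$, which swaps the roles of $\alpha$ and $\beta$.

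The main obstacle I anticipate is making the local analysis rigorous for \emph{complex} $z\to b$: one must show the model expansion holds uniformly in a full slit neighbourhood of $\zeta=0$ with the correct principal branch of $\zeta^\beta$, and that the errors from the cutoff and from the $C^1$ remainder $h(x)-h(b)$ are genuinely $o((z-b)^\beta)$ uniformly in the approach direction. This uniformity is exactly what is needed to legitimately invert the asymptotic relation in parts (2)--(3); once it is in hand, the remaining steps amount to the classical inversion of a leading fractional power. These endpoint expansions are also classical in the theory of Cauchy-type integrals with power singularities (cf.\ Muskhelishvili or Gakhov), which could be cited in lieu of the self-contained computation.
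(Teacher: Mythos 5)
Your proposal is correct and its skeleton coincides with the paper's: both localize near $b$, freeze the smooth factor at its endpoint value (your $h(x)-h(b)$ subtraction is exactly the paper's $\psi(x)(x-a)^\alpha-\psi(b)(b-a)^\alpha$ split), and reduce everything to the model integral $\int (b-x)^\beta\,dx/(z-x)$. Where you differ is in how that model integral's singular part is identified. The paper argues via Plemelj: any function with jump $(b-x)^\beta(\E^{\I\beta\pi}-\E^{-\I\beta\pi})$ across the cut must equal $(z-b)^\beta/(\E^{\I\beta\pi}-\E^{-\I\beta\pi})$ plus an analytic remainder, so the coefficient is read off from the branch discontinuity without any integration. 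You instead compute the integral directly by the scaling $t=\zeta s$ and the reflection formula, which buys you the explicit constant $C=-h(b)\,\pi/\sin\pi\beta$ and a cleaner route to part (4) (the paper disposes of $\beta\le 0$ in one line via non-integrability of $(b-x)^{\beta-1}$, which is equivalent but terser). You also carry out the inversion in parts (2)--(3) explicitly (the sector-mapping argument and the substitution $s=\sqrt{z-b}$), where the paper simply writes ``the rest of the theorem now follows''; your version is the more complete account. Two shared caveats, neither fatal: both arguments really establish $C\neq 0$ only for non-integer $\beta$ (at integer $\beta$ the $(z-b)^\beta$ term is swallowed by the analytic part and a logarithm appears instead --- you flag this, the paper does not), and your part (3) assumes $\psi$ analytic near $b$ to get a full integer-power expansion in $s$, which goes beyond the $C^1$ regularity in Definition \ref{JacobiMeasureDef}; this matches how the proposition is actually used (the invertible measures arising in Theorem \ref{sqrtgamma} have analytic densities), but is worth stating as a hypothesis.
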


\begin{proof}
	The case $\beta \leq 0$ follows since $(b - x)^{\beta - 1}$ is not integrable.  In the case $\beta > 0$, we subdivide $(a,b)$ into $(a,b_0)$ and $(b_0,b)$.  In the latter interval,  we write the Cauchy transform as
	\begin{align*}
		\int_{b_0}^b {d \mu \over z - x} &= \int_{b_0}^b {\psi(x) (x-a)^\alpha - \psi(b)  (b-a)^\alpha    \over z - x} (b-x)^\beta  dx \cr
		&\qquad\qquad+ \psi(b) (b-a)^\alpha \int_{b_0}^b {(b - x)^\beta dx \over z - x}.
		\end{align*}
From Plemelj's lemma, it follows that
	$$\int_{b_0}^b {(b - x)^\beta dx \over z - x} \sim (z - b)^\beta + \hbox{analytic}.$$
To see this for $\beta$ not an integer, represent the Cauchy transform as ${(z - b)^\beta \over \E^{\I \beta \pi} - \E^{-\I \beta \pi}}  + \hbox{analytic}$ in a circle surrounding $b$.  This satisfies the right jump, since
	\begin{align*}
		(z-b)_+^\beta - (z-b)_-^\beta & = \E^{\beta \log_+ (z-b)}  - \E^{\beta \log_-(z-b)} \cr
		&= \E^{\beta \log (b-x) + \I \beta \pi} -  \E^{\beta \log (b-x) - \I \beta \pi} = (b-x)^\beta (\E^{\I \beta \pi} - \E^{-\I \beta \pi}).
	\end{align*}
The rest of the theorem now follows.  
%Holder continuity means we can bound the first integral by 
%%
%	$$\int_{b_0}^b {(b - x)^{\beta + \lambda} \over z - x} dx \sim (b-x)^{\beta + \lambda} = o(b-x)^\beta.$$
%
	
%	:  if $\abs{\psi(x) - \psi(y)} \leq C \abs{x - y}^\lambda$ then
%%
%	$\abs{\psi(x) (b-x)^\beta - \psi(y) (b-y)^\beta} \leq (b-x)^\beta \abs{\psi(x) - \psy(y)} + \psi(y) \abs{(b-x)^\beta - (b-y)^\beta} \leq 
%%
\end{proof}

\begin{propo}\label{smoothdecaydecay}
	Let $\mu$ be a Schwartz measure.  Then
	$$G_\mu(z) = {1 \over z} + {E[x] \over z^2} + {E[x^2] \over z^3} + \cdots$$
as $z \rightarrow \infty$, where 
	$$E[x^k] = \int x^k d\mu$$
are the moments.  Therefore, $G_\mu^{-1}$ has a real asymptotic expansion at zero:
	$$G_\mu^{-1}(y) \sim {1 \over y} + c_0 + c_1 z+ \cdots$$
as $y \rightarrow 0$ for $y \in G_\mu(\C)$ and $c_k$ real.

\end{propo}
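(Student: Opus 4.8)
The plan is to establish the moment expansion of $G_\mu$ first and then obtain the inverse expansion by reverting it order-by-order.

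First I would prove the expansion of $G_\mu$ directly from its defining integral. Using the finite geometric identity
$$\frac{1}{z-x} = \sum_{k=0}^{N}\frac{x^k}{z^{k+1}} + \frac{x^{N+1}}{z^{N+1}(z-x)},$$
I would integrate against $d\mu$ and recognise the first $N+1$ terms as $\sum_{k=0}^N E[x^k]\, z^{-k-1}$, with $E[x^0]=\int d\mu = 1$ supplying the leading term $1/z$. The remainder
$$R_N(z) = \int\frac{x^{N+1}}{z^{N+1}(z-x)}\,d\mu(x)$$
is bounded by $|z|^{-(N+1)}|\Im z|^{-1}\int|x|^{N+1}\,d\mu$, using $|z-x|\geq|\Im z|$. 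The Schwartz hypothesis guarantees $\int|x|^{N+1}\,d\mu<\infty$ for every $N$, so, as $z\to\infty$ away from the real axis (where $G_\mu$ is single-valued), applying this bound at order $N+1$ shows $R_N = o(z^{-(N+1)})$; hence the series is a genuine asymptotic expansion of $G_\mu$ and not merely a formal one.

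Next, for the inverse expansion I would set $w = 1/z$, so that Part 1 reads $y = G_\mu(z) = w + E[x]\,w^2 + E[x^2]\,w^3 + \cdots$ asymptotically as $w\to 0$, a series with derivative $1$ at $w=0$ and real coefficients. Since $G_\mu(z)\to 0$ and $G_\mu'(z)\sim -z^{-2}\neq 0$ for large $z$, the relation $y=G_\mu(z)$ is locally invertible near infinity, and $y\to 0$ within $G_\mu(\C)$ forces $z\to\infty$. I would then insert the ansatz
$$G_\mu^{-1}(y) = \frac{1}{y} + c_0 + c_1 y + c_2 y^2 + \cdots$$
into $y = G_\mu(G_\mu^{-1}(y))$ and match powers of $y$, solving recursively: the bootstrap gives $z\sim 1/y$, then $1/z = y - E[x]\,y^2 + O(y^3)$ yields $c_0 = E[x]$, and so on. Each $c_k$ is determined by a polynomial recursion in the real moments $E[x^j]$, so every $c_k$ is real, which is the claimed conclusion.

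The main obstacle is justifying that this formal reversion produces a \emph{true} asymptotic expansion of $G_\mu^{-1}$: for Schwartz measures on the whole line the moment series of $G_\mu$ is generically divergent and $G_\mu$ is not analytic at infinity, so one cannot simply invoke the analytic inverse function theorem. I would instead make the bootstrap rigorous by carrying the finite remainder $R_N$ from Part 1 through the substitution, showing inductively that truncating $G_\mu^{-1}$ at order $y^{N}$ leaves an error $o(y^{N})$. A secondary, minor point is to confirm that, along the path by which $y\to 0$ in $G_\mu(\C)$, the point $z=G_\mu^{-1}(y)$ stays in a region where $|\Im z|$ is bounded below, so that the estimate $|z-x|\geq|\Im z|$ remains effective; this is controlled by Proposition~\ref{realsign}, which governs the sign and size of $\Im G_\mu$.
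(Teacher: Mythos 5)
Your proposal is correct and follows essentially the same route as the paper: expand the Cauchy kernel as a finite geometric series with an integral remainder (controlled by the finiteness of all moments under the Schwartz hypothesis), then obtain the inverse expansion by reversion of the asymptotic series. The paper's own proof is just a two-line version of this; your elaboration of the remainder bound and of why the formal reversion is a genuine asymptotic expansion fills in details the paper leaves implicit.
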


\begin{proof}
The first part follows by replacing the Cauchy kernel with it's geometric series:
	$$G_\mu(z) = \int {d \mu \over z - x} =\sum_{k = 0}^n {1 \over z^{k+1}} \int x^k d \mu + {1 \over z^{n+1}} \int {x^{n+1} \over x-z} d\mu$$
The second part follows from inversion of asymptotic expansions.  
\end{proof}

\begin{propo}\label{continuousboundary}
	Suppose $\mu$ is an admissible measure.  Then 
	$G_\mu(\supp \mu)$
is a (possibly unbounded) continuous curve.   The curve is bounded if $\mu$ is a square root or smoothly decaying measure.

\end{propo}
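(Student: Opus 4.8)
The plan is to present $G_\mu(\supp\mu)$ as the image of the support under the limiting boundary-value map $x \mapsto G_\mu^+(x)$ (the companion curve $G_\mu^-(\supp\mu)$ following by conjugation, Proposition~\ref{cauchyconj}), and to prove continuity of this map class by class, reducing the delicate endpoint behaviour to Proposition~\ref{sqrtturning}.

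First I would treat the interior of the support. Writing $d\mu = \rho(x)\,dx$ on the absolutely continuous part, Plemelj's lemma gives
$$G_\mu^\pm(x) = \mp \pi i\,\rho(x) + \dashint \frac{\rho(y)}{x-y}\,dy$$
for $x$ in the interior of $\supp\mu$. The algebraic term is continuous because in every admissible case the smooth factor $\psi$ (and hence $\rho$ away from the endpoints) is $C^1$; the principal-value term is the finite Hilbert transform of $\rho$, and since $\psi \in C^1$ with $\psi'$ of bounded variation makes $\rho$ H\"older continuous on compact subsets of the interior, the classical Plemelj/Privalov regularity theory shows it is continuous there. This establishes continuity on the interior of each supporting interval, and on all of $\mathbb{R}$ in the smoothly decaying case.

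Next I would dispose of the endpoints and the point at infinity, where the curve may escape to $\infty$. At a finite endpoint of a Jacobi, square root, or half-square-root support, Proposition~\ref{sqrtturning} describes the local behaviour: for positive exponent $G_\mu$ extends continuously to a finite value, while for nonpositive exponent $G_\mu(b) = \infty$; in either case $x \mapsto G_\mu^+(x)$ extends continuously as a map into the Riemann sphere, so the curve does not break. At $\pm\infty$, for smoothly decaying and half-square-root/smoothly decaying measures, the hypothesis $\psi(x) = \alpha/x + O(x^{-2})$ forces $G_\mu^\pm(x) \to 0$, so both ends of the curve close up at the origin. For a point measure $G_\mu(z) = \sum_i c_i/(z-\lambda_i)$ is explicit and blows up at each atom, so its support ``curve'' is the excursion through $\infty$; for a finite compactly supported combination I would concatenate the pieces, the atoms and non-integrable endpoints being exactly the junctions where consecutive arcs meet at $\infty$ on the sphere. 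In every case this produces a (possibly unbounded) continuous curve.

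Finally, boundedness in the two named cases is immediate from the above. For a square root decaying measure the smooth factor is bounded and the exponent equals $1/2 > 0$ at both endpoints, so the positive-exponent clause of Proposition~\ref{sqrtturning} gives finite endpoint values $G_\mu(a), G_\mu(b)$; together with interior continuity the curve is a bounded arc. For a smoothly decaying measure the density is bounded, its Hilbert transform is bounded, and $G_\mu^\pm \to 0$ at $\pm\infty$, so the curve is bounded. I expect the main obstacle to be the endpoint analysis of the principal-value integral for general Jacobi exponents, where the weight $(x-a)^\alpha(b-x)^\beta$ either degenerates or blows up; but this is precisely the content of Proposition~\ref{sqrtturning} and its counterpart at $a$, so the work reduces to invoking that local expansion rather than re-deriving it.
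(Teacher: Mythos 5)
Your proof is correct, and it reaches the same conclusion by a partly different route. On the Jacobi case the two arguments essentially coincide: you both invoke Plemelj's lemma in the interior and reduce the endpoint behaviour to the local expansion of the Cauchy transform of the weight (you via Proposition~\ref{sqrtturning}, the paper via the same subtraction trick $\rho(y) = (\rho(y)-\rho(x)) + \rho(x)$ followed by Elliott's closed form for $\dashint (x-a)^\alpha(b-x)^\beta/(z-x)\,dx$, which is exactly what the proof of Proposition~\ref{sqrtturning} rests on). Where you genuinely diverge is in the square-root and smoothly decaying cases: the paper dispatches both at once by observing that the series representations of Table~\ref{tab:rep of measures} converge uniformly on the closed disk, with the M\"obius map $u(x)=(i-x)/(i+x)$ sending $\pm\infty$ to a single boundary point, so continuity \emph{and} boundedness (including closure of the curve at the origin) come for free; you instead run a real-variable argument (local H\"older continuity of the density plus Plemelj/Privalov in the interior, the decay hypothesis $\psi(x)=\alpha/x+O(x^{-2})$ at infinity). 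Your route is more elementary and more uniform across the classes, but the one place it is softer is the assertion that the Hilbert transform of a smoothly decaying density is bounded and tends to $0$ at $\pm\infty$ --- true under the stated hypotheses, but it requires a short estimate, whereas the paper's uniform-convergence argument makes boundedness immediate. Neither argument is fully careful about Jacobi measures with $\psi$ vanishing at an endpoint (the paper waves this away with ``without loss of generality''), so you inherit no extra gap there.
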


\begin{proof}
	For square root and smoothly decaying measures, uniform convergence  of the series representations  of the Cauchy transforms (see Table~\ref{tab:rep of measures})  implies continuity and boundedness.  The proposition is trivial for point measures.

	 For Jacobi measures, without loss of generality we can assume $\psi(a) \neq 0$ and $\psi(b) \neq 0$.  We have that 
	$$G_\mu(z) = \int {(x - a)^\alpha (b - x)^\beta \psi(x) \over z - x} dx$$
is continuous, and for $\beta \leq 0$ it blows up as $ z \rightarrow b$ from the right, otherwise, it approaches a limit.  For $a < z < b$ we have from Plemelj's lemma:
	\begin{align*}
		G_{\mu}^\pm(z) &= \mp \I \pi {\psi(z)} +  \dashint {(x - a)^\alpha (b - x)^\beta \psi(x) \over z - x} dx \\
					&=\mp \I \pi {\psi(z)} +  \int {(x - a)^\alpha (b - x)^\beta (\psi(x) - \psi(z)) \over z - x} dx  + \psi(z) \dashint {(x - a)^\alpha (b - x)^\beta \over z - x} dx
	\end{align*}
The differentiability of  $\psi$  ensures the  continuity of the first integral.  The latter principal value integral can be expressed in closed form \cite{elliott71}, and for $\beta \leq 0$ it blows up as $z \rightarrow b$ from the left, otherwise, it approaches the same limit as from the right.   Similar logic proves continuity near $a$.

%Smoothness of the measures implies Plemelj's lemma applies: writing
%%
%	$d\mu = \psi(t) dt$
%%
%we have
%	$$G_{\mu}^\pm(x) = \mp \I \pi {\psi_A(x)} +  \dashint {d \mu \over x - t}.$$
%%

\end{proof}

\section{Convergence of Vandermonde systems with large number of points}

	The following proofs are straightforward (we thank Ben Adcock for help proving them), though we have not found them in precisely this form in the literature.  In this section, the norm is always  ${\rm L}^2$ (on the unit circle), $\ell^2$ or the matrix norm induced by $\ell^2$.  

\begin{propo}
	Let $\mathbf d_m = (d_1,\ldots,d_m)$ be a point cloud that covers the unit disk as $m \rightarrow \infty$, and
	 $$V = \begin{pmatrix}
	 		 1 & d_1 & \cdots & d_1^{n-1} \cr
			\vdots & \vdots & \ddots & \vdots \cr
			1 & d_m & \cdots & d_m^{n-1} \cr
		\end{pmatrix},$$
the $m \times n$ Vandermonde matrix associated with the point cloud.  Then for any  $n$, there exists $m$ large enough so that
	$\|V^+\| \leq  \sqrt{n} + \delta$, where $V^+$ denotes the Moore--Penrose pseudoinverse of $V$.
Furthermore, if, for all $|z|\leq 1$ and $\epsilon > 0$, the smallest $m$ such that
	$\min(|z - \mathbf d_m|) \leq \epsilon$
satisfies ${m} = O(\epsilon^{-\alpha})$ for some $\alpha > 0$,  then $m = O(n^{\alpha})$.
\end{propo}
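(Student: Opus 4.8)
The plan is to recast the claim as a lower bound on the smallest singular value of $V$. Assuming the points are rich enough that $V$ has full column rank, $\|V^+\| = 1/\sigma_{\min}(V)$, and identifying a unit coefficient vector $\mathbf{c} = (c_0,\dots,c_{n-1})^\top$ with the polynomial $p_{\mathbf{c}}(z) = \sum_{k=0}^{n-1} c_k z^k$ gives the variational form
$$\sigma_{\min}(V) = \min_{\|\mathbf{c}\|_2 = 1} \|V\mathbf{c}\|_2 = \min_{\|\mathbf{c}\|_2 = 1} \Bigl(\sum_{j=1}^m |p_{\mathbf{c}}(d_j)|^2\Bigr)^{1/2}.$$
Thus $\|V^+\| \le \sqrt{n} + \delta$ is equivalent to the uniform sampling bound $\sum_{j=1}^m |p_{\mathbf{c}}(d_j)|^2 \ge (\sqrt{n}+\delta)^{-2}$ over all $\mathbf{c}$ with $\|\mathbf{c}\|_2 = 1$, which is what I would establish.

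First I would exhibit, for each such $\mathbf{c}$, a point where $p_{\mathbf{c}}$ is guaranteed to be large. Cauchy's coefficient estimate gives $|c_k| \le \max_{|z|=1}|p_{\mathbf{c}}(z)| =: M_{\mathbf{c}}$ for every $k$, so $1 = \|\mathbf{c}\|_2 \le \sqrt{n}\,\|\mathbf{c}\|_\infty \le \sqrt{n}\,M_{\mathbf{c}}$, and there is a point $z^\ast$ with $|z^\ast| = 1$ and $|p_{\mathbf{c}}(z^\ast)| = M_{\mathbf{c}} \ge 1/\sqrt{n}$. The heart of the argument is to transfer this to one of the sample points, uniformly in $\mathbf{c}$. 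For this I would use Bernstein's inequality $\max_{|z|\le 1}|p_{\mathbf{c}}'(z)| \le (n-1)M_{\mathbf{c}}$, which bounds the decay of $|p_{\mathbf{c}}|$ away from $z^\ast$ \emph{relative to} $M_{\mathbf{c}}$. If some $d_j$ lies within distance $\rho$ of $z^\ast$, then
$$|p_{\mathbf{c}}(d_j)| \ge M_{\mathbf{c}}\bigl(1 - (n-1)\rho\bigr) \ge \tfrac{1}{\sqrt{n}}\bigl(1 - (n-1)\rho\bigr),$$
and elementary algebra shows that $\rho \le \delta\,[(n-1)(\sqrt{n}+\delta)]^{-1}$ already forces $|p_{\mathbf{c}}(d_j)| \ge (\sqrt{n}+\delta)^{-1}$. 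Since the single term $|p_{\mathbf{c}}(d_j)|^2$ bounds the whole sum from below and the admissible $\rho$ depends only on $n$ and $\delta$, the covering hypothesis supplies such a $d_j$ once $m$ is large, giving the uniform bound $\sigma_{\min}(V) \ge (\sqrt{n}+\delta)^{-1}$ and hence the first assertion.

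For the quantitative statement I would insert the admissible covering radius into the rate hypothesis: since $\rho$ must be taken of order $\delta\, n^{-3/2}$, the smallest $m$ realizing $\min_j |z - d_j| \le \rho$ is $O(\rho^{-\alpha}) = O(n^{3\alpha/2})$, a polynomial bound in $n$. The main obstacle is precisely this exponent: a single-point transfer through Bernstein's inequality loses a factor of $\sqrt{n}$ (one from $M_{\mathbf{c}} \ge 1/\sqrt{n}$ being possibly tight, one from needing relative accuracy $\sim \delta/\sqrt{n}$ in the sampled value), so it yields $O(n^{3\alpha/2})$ rather than the sharper claimed $O(n^\alpha)$. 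Recovering the stated exponent would require a more global estimate — for instance exploiting the exact identity $\frac{1}{2\pi}\int_{|z|=1}|p_{\mathbf{c}}|^2 = 1$ and summing the contributions of \emph{all} sample points in a shrinking neighbourhood of the circle, rather than relying on a single nearby node. Ensuring uniformity of the estimate over the compact unit sphere of coefficient vectors, via the equicontinuity furnished by Bernstein's inequality, is the other point requiring care.
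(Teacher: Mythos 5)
Your treatment of the first assertion is correct, and it takes a genuinely different route from the paper. The paper extracts, for $m$ large, an $n\times n$ square submatrix $V_g$ of $V$ whose rows correspond to points within $1/n$ of the $n$-th roots of unity, bounds $\|V\mathbf c\|\ge\|V_g\mathbf c\|$, and treats $V_g$ as a perturbation of the DFT matrix $V_u$ (whose singular values are all $\sqrt n$). You instead work with a single extremal point: Cauchy's coefficient estimate gives $\max_{|z|=1}|p_{\mathbf c}(z)|\ge 1/\sqrt n$ for unit $\mathbf c$, and Bernstein's inequality transfers this to a sample point within $\rho=\delta[(n-1)(\sqrt n+\delta)]^{-1}$ of the maximizer. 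Your chain of inequalities checks out and yields exactly the stated bound $\sigma_{\min}(V)\ge(\sqrt n+\delta)^{-1}$, uniformly over the unit sphere of coefficient vectors since $\rho$ depends only on $n$ and $\delta$. If anything your argument is the more carefully quantified of the two: the paper's perturbation step asserts $V_g=V_u+\tfrac1n\Delta$ with $\|\Delta\|\le1$ and concludes $\|V_g\mathbf c\|=\|V_u\mathbf c\|+O(1/n)$, which is not justified as written (entrywise, $|d_j^k-u_j^k|\le k|d_j-u_j|$ can be as large as $1$ when $|d_j-u_j|\sim 1/n$, so the crude norm bound on $\Delta$ is of order $n$, not $1$).

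The genuine gap is in the second assertion, and you have correctly diagnosed it yourself: your single-point Bernstein transfer requires covering radius $\rho\sim\delta\,n^{-3/2}$, so the rate hypothesis only delivers $m=O(n^{3\alpha/2})$, not the claimed $m=O(n^{\alpha})$. The paper obtains the exponent $\alpha$ because its construction only asks for one sample within $\epsilon=1/n$ of each of $n$ prescribed targets, so $m=O(\epsilon^{-\alpha})=O(n^{\alpha})$; the loss of the extra factor $n^{\alpha/2}$ in your argument comes precisely from demanding \emph{relative} accuracy $\delta/\sqrt n$ at a point where $|p_{\mathbf c}|$ may be as small as $1/\sqrt n$. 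To recover $O(n^\alpha)$ along your lines you would need the global estimate you sketch at the end --- e.g.\ using $\frac{1}{2\pi}\int_{|z|=1}|p_{\mathbf c}|^2\,|dz|=1$ and summing $|p_{\mathbf c}(d_j)|^2$ over all nodes within $O(1/n)$ of the circle, so that the $n$ terms collectively reproduce a Riemann sum of the boundary integral --- which is, in substance, what the paper's square-submatrix argument is doing. As it stands, your proof establishes the norm bound but proves a weaker version of the quantitative growth claim.
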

\begin{proof}
	Assuming that $V^+$ has full column rank (which will follow from the argument below for large $m$),
	$$\| V^+ \| =  {1 \over \sigma_{\rm min}} = {1 \over  \inf_{\mathbf c \in \C^n, \|c\| = 1}  \|V {\mathbf c} \|}.$$
where $\sigma_{\rm min}$ is the smallest singular value.    For $m$ large enough, there exist $n$ points within ${1 \over n}$ of $n$ evenly spaced points $\mathbf u_n$.  (Under the secondary hypothesis, this $m$ clearly grows like $O(n^{\alpha})$.) Let $ V_g$ be the $n \times n$ Vandermonde matrix associated with these points, so that (under a certain ordering)
	$$V = \begin{pmatrix} V_g \cr V_b\end{pmatrix}.$$
 Then
	$$\| V \mathbf c \| = \left\| \begin{pmatrix}  V_g \mathbf c \cr V_b \mathbf c  \end{pmatrix}  \right\| \geq \| V_g \mathbf c \|.$$

We have
	$$V_g = V_u + {1 \over n} { \Delta},$$
where $V_u$ is the Vandermonde matrix associated with $\mathbf u_n$ (i.e., a discrete Fourier transform) and $\|\Delta\| \leq 1$.  Thus
 $\| V_g \mathbf c\| = \| V_u \mathbf c\| + O({1 \over n})$.  We know  that
	$$\inf_{\mathbf c \in \C^n, \|c\| = 1} \| V_u \mathbf c\| = {1 \over \| V_u^{-1} \|} = {1 \over \sqrt{n}}$$
which completes the proof.

\end{proof}

\begin{lem}\label{vandermondeconvergence}
	Suppose that, for all $|z|\leq 1$ and $\epsilon > 0$, the smallest $m$ such that
	$\min(|z - \mathbf d_m|) \leq \epsilon$
satisfies ${ m} = O(\epsilon^{-\alpha})$ for some $\alpha > 0$.
If $f$ is analytic in the unit disk, then for $m$ large enough the least squares approximation of $f$ at the points $\mathbf d_m$ converges to $f$ in ${\rm L}^2$.

\end{lem}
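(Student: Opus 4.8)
The plan is to compare the least squares polynomial against the truncated Taylor series of $f$, and to control the difference of their coefficient vectors by means of the pseudoinverse bound from the preceding proposition. Write $f(z) = \sum_{k=0}^\infty a_k z^k$ for the Taylor expansion of $f$, let $\mathbf a = (a_0,\ldots,a_{n-1})^\top$ collect its first $n$ coefficients, and let $f_n(z) = \sum_{k=0}^{n-1} a_k z^k$ be the corresponding truncation. Denote by $\mathbf f = (f(d_1),\ldots,f(d_m))^\top$ the sampled data and by $\mathbf c = V^+ \mathbf f$ the (minimum-norm) least squares coefficients, so that the least squares approximant is $p_n(z) = \sum_{k=0}^{n-1} c_k z^k$. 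Since $\{z^k\}_{k\ge 0}$ is orthogonal on the unit circle, the map sending a coefficient vector to its polynomial is, up to the factor $\sqrt{2\pi}$, an ${\rm L}^2$ isometry; it therefore suffices to establish $\|\mathbf c - \mathbf a\| \to 0$ together with $\|f_n - f\| \to 0$, after which the triangle inequality finishes the argument.

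First I would fix $n$ and, invoking the preceding proposition together with the covering-rate hypothesis, choose $m = O(n^\alpha)$ large enough that $V$ has full column rank and $\|V^+\| \le \sqrt n + \delta$. Full column rank gives $V^+ V = I$, whence the key identity
$$ \mathbf c - \mathbf a = V^+ \mathbf f - V^+ V \mathbf a = V^+(\mathbf f - V \mathbf a). $$
The $j$-th entry of $\mathbf f - V\mathbf a$ is exactly the Taylor remainder $(f - f_n)(d_j)$ evaluated at the sample point $d_j$. Because $f$ is analytic on the closed disk, its coefficients decay geometrically, $|a_k| \le C \rho^{-k}$ for some $\rho > 1$; since $|d_j| \le 1$, this yields the uniform bound $|(f-f_n)(d_j)| \le \sum_{k \ge n} |a_k| \le C' \rho^{-n}$, and hence $\|\mathbf f - V \mathbf a\| \le \sqrt m\, C' \rho^{-n}$.

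Combining these estimates will give
$$ \|\mathbf c - \mathbf a\| \le \|V^+\|\,\|\mathbf f - V\mathbf a\| \le (\sqrt n + \delta)\sqrt m\, C' \rho^{-n} = O\!\left(n^{(1+\alpha)/2} \rho^{-n}\right), $$
which tends to zero as $n \to \infty$ because geometric decay dominates any polynomial growth. Passing back through the ${\rm L}^2$ isometry then yields $\|p_n - f_n\| \to 0$, while $\|f_n - f\| \to 0$ since the Taylor series of an analytic function converges in ${\rm L}^2$ on the circle; the triangle inequality delivers $\|p_n - f\| \to 0$, as desired.

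The main obstacle is the joint control of the two competing amplifications: the factor $\sqrt n$ coming from $\|V^+\|$ and the factor $\sqrt m$ arising from summing the sampling error over all $m$ points. This is precisely where both hypotheses become indispensable — the covering-rate assumption forces $m$ to grow only polynomially ($m = O(n^\alpha)$), while the analyticity of $f$ supplies the geometric decay $\rho^{-n}$ that overwhelms this polynomial growth. Without analyticity (for instance, $f$ merely analytic inside the open disk with only algebraically decaying coefficients), the product $(\sqrt n + \delta)\sqrt m$ need no longer be beaten by the sampling error, and the argument would break down; so I would emphasize that geometric coefficient decay is the essential quantitative input.
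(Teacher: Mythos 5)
Your proof is correct and follows essentially the same route as the paper's: both exploit the exact reproduction of degree-$<n$ polynomials by the least squares projection to reduce the error to $V^+$ applied to the sampled truncation remainder, and then balance the at-most-algebraic growth of $\|V^+\|\sqrt{m} = O(n^{(1+\alpha)/2})$ against the exponential decay of that remainder for analytic $f$. The paper phrases this with projection and sampling operators, writing $f - \tilde f = (I - P_n^\top V^{+} E_{m})(f - P_n^\top P_n f)$, rather than with explicit Taylor coefficient vectors, but the content is identical.
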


\begin{proof}
	Let
 	$$P_n=  (I_n, {\mathbf 0})$$ denote the $n \times \infty$ projection operator and let $E_{m}$ be the $m \times \infty$ operator defined by
	$$E_{m} f = f(\mathbf d_m).$$
Then we are approximating $f$ by
	 $$\tilde f = P_n^\top V^+ E_{m} f.$$
Furthermore,
	$$P_{n} f=  V^+ E_{m} P_n^\top P_n f.$$

	 We thus have the error
	\begin{align*}
		f - \tilde f =  f -  P_n^\top P_n f + P_n^\top V^{+} E_{m} ( P_n^\top P_n f - f) \cr
		 = (I -   P_n^\top V^{+} E_{m}) (f -  P_n^\top P_n f ).
		\end{align*}
In other words,
	$$\|f - \tilde f\| \leq (1+ m(n^{1/2} + \epsilon)) \|f - P_n^\top P_n f\|.$$
$\|f - P_n^\top P_n f\|$ decays exponentially fast for any analytic $f$.  The theorem follows since $m$  grows at most algebraically with $n$.

\end{proof}

We need to modify the preceding lemma for the least squares system used in \algref{smoothlydecaying}, which is not quite Vandermonde:

\begin{cor}\label{vandermondeconvergencevanish}
	Suppose $f$ is analytic inside the unit disk, smooth on the boundary and satisfies
	$$f(-1) = 0.$$
Then for $m$ large enough the least squares approximation
	$$\sum_{k=1}^m \psi_k (d_j^k - (-1)^{k}) \approx f(d_j)$$
converges to $f$ in ${\rm L}^2$.  
\end{cor}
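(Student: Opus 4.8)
The plan is to reduce this to Lemma~\ref{vandermondeconvergence} by tracking the two features that distinguish the present fit from a pure Vandermonde one: the basis functions $\Phi_k(d) := d^k - (-1)^k$ are not orthonormal on the unit circle, and they all vanish at $d=-1$. The second feature is exactly why the hypothesis $f(-1)=0$ is imposed, and it is also what makes $f$ representable in this basis. Writing the Taylor expansion $f(d) = \sum_{k=0}^\infty a_k d^k$, the constraint $f(-1) = \sum_k a_k (-1)^k = 0$ lets us subtract a vanishing term to obtain the exact expansion $f = \sum_{k=1}^\infty a_k \Phi_k$, with coefficients equal to the Taylor coefficients. First I would fix the natural truncated approximant $p_n := \sum_{k=1}^n a_k \Phi_k = S_n - S_n(-1)$, where $S_n$ is the degree-$n$ Taylor partial sum and $n$ denotes the truncation order (with $m$ the number of sample points, as in Lemma~\ref{vandermondeconvergence}). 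Since $\|f - S_n\| \to 0$ and $S_n(-1) \to f(-1) = 0$, both super-algebraically fast for analytic $f$, the error $\|f - p_n\|$ in ${\rm L}^2$ and in the sup norm on the closed disk decays faster than any power of $n$.

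Next I would run the operator argument of Lemma~\ref{vandermondeconvergence} essentially verbatim, with the pure Vandermonde matrix $V$ replaced by the modified design matrix $\tilde V = [\Phi_k(d_j)]_{j,k}$ and with $P_n^\top P_n f$ replaced by $p_n$. Writing $\tilde{\mathbf c} = \tilde V^+ (f(\mathbf d_m))$ for the least-squares coefficients and $\tilde f = \sum_{k=1}^n \tilde c_k \Phi_k$ for the reconstruction, exactness of least squares on the column span of $\tilde V$ gives $P_n \mathbf a - \tilde{\mathbf c} = -\tilde V^+\bigl((f-p_n)(\mathbf d_m)\bigr)$, so the coefficient error is bounded by $\|\tilde V^+\|\,\sqrt m\,\|f-p_n\|_\infty$. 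Adding the tail contribution $\|f-p_n\|$ then yields a bound of the form $\|f-\tilde f\| \le \|f - p_n\| + \sqrt{1+n}\,\|\tilde V^+\|\,\sqrt m\,\|f-p_n\|_\infty$, where the factor $\sqrt{1+n}$ records the passage between coefficient and function norms (see below). Since $m = O(n^\alpha)$ under the standing density hypothesis, every polynomial-in-$(m,n)$ prefactor is overwhelmed by the super-algebraic decay of $\|f-p_n\|$, giving convergence as $n\to\infty$.

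The one genuinely new estimate, and the main obstacle, is a bound on $\|\tilde V^+\|$ of the same quality as the bound $\|V^+\|\le\sqrt n+\delta$ furnished by the Proposition preceding Lemma~\ref{vandermondeconvergence}. I would obtain it by factoring $\tilde V = \hat V B$, where $\hat V = [d_j^k]_{j,\,0\le k\le n}$ is the ordinary $m\times(n+1)$ Vandermonde and $B$ is the $(n+1)\times n$ matrix with columns $e_k - (-1)^k e_0$ for $k=1,\dots,n$. Each column of $B$ carries a distinct unit vector $e_k$, so $B$ has full column rank and $\sigma_{\rm min}(\tilde V) \ge \sigma_{\rm min}(\hat V)\,\sigma_{\rm min}(B)$, whence $\|\tilde V^+\| \le \|\hat V^+\|\,\|B^+\|$. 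A direct computation gives $B^* B = I_n + \mathbf w\mathbf w^\top$ with $w_k = (-1)^k$, whose eigenvalues are $1$ (multiplicity $n-1$) and $1+n$; thus $\sigma_{\rm min}(B)=1$ and $\|B^+\| = 1$, so the Proposition yields $\|\tilde V^+\| \le \|\hat V^+\| \le \sqrt{n+1}+\delta$. Finally, the very same matrix $I_n + \mathbf w\mathbf w^\top$ is the Gram matrix of the $\Phi_k$ on the unit circle, so the ${\rm L}^2$ norm of any reconstruction $\sum_k c_k \Phi_k$ lies between $\|\mathbf c\|$ and $\sqrt{1+n}\,\|\mathbf c\|$; this is the only place the non-orthonormality enters, and it costs at most the harmless factor $\sqrt{1+n}$ used in the second paragraph.
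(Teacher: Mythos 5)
Your proof is correct and reaches the same endpoint as the paper's --- a bound of order $\sqrt{n+1}$ on the pseudoinverse of the modified design matrix, combined with the observation that the super-algebraic decay of the truncation error $\|f-p_n\|$ overwhelms all polynomial prefactors in $m$ and $n$ --- but the mechanism for the key conditioning estimate is different. The paper argues at the level of interpolation problems: since every basis function $z^k-(-1)^k$ vanishes at $z=-1$, interpolating $f$ at the $n$ points obtained by deleting $-1$ from $\mathbf u_{n+1}$ in this basis automatically interpolates $f$ at $-1$ as well, so the problem is equivalent to ordinary degree-$n$ interpolation at all $n+1$ roots of unity, and the $\sqrt{n+1}$ bound is inherited from the DFT matrix. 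You instead factor the design matrix as $\tilde V=\hat V B$ with $\hat V$ the ordinary $m\times(n+1)$ Vandermonde, compute $B^*B=I+ww^\top$ to get $\sigma_{\min}(B)=1$, and conclude $\|\tilde V^+\|\le\|\hat V^+\|$ directly from the preceding Proposition. Your route buys two things the paper's terse proof leaves implicit: a purely linear-algebraic justification of the conditioning bound that does not require re-running the root-of-unity perturbation argument in the modified basis, and an explicit accounting of the non-orthonormality of the functions $\Phi_k$ (the Gram matrix $I+ww^\top$ and the resulting factor of at most $\sqrt{1+n}$ between coefficient and ${\rm L}^2$ norms), a point the paper does not address at all. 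The paper's version is shorter and makes more vivid why $f(-1)=0$ is exactly the right hypothesis; yours is the more airtight write-up of the same underlying reduction to Lemma~\ref{vandermondeconvergence}.
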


\begin{proof}
The logic of the preceding proofs still follow. To see this, define the $n$ points $ \mathbf{\tilde u}_n$ as the points $\mathbf u_{n+1}$ with the point $-1$ removed.  Interpolating $f$ at $ \mathbf{\tilde u}_n$ by $(z+1,z^2-1,\ldots,z^n-(-1)^n)$ will also interpolate $f$ at $-1$, hence it is equivalent to interpolating $f$ at $\mathbf u_{n+1}$.  Thus the norm of the inverse of the relevant interpolation matrix at $\mathbf u_n$ is  bounded by $\sqrt{n + 1}$.  The truncation error
	$$\|{f - P_n^\top P_n f}\|$$
now decays only super-algebraically fast, but that is sufficient for convergence.  

\end{proof}

\end{document}